\newtheorem{theorem}{Theorem}[section]
\newtheorem{lemma}[theorem]{Lemma}
\newtheorem{corollary}[theorem]{Corollary}
\newtheorem{question}[theorem]{Question}
\theoremstyle{definition}
\newtheorem{definition}[theorem]{Definition}
\newtheorem{proposition}[theorem]{Proposition}
\theoremstyle{remark}
\newtheorem{remark}[theorem]{Remark}
\newtheorem{example}[theorem]{Example}
\begin{document}

\title
{Subgyrogroups within the product spaces of paratopological gyrogroups}

\author{Ying-Ying Jin}\thanks{}
\address{(Y.-Y. Jin) Department of General Required Courses, Guangzhou Panyu Polytechnic, Guangzhou 511483, P.R. China} \email{yingyjin@163.com, jinyy@gzpyp.edu.cn}
\author{Ye-Qing Sheng}\thanks{}
\address{(Y.Q. Sheng) School of Mathematics and Computational Science, Wuyi University, Jiangmen 529020, China} \email{sheng$_-$yq@126.com}
\author{Yi-Ting Wang}\thanks{}
\address{(Y.-T. Wang) School of Mathematics and Computational Science, Wuyi University, Jiangmen 529020, China} \email{2205616228@qq.com}
\author{Li-Hong Xie*}\thanks{* The corresponding author.}
\address{(L.-H. Xie) School of Mathematics and Computational Science, Wuyi University, Jiangmen 529020, P.R. China} \email{yunli198282@126.com}


\thanks{
This work is supported by the Natural Science Foundation of Guangdong
Province under Grant (Nos. 2021A1515010381; 2020A1515110458). The Innovation Project of Department of Education of Guangdong Province (No. 2022KTSCX145), and the Natural Science Project of Jiangmen City (No. 2021030102570004880), Scientific research project of Guangzhou Panyu Polytechnic  (No. 2022KJ02)}

\subjclass[2010]{primary 54B10; secondary 54H99, 54D45, 54D20, 	54B15, 54C10}

\keywords{Paratopological gyrogroup; Projectively first-countable; Weakly Hausdorff number; 2-pseudocompact}

\begin{abstract} We present a characterization of paratopological gyrogroups that can be topologically embedded as subgyrogroups into a product of first-countable $T_{i}$ paratopological gyrogroups for $i =
0, 1, 2$. Specifically, we demonstrate that
a strongly paratopological gyrogroup
$G$ is topologically isomorphic to a subgyrogroup of a topological product
of first-countable $T_1$ strongly paratopological gyrogroups if and only if $G$ is $T_1$, $\omega$-balanced
and the weakly Hausdorff number of $G$ is countable.
Similarly, we prove that
a strongly paratopological gyrogroup
$G$ is topologically isomorphic to a subgyrogroup of a topological product
of first-countable Hausdorff strongly paratopological gyrogroups if and only if $G$ is Hausdorff, $\omega$-balanced
and the Hausdorff number of $G$ is countable.
\end{abstract}

\maketitle

\section{Introduction}
A gyrogroup, as outlined in Definition \ref{Def:gyr}, is essentially a structure similar to a group, but it notably lacks the associative property.
Research into gyrogroups, initiated in the early 1980s by Ungar \cite{Ung1}, originally formulated the idea of gyrovector spaces and subsequently expanded it to include the broader concept of a gyrogroup.
A {\it paratopological gyrogroup} is a gyrogroup endowed with a topology for which binary operation
in the gyrogroup is jointly continuous. If, additionally, the inversion in a paratopological gyrogroup is
continuous, then it is called a {\it topological gyrogroup} (see Definition \ref{def2.11}).

According to \cite{Tka}, given a topological property $\mathcal{P}$, we say that a paratopological (topological) gyrogroup $G$
is {\it projectively} $\mathcal{P}$ if for every neighborhood $U$ of the identity in $G$, there exists a continuous homomorphism
$p: G\rightarrow H$ onto a paratopological (topological) gyrogroup $H$ with property $\mathcal{P}$ such that $p^{-1}(V)\subseteq U$, for some neighborhood $V$ of the identity in $H$.

Tkachenko provided a detailed internal analysis of the projectively Hausdorff first-countable (second-countable) paratopological groups through the utilization of a cardinal invariant known as the {\it Hausdorff number}, as outlined in \cite[Theorems 2.7, 2.8]{Tka}. Furthermore, the characteristics of regular paratopological groups are extensively discussed in Theorems 3.6 and 3.8 \cite{Tka}:
A regular paratopological group $H$ can be topologically embedded as a subgroup into a product of regular first-countable
(second-countable) paratopological groups if and only if $H$ has countable index of regularity and is $\omega$-balanced (totally $\omega$-narrow).
This means that for every neighbourhood $U$ of the identity $e$ in $G$, one
can find a countable family $\gamma$ of neighbourhoods of $e$ such that for each $x\in G$ there exists $V\in\gamma$ satisfying
$xVx^{-1}\subseteq U$.
Such a family $\gamma$ is usually called subordinated to $U$.

S\'{a}nchez offered a characterization of subgroups within topological products of families of first-countable (second-countable) $T_i$ paratopological groups, where $i=0, 1$, by employing a cardinal invariant referred to as the symmetry number.
At the same time, the authors in \cite{XL} introduced a same concept known as weakly Hausdorff number.
Through \cite[Theorems 2.17, 2.19]{Sa}, they established that being $\omega$-balanced ( totally $\omega$-narrow) is a necessary and sufficient condition for a $T_0$ paratopological group to be projectively first-countable (second-countable) $T_0$. Furthermore, in \cite[Theorems 2.20, 2.22]{Sa}, it was demonstrated that a $T_1$ paratopological group $G$ can be topologically embedded as a subgroup in a product of $T_1$ first-countable (second-countable) paratopological groups if and only if $G$ is $\omega$-balanced (totally $\omega$-narrow) and maintains a countable symmetry number.

Motivated by the techniques used in \cite{Sa}, we characterize subgyrogroups of topological products of families
of first-countable $T_i$ paratopological gyrogroups for $i=0, 1, 2$. We prove in Theorem \ref{the3.26}
that
a strongly paratopological gyrocommutative gyrogroup
$G$ is projectively
first-countable if and only if $G$ is $T_0$ and $\omega$-balanced.
We demonstrate in Theorem \ref{the3.30s} that
a strongly paratopological gyrogroup
$G$ is topologically isomorphic to a subgyrogroup of a topological product
of first-countable $T_1$ strongly paratopological gyrogroups if and only if $G$ is $T_1$, $\omega$-balanced
and the weakly Hausdorff number of $G$ is countable. This means that for every neighborhood $U$ of the identity 0 in $G$, there exists a countable family $\gamma$ of neighborhoods of 0 such that for all $V \in\gamma$, $\bigcap_{V\in\gamma} (\ominus V)\subseteq U$.
We also show in Theorem \ref{the3.30} that
a strongly paratopological gyrogroup
$G$ is topologically isomorphic to a subgyrogroup of a topological product
of first-countable Hausdorff strongly paratopological gyrogroups if and only if $G$ is Hausdorff, $\omega$-balanced and $Hs(G)\leq\omega$.

There are many results on the question of when a paratopological (semitopological) group is in fact a topological group. For example,
according to Ellis' theorem in \cite{El}, every locally compact Hausdorff semitopological group is a topological group.
Romaguera and Sanchis \cite{RS} generalized the famous Numakura¡¯s theorem \cite{Nu} and showed that
every compact Hausdorff topological semigroup with two-sided cancellation is a topological group.
A conclusion drawn from this result in \cite{RS} is that every compact $T_0$ paratopological group is a topological group.

In 2014, S\'{a}nchez in \cite {Sa} and, independently, L. H. Xie and S. Lin in \cite{XL} proved that every 2-pseudocompact paratopological group $G$ is a topological group if and only if
it has a countable symmetry number. This characterization implies Ravsky's result in \cite{Ra1}.
In Theorem \ref{the3.15} we give necessary and sufficient conditions under which a 2-pseudocompact $T_1$ strongly paratopological gyrogroup turns out to be a strongly topological gyrogroup.

\section{Definitions and preliminaries}
This section contains necessary definitions and background of gyrogroups and paratopological (topological) gyrogroups.
Their basic algebraic and topological properties are included as well.
For notation and terminology not explicitly mentioned here, readers are encouraged to refer to \cite{Arha, En89}.
\subsection{The algebra of gyrogroups}

In this section, we give the relevant definitions, summarize elementary properties
of gyrogroups, gyrogroup homomorphisms, normal subgyrogroups,
and quotient gyrogroups.
Much of this section can be found in \cite{ST, Suk3, Ung1, Ung}.

Let $G$ be a nonempty set, and let $\oplus  : G  \times G \rightarrow G $ be a binary operation on $G $. Then the pair $(G, \oplus)$ is
called a {\it groupoid.}  A function $f$ from a groupoid $(G_1, \oplus_1)$ to a groupoid $(G_2, \oplus_2)$ is said to be
a groupoid homomorphism if $f(x_1\oplus_1 x_2)=f(x_1)\oplus_2 f(x_2)$ for any elements $x_1, x_2 \in G_1$.  In addition, a bijective
groupoid homomorphism from a groupoid $(G, \oplus)$ to itself will be called a groupoid automorphism. We will write $Aut (G, \oplus)$ for the set of all automorphisms of a groupoid $(G, \oplus)$.

The notion of a gyrogroup was first identified by Ungar during his research on Einstein's velocity addition \cite{Ung}.

\begin{definition}\cite[Gyrogroups]{Ung}\label{Def:gyr}
 Let $(G, \oplus)$ be a nonempty groupoid. We say that $(G, \oplus)$ or just $G$
(when it is clear from the context) is a gyrogroup if the followings hold:
\begin{enumerate}
\item[($G1$)] There is an identity element $0 \in G$ such that
$$0\oplus x=x=x\oplus 0\text{~~~~~for all~~}x\in G.$$
\item[($G2$)] For each $x \in G $, there exists an {\it inverse element}  $\ominus x \in G$ such that
$$\ominus x\oplus x=0=x\oplus(\ominus x).$$
\item[($G3$)] For any $x, y \in G $, there exists an {\it gyroautomorphism} $\text{gyr}[x, y] \in Aut(G,  \oplus)$ such that
$$x\oplus (y\oplus z)=(x\oplus y)\oplus \text{gyr}[x, y](z)$$ for all $z \in G$;
\item[($G4$)] For any $x, y \in G$, $\text{gyr}[x \oplus y, y]=\text{gyr}[x, y]$.
\end{enumerate}
\end{definition}

\begin{definition}\cite{Ung}\label{defbox}
Let $(G,\oplus)$ be a gyrogroup with gyrogroup operation (or,
addition) $\oplus$. The gyrogroup cooperation (or, coaddition) $\boxplus$ is a second
binary operation in $G$ given by the equation
$a\boxplus b=a\oplus \text{gyr}[a,\ominus b]b$ for all $a, b\in G$.
The groupoid $(G, \boxplus)$ is called a cogyrogroup, and is said to
be the cogyrogroup associated with the gyrogroup $(G, \oplus)$.

Replacing $b$ by $\ominus b$, we have the identity
$a\boxminus b=a\ominus \text{gyr}[a,b]b$ for all $a, b\in G$, where we use the obvious notation, $a\boxminus b = a\boxplus(\ominus b)$.
\end{definition}

Gyrogroups possess similar algebraic characteristics to groups, and numerous theorems from group theory have been adapted to gyrogroups through the utilization of gyroautomorphisms \cite{Ung1, Ung}.
The following Theorem \ref{the1.3} below summarizes some algebraic properties of gyrogroups.

\begin{theorem}\cite{Ung}\label{the1.3}
Let $(G, \oplus)$ be a gyrogroup. Then, for any $a, b, c\in G$ we have
\begin{enumerate}
\item[(1)] $(a\oplus b)\oplus c=a\oplus(b\oplus \text{gyr}[b, a]c);$\hfill{Right Gyroassociative Law}
\item[(2)] $a\oplus (b\oplus c)=(a\oplus b)\oplus \text{gyr}[a, b]c;$\hfill{Left Gyroassociative Law}
\item[(2)] $\text{gyr}[a, b]=\text{gyr}[a, b\oplus a];$\hfill{Right Loop Property}
\item[(2)] $\text{gyr}[a, b]=\text{gyr}[a\oplus b, b];$\hfill{Left Loop Property}
\item[(3)] $(\ominus a)\oplus(a\oplus b)= b$;
\item[(4)] $(a\ominus b)\boxplus b= a$;
\item[(5)] $(a\boxminus b)\oplus b= a$;
\item[(6)] $\text{gyr}[a, b](c)=\ominus(a\oplus b)\oplus (a\oplus (b\oplus c))$;
\item[(7)] $\ominus(a\oplus b)=\text{gyr}[a, b](\ominus b\ominus a)$;\hfill{Gyrosum Inversion}
\item[(8)] $\text{gyr}[a, b](\ominus x)=\ominus \text{gyr}[a, b]x$;
\item[(9)] $\text{gyr}^{-1}[a, b]=\text{gyr}[b, a]$; \hfill{Inversive symmetry}
\item[(10)] $\ominus(a\boxplus b)= (\ominus b)\boxplus(\ominus a)$; \hfill{The Cogyroautomorphic Inverse Theorem}
\item[(11)] $\text{gyr}[\ominus a, \ominus b]=\text{gyr}[a, b]$; \hfill{Even symmetry}
\item[(12)] $\text{gyr}[a, 0]=\text{gyr}[0, b]=\text{id}_G$.
\end{enumerate}
\end{theorem}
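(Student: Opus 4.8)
The identities collected in Theorem \ref{the1.3} form the standard algebraic toolkit of gyrogroups, so the plan is to derive each of them from the axioms (G1)--(G4) of Definition \ref{Def:gyr} together with Definition \ref{defbox}, being careful about the logical order so as to avoid circular reasoning. A handful of preliminary facts are needed first: (i) every right inverse of $a$ equals $\ominus a$, so inverses are two-sided and unique; (ii) $\ominus(\ominus a)=a$; and (iii) every groupoid automorphism $f$ of $(G,\oplus)$ fixes $0$, because $f(0)=f(0)\oplus f(0)$ and an idempotent $y=y\oplus y$ forces $y=0$ in any gyrogroup. Each of these becomes available the moment the left cancellation law holds, so the real first task is that law.

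The bootstrap runs as follows. From (G3) with $x=0$ one gets $b\oplus z=b\oplus\text{gyr}[0,b](z)$ for all $z$; applying $\ominus b\oplus(\cdot)$ to both sides and invoking (G3) again turns this into $\text{gyr}[\ominus b,b](z)=\text{gyr}[\ominus b,b]\big(\text{gyr}[0,b](z)\big)$, and since $\text{gyr}[\ominus b,b]$ is a bijection we conclude $\text{gyr}[0,b]=\text{id}_G$. Then (G4) with $x=\ominus a$, $y=a$ gives $\text{gyr}[\ominus a,a]=\text{gyr}[0,a]=\text{id}_G$, whence (G3) yields the left cancellation law $(\ominus a)\oplus(a\oplus b)=b$, item (3). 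The companion $\text{gyr}[a,0]=\text{id}_G$ of (12) follows by the same trick from (G3) with $y=0$, now that left cancellation is in hand, and the gyrator formula (6), $\text{gyr}[a,b](c)=\ominus(a\oplus b)\oplus(a\oplus(b\oplus c))$, is simply (G3) read through left cancellation. The preliminary facts (i)--(iii) now follow, and with them item (8): $\text{gyr}[a,b](\ominus x)\oplus\text{gyr}[a,b](x)=\text{gyr}[a,b](\ominus x\oplus x)=0$, so uniqueness of inverses gives $\text{gyr}[a,b](\ominus x)=\ominus\text{gyr}[a,b](x)$.

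With (3), (6), (8) and (12) secured, the remaining identities cascade. The right gyroassociative law (1) is equivalent, via (G3) and left cancellation, to the inversive symmetry $\text{gyr}[a,b]\circ\text{gyr}[b,a]=\text{id}_G$ of (9): substituting $\text{gyr}[b,a]c$ into (G3) and cancelling $(a\oplus b)$ on the left shows that each of (1) and (9) implies the other. The right loop property $\text{gyr}[a,b]=\text{gyr}[a,b\oplus a]$ then follows by applying inversive symmetry to the left loop property (G4) (which is itself item (2) after renaming variables). Gyrosum inversion (7) comes from the gyrator formula (6) evaluated at $c=\ominus b\ominus a$, since $a\oplus\big(b\oplus(\ominus b\ominus a)\big)=0$ by left cancellation; items (4) and (5) follow by expanding $\boxplus$ and $\boxminus$ through Definition \ref{defbox} and simplifying with (G3), the left loop property and left cancellation; and the cogyroautomorphic inverse theorem (10) together with the even symmetry $\text{gyr}[\ominus a,\ominus b]=\text{gyr}[a,b]$ of (11) are obtained by feeding (7), (8) and (9) into the definition of the cooperation and reducing.

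The step I expect to be the genuine obstacle is the inversive symmetry (9), equivalently the right gyroassociative law (1): unlike the others it is not a one-line substitution, and establishing it requires the more delicate argument combining the gyrator formula (6) with the loop property (G4). Once (9) is in place, every remaining item on the list is routine manipulation with the cancellation laws, so the proof reduces to that one hard identity plus bookkeeping.
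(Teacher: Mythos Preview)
The paper provides no proof of this theorem: it is cited from Ungar \cite{Ung} as a list of standard gyrogroup identities and used throughout the paper as a black box, so there is no argument here to compare your proposal against. Your outline follows the standard route in Ungar's treatment---bootstrap $\text{gyr}[0,b]=\text{id}_G$ and left cancellation first, read off the gyrator formula (6), then derive the rest---and you correctly flag inversive symmetry (9) as the one substantive step, though you stop short of actually supplying that derivation.
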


Generally, gyrogroups do not adhere to the associative property. However, they comply with the left and right gyroassociative laws, denoted as Proposition \ref{the1.3} (1) and (2), respectively.
Observe that a group is a type of gyrogroup $(G, \oplus)$ where \text{gyr}$[x, y]$ acts as the identity map for all $x, y$ in $G$. This indicates that gyrogroups serve as a natural extension of the concept of groups.
Specifically, gyrogroups that extend the principles of Abelian groups are assigned a distinct designation.
Similar to the classification in group theory, gyrogroups are categorized as either gyrocommutative or non-gyrocommutative.

\begin{definition}\cite[Gyrocommutative Gyrogroups]{Ung}\label{com}
A gyrogroup $(G,\oplus)$ is gyrocommutative if its binary operation obeys the gyrocommutative law
$$a\oplus b=\text{gyr}[a,b](b\oplus a) $$
for all $a, b\in G$.
\end{definition}
In fact, both the Einstein gyrogroup and the M\"{o}bius gyrogroup are examples of gyrocommutative gyrogroups, as detailed in Example \ref{ex13} and \ref{ex16}. Non-commutative groups are also referred to as non-gyrocommutative gyrogroups.
The subsequent Theorem \ref{the2.5com} encapsulates certain algebraic traits of gyrocommutative gyrogroups.

\begin{theorem}\cite{Ung}\label{the2.5com}
 Let $(G, \oplus)$ be a gyrocommutative gyrogroup. Then, for any $a, b, c\in G$ we have
\begin{enumerate}
\item[(1)] $\ominus(a\oplus b)=\ominus a\ominus b;$\hfill{Gyroautomorphic Inverse Property}
\item[(2)] $a\boxplus b=b\boxplus a;$
\item[(3)] $a\boxplus b=a\oplus((\ominus a\oplus b)\oplus a)$.
\end{enumerate}
\end{theorem}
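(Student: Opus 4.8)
The plan is to establish the three identities in the order (1), then (3), then (2), deducing each purely from the axioms (G1)--(G4), the definition of the cooperation $\boxplus$ in Definition \ref{defbox}, the algebraic identities collected in Theorem \ref{the1.3}, and the gyrocommutative law of Definition \ref{com}. No topology enters; the argument is entirely formal manipulation of gyrogroup identities, with (1) serving as the base case that powers the other two.

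For (1), I would start from Gyrosum Inversion, $\ominus(a\oplus b)=\text{gyr}[a,b]\big((\ominus b)\oplus(\ominus a)\big)$. Applying the gyrocommutative law to the vector $(\ominus b)\oplus(\ominus a)$ rewrites it as $\text{gyr}[\ominus b,\ominus a]\big((\ominus a)\oplus(\ominus b)\big)$; even symmetry gives $\text{gyr}[\ominus b,\ominus a]=\text{gyr}[b,a]$, so the composition becomes $\text{gyr}[a,b]\circ\text{gyr}[b,a]$, which is $\text{id}_G$ by inversive symmetry $\text{gyr}^{-1}[a,b]=\text{gyr}[b,a]$. Hence $\ominus(a\oplus b)=(\ominus a)\oplus(\ominus b)=\ominus a\ominus b$.

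For (3), I would expand $a\boxplus b=a\oplus\text{gyr}[a,\ominus b]b$ by the definition of the cooperation and evaluate $\text{gyr}[a,\ominus b]b$ using the identity $\text{gyr}[x,y]z=\ominus(x\oplus y)\oplus\big(x\oplus(y\oplus z)\big)$ from Theorem \ref{the1.3} with $(x,y,z)=(a,\ominus b,b)$: since $\ominus b\oplus b=0$, the right side collapses to $\ominus(a\ominus b)\oplus a$, and part (1) together with $\ominus(\ominus b)=b$ turns $\ominus(a\ominus b)$ into $\ominus a\oplus b$. This yields $a\boxplus b=a\oplus\big((\ominus a\oplus b)\oplus a\big)$, which is (3). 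Finally (2) follows: the right gyroassociative law applied to $(\ominus a\oplus b)\oplus a$ gives $\ominus a\oplus\big(b\oplus\text{gyr}[b,\ominus a]a\big)$, so $a\boxplus b=a\oplus\Big(\ominus a\oplus\big(b\oplus\text{gyr}[b,\ominus a]a\big)\Big)$; the left cancellation law $(\ominus a)\oplus(a\oplus b)=b$ from Theorem \ref{the1.3}, applied with $a$ replaced by $\ominus a$, then simplifies the right side to $b\oplus\text{gyr}[b,\ominus a]a=b\boxplus a$. (Alternatively one can read (2) off the Cogyroautomorphic Inverse Theorem after checking that $\ominus(a\boxplus b)=\ominus a\boxplus\ominus b$ holds in the gyrocommutative case, which in turn uses that gyrations commute with inversion together with even symmetry.)

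There is no genuine conceptual obstacle, since the statement is a formal consequence of the axioms, already present in Ungar's work. The only point needing care is bookkeeping inside the gyrator brackets: one must keep track of which elements occupy the two slots of each $\text{gyr}[\cdot,\cdot]$ when substituting into the gyroassociative laws, Gyrosum Inversion, and the ``gyr-as-$\oplus$'' identity, and must justify every cancellation of a composed pair of gyroautomorphisms by inversive or even symmetry rather than by an illegitimate appeal to associativity. Committing to the order (1) $\Rightarrow$ (3) $\Rightarrow$ (2) is exactly what keeps each step self-contained.
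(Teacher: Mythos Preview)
Your argument is correct, but there is nothing in the paper to compare it against: Theorem \ref{the2.5com} is simply quoted from \cite{Ung} and the paper supplies no proof of its own. Your derivation of (1) from Gyrosum Inversion plus gyrocommutativity, even symmetry, and inversive symmetry is the standard one, and your chain (1) $\Rightarrow$ (3) $\Rightarrow$ (2) is clean; the only step worth double-checking is the left cancellation $a\oplus(\ominus a\oplus x)=x$, which you obtain legitimately from Theorem \ref{the1.3}(3) by substituting $\ominus a$ for $a$ and using $\ominus(\ominus a)=a$.

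It is worth noting that the paper does prove, in Proposition \ref{the3.22s}, the identity $a\boxplus b=b\oplus((\ominus b\oplus a)\oplus b)$ for \emph{arbitrary} gyrogroups, via the left gyroassociative law and the left/right loop properties. Swapping $a$ and $b$ in that proposition gives $b\boxplus a=a\oplus((\ominus a\oplus b)\oplus a)$, so in the gyrocommutative case one could alternatively deduce (3) from (2) and Proposition \ref{the3.22s}. Your route goes the other way (proving (3) first via (1) and then reading off (2)), which has the mild advantage of keeping the dependence on gyrocommutativity localized to the single use of (1).
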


The initial definition of a subgyrogroup was introduced in \cite[Sect. 4]{Fe}, where it was termed a "gyro-subgroup".

\begin{definition}\cite[Subgyrogroups]{Suk3}
Let $G$ be a gyrogroup. A nonempty subset $H$ of $G$
is a subgyrogroup, written $H\leq G$, if $H$ is a gyrogroup under the operation inherited
from $G$ and the restriction of $\text{gyr}[a,b]$ to $H$ becomes an automorphism of $H$ for all
$a, b\in H$.
\end{definition}
Furthermore, a subgyrogroup $H$ of $G$ is said to be an {\it $L$-subgyrogroup} \cite{Suk3}, denoted by $H\leq_L G$,
if $\text{gyr}[a, h](H) =H$ for all $a\in G$ and $h\in H$.

\begin{proposition}\cite[The Subgyrogroup Criterion]{ST}
A nonempty subset $H$ of $G$ is a
subgyrogroup if and only if $a\in H$ implies $\ominus a\in H$ and $a, b\in H$ implies $a\oplus b\in H$.
\end{proposition}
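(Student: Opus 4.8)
The plan is to dispense with the forward implication immediately and concentrate on the converse. If $H$ is a subgyrogroup, then by definition $(H,\oplus)$ is a gyrogroup under the inherited operation, so $H$ is closed under $\oplus$; moreover the inverse of $a\in H$ computed inside $(H,\oplus)$ must coincide with $\ominus a$ by uniqueness of inverses in $G$, so $\ominus a\in H$. This settles one direction with no real work.

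For the converse, assume $H\neq\emptyset$ is closed under $\oplus$ and under $a\mapsto\ominus a$. First I would produce the identity: choosing any $a\in H$, closure gives $\ominus a\in H$ and then $a\oplus(\ominus a)=0\in H$, so $(H,\oplus)$ is a groupoid containing $0$ and satisfying $(G1)$--$(G2)$ with the same identity and inverses as $G$. The crux is $(G3)$: I must show that for $a,b\in H$ the map $\text{gyr}[a,b]$ carries $H$ into $H$ and restricts to an automorphism of $(H,\oplus)$. For stability of $H$ under $\text{gyr}[a,b]$ I would invoke the identity in Theorem \ref{the1.3}(6), namely $\text{gyr}[a,b](c)=\ominus(a\oplus b)\oplus\bigl(a\oplus(b\oplus c)\bigr)$: when $a,b,c\in H$, repeated application of closure under $\oplus$ and under $\ominus$ shows the right-hand side lies in $H$, hence $\text{gyr}[a,b](H)\subseteq H$.

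Next I would check that $\text{gyr}[a,b]|_H$ is a bijection of $H$ onto itself. It is a groupoid homomorphism of $(H,\oplus)$ because it is one on $G$ and $H$ is closed; for invertibility I would use Theorem \ref{the1.3}(9), $\text{gyr}^{-1}[a,b]=\text{gyr}[b,a]$, together with the previous step applied to the pair $(b,a)\in H\times H$, which shows $\text{gyr}[b,a]$ also maps $H$ into $H$. Since $\text{gyr}[a,b]$ and $\text{gyr}[b,a]$ are mutually inverse on $G$, their restrictions are mutually inverse on $H$, so $\text{gyr}[a,b]|_H\in Aut(H,\oplus)$. The gyroassociative identity $x\oplus(y\oplus z)=(x\oplus y)\oplus\text{gyr}[x,y](z)$ and the loop identity $(G4)$ then hold in $H$ simply by restriction from $G$. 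Collecting these facts, $(H,\oplus)$ satisfies $(G1)$--$(G4)$, so it is a gyrogroup, and the restriction of each $\text{gyr}[a,b]$ with $a,b\in H$ is an automorphism of $H$; therefore $H$ is a subgyrogroup. The only step requiring genuine care is the closure of $H$ under the gyroautomorphisms, which is precisely where the expansion formula in Theorem \ref{the1.3}(6) does the work; everything else is inherited verbatim from $G$.
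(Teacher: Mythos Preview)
The paper does not supply its own proof of this proposition; it is merely quoted from \cite{ST}. Your argument is correct and is exactly the standard one: the formula $\text{gyr}[a,b](c)=\ominus(a\oplus b)\oplus\bigl(a\oplus(b\oplus c)\bigr)$ from Theorem~\ref{the1.3}(6) gives closure of $H$ under the gyroautomorphisms, and the inversive symmetry $\text{gyr}^{-1}[a,b]=\text{gyr}[b,a]$ from Theorem~\ref{the1.3}(9) upgrades this to bijectivity of $\text{gyr}[a,b]|_H$, after which $(G1)$--$(G4)$ are inherited by restriction. There is nothing to compare against in the present paper, and nothing missing in your write-up.
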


Subsequently, we investigate the properties of gyrogroup homomorphisms, which are functions that respect the gyrogroup operation, along with normal subgyrogroups. Furthermore, we explore quotient gyrogroups, which are the mathematical structures obtained by partitioning a gyrogroup into a set of cosets relative to a normal subgyrogroup. These concepts are developed in parallel with the analogous concepts in group theory.

Let $G$ and $H$ be gyrogroups. A map $\varphi:G\rightarrow H$ is called a {\it~gyrogroup homomorphism}
if $\varphi(a\oplus b)=\varphi(a)\oplus \varphi(b)$ for all $a, b\in G$.

The existing literature has established that within the realm of group theory, a subgroup is considered normal if, and only if, it serves as the kernel of at least one group homomorphism. Utilizing this definition, the author analogously defined a normal subgyrogroup by adopting a similar approach.

\begin{definition}\cite[Normal Subgyrogroups]{ST}\label{NS}
A subgyrogroup $N$ of a gyrogroup $G$ is
normal in $G$, written $N\trianglelefteq G$, if it is the kernel of a gyrogroup homomorphism of $G$.
\end{definition}

Indeed, an operation is specified on the quotient space $G/\ker\varphi$ in a manner that is inherently straightforward:
$$(a\oplus \ker\varphi)\oplus(b\oplus \ker\varphi)=(a\oplus b)\oplus \ker\varphi, \text~for~all~a, b\in G.$$
It is a well-defined operation. In fact, the
coset space $G/\ker\varphi$ forms a gyrogroup, called a {\it~quotient gyrogroup}.
The map $\pi: G\rightarrow G/\ker\varphi$ given by $\pi(a)=a\oplus \ker\varphi$ defines a surjective
gyrogroup homomorphism, which will be referred to as the {\it~canonical projection} \cite{ST}.

\begin{theorem}\label{pro2.9s}
Let $G$ be a gyrocommutative gyrogroup and $N\unlhd G$, then the quotient gyrogroup $G/N$ is also a gyrocommutative gyrogroup.
\end{theorem}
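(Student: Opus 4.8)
The plan is to transport the gyrocommutative identity of $G$ across the canonical projection $\pi\colon G\to G/N$, which—by the discussion preceding the statement—is a surjective gyrogroup homomorphism onto the gyrogroup $G/N$ with operation $(a\oplus N)\oplus(b\oplus N)=(a\oplus b)\oplus N$. The first step is to record two elementary facts valid for any gyrogroup homomorphism $\varphi$: that $\varphi(0)=0$ and $\varphi(\ominus x)=\ominus\varphi(x)$ (both follow from the homomorphism identity together with the uniqueness of the identity and of inverses in a gyrogroup), and that $\varphi$ intertwines gyrations:
$$\varphi(\text{gyr}[a,b]c)=\text{gyr}[\varphi(a),\varphi(b)]\varphi(c)\qquad (a,b,c\in G).$$
The last identity is obtained by applying $\varphi$ to the formula $\text{gyr}[a,b](c)=\ominus(a\oplus b)\oplus\bigl(a\oplus(b\oplus c)\bigr)$ from Theorem \ref{the1.3}(6), together with $\varphi(\ominus x)=\ominus\varphi(x)$.

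Applying the intertwining identity to $\pi$ yields the explicit description of the gyroautomorphisms of the quotient: for all $a,b,c\in G$,
$$\text{gyr}[a\oplus N,\,b\oplus N](c\oplus N)=(\text{gyr}[a,b]c)\oplus N.$$
Next I would fix an arbitrary pair of cosets $a\oplus N,\ b\oplus N\in G/N$ and apply $\pi$ to the gyrocommutative law $a\oplus b=\text{gyr}[a,b](b\oplus a)$, which holds in $G$ by hypothesis. Since $\pi$ is a homomorphism, the left-hand side becomes $\pi(a\oplus b)=(a\oplus N)\oplus(b\oplus N)$; applying the intertwining identity to the right-hand side and using $\pi(b\oplus a)=(b\oplus N)\oplus(a\oplus N)$, it becomes $\text{gyr}[a\oplus N,\,b\oplus N]\bigl((b\oplus N)\oplus(a\oplus N)\bigr)$. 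Equating the two expressions gives precisely the gyrocommutative law in $G/N$, completing the argument.

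There is essentially no hard step here: the only point requiring genuine care is the verification that gyrogroup homomorphisms commute with gyrations (equivalently, the formula for $\text{gyr}$ in $G/N$ displayed above), since everything else is a one-line transport of the gyrocommutative identity through $\pi$. If that intertwining property is taken as known from the cited references on gyrogroup homomorphisms, the proof reduces to two lines.
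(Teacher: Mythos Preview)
Your proposal is correct and follows essentially the same approach as the paper: both transport the gyrocommutative identity through the canonical projection $\pi$, using that a gyrogroup homomorphism intertwines gyrations, i.e.\ $\pi(\text{gyr}[a,b]c)=\text{gyr}[\pi(a),\pi(b)]\pi(c)$. The only difference is that you justify this intertwining explicitly via Theorem~\ref{the1.3}(6), whereas the paper simply invokes the fact that $\pi$ is a surjective homomorphism.
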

\begin{proof}
Let $\pi: G\rightarrow G/N$ be the canonical projection defined by $\pi(a)=a\oplus N$.
Put $a, b\in G$, and consider their respective cosets in the quotient gyrogroup \( G/N \), denoted by \( a \oplus N \) and \( b \oplus N \).
Then we have
\begin{align*}
&(a\oplus N)\oplus(b\oplus N)=\pi(a)\oplus \pi(b)
\\&=\pi(a\oplus b)
\\&=\pi(\text{gyr}[a,b](b\oplus a))\quad  \text{by Definition \ref{com}}
\\&=\text{gyr}[\pi(a),\pi(b)](\pi(b)\oplus \pi(a))  \quad  \text {since}~\pi~\text {is a surjective homomorphism}
\\&=\text{gyr}[a\oplus N,b\oplus N]((b\oplus N)\oplus (a\oplus N)).
\end{align*}
Hence, $G/H$ is a gyrocommutative gyrogroup.
\end{proof}

\begin{proposition}\cite{Suk3}\label{pro2.2}
 Let $G$ be a gyrogroup and let $X\subseteq G$. Then the following are equivalent:
 \begin{enumerate}
\item[($1$)] $\text{gyr}[a,b](X)\subseteq X$ for all $a, b\in G$;
\item[($2$)] $\text{gyr}[a,b](X) = X$ for all $a, b\in G$.
\end{enumerate}
\end{proposition}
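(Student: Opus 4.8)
The plan is to observe that the implication $(2)\Rightarrow(1)$ is trivial, so the entire content lies in showing $(1)\Rightarrow(2)$, and for that the only tool needed is that each gyroautomorphism is a genuine automorphism of $(G,\oplus)$ whose inverse is again a gyroautomorphism. Concretely, I would invoke the inversive symmetry property $\text{gyr}^{-1}[a,b]=\text{gyr}[b,a]$ from Theorem~\ref{the1.3}(9).

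First I would fix arbitrary $a,b\in G$ and record two instances of the hypothesis $(1)$: namely $\text{gyr}[a,b](X)\subseteq X$ and, applying $(1)$ to the pair $(b,a)$ instead, $\text{gyr}[b,a](X)\subseteq X$. Next I would apply the automorphism $\text{gyr}[a,b]$ to both sides of the second inclusion, using that an automorphism preserves inclusions, to get
\[
\text{gyr}[a,b]\bigl(\text{gyr}[b,a](X)\bigr)\subseteq \text{gyr}[a,b](X)\subseteq X.
\]
By Theorem~\ref{the1.3}(9) the composite $\text{gyr}[a,b]\circ\text{gyr}[b,a]$ is the identity map on $G$, so the left-hand side equals $X$, giving $X\subseteq\text{gyr}[a,b](X)$. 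Combining this with the hypothesis $\text{gyr}[a,b](X)\subseteq X$ yields $\text{gyr}[a,b](X)=X$, and since $a,b$ were arbitrary this is exactly $(2)$.

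I do not anticipate a genuine obstacle here; the argument is a one-line symmetry trick. The only point worth flagging is that the hypothesis in $(1)$ must be used for \emph{all} pairs — in particular for the swapped pair $(b,a)$ — since without that uniformity one could not produce the reverse inclusion. It is also worth noting that nothing beyond $(G3)$ and the inversive symmetry identity is used, so the statement holds for arbitrary gyrogroups with no gyrocommutativity assumption.
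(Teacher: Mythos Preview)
Your argument is correct. Note, however, that the paper does not supply its own proof of this proposition: it is quoted from \cite{Suk3} and stated without proof, so there is no in-paper argument to compare against. Your use of the inversive symmetry $\text{gyr}^{-1}[a,b]=\text{gyr}[b,a]$ from Theorem~\ref{the1.3}(9) to turn the inclusion $\text{gyr}[b,a](X)\subseteq X$ into $X\subseteq\text{gyr}[a,b](X)$ is exactly the standard proof of this fact, and your remark that the hypothesis must be invoked for the swapped pair $(b,a)$ is the right observation.
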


The following theorem demonstrates a characteristic property of a normal subgyrogroup.

\begin{theorem}\cite{ST}\label{the2.5}
Let $N$ be a subgyrogroup of a gyrogroup $G$. Then $N\unlhd G$ if and only if
$a\oplus(N\oplus b)=(a\oplus b)\oplus N=(a\oplus N)\oplus b$
for all $a, b\in G$.
\end{theorem}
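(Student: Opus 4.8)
\noindent The plan is to treat the two implications separately; in both directions the decisive auxiliary fact will be that $N$ is \emph{gyr-invariant}, i.e.\ $\text{gyr}[a,b](N)=N$ for all $a,b\in G$, and that the left and right cosets of $N$ coincide.

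For \emph{necessity}, suppose $N=\ker\varphi$ for a gyrogroup homomorphism $\varphi\colon G\to H$. I would first record the standard facts that $\varphi$ preserves $0$, inversion and gyroautomorphisms (the last through Theorem~\ref{the1.3}(6)), hence also the cooperation, $\varphi(a\boxplus b)=\varphi(a)\boxplus\varphi(b)$; then $\varphi(\text{gyr}[a,b]n)=\text{gyr}[\varphi(a),\varphi(b)](0)=0$ and Proposition~\ref{pro2.2} give $\text{gyr}[a,b](N)=N$. Using left cancellation (Theorem~\ref{the1.3}(3)), the identities $(a\boxminus b)\oplus b=a$, $(a\oplus b)\boxminus b=a$ and $a\boxminus a=0$ (the second from the left loop property, the left gyroassociative law and Theorem~\ref{the1.3}(8); the third from the first by cancellation), and the fact that $\varphi$ respects $\oplus$ and $\boxplus$, one verifies $a\oplus N=\varphi^{-1}(\varphi(a))=N\oplus a$ for every $a$, so in particular $(a\oplus b)\oplus N=\varphi^{-1}(\varphi(a\oplus b))$. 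Now every element of $(a\oplus N)\oplus b$, of $a\oplus(N\oplus b)$ and of $(a\oplus b)\oplus N$ has $\varphi$-value $\varphi(a)\oplus\varphi(b)=\varphi(a\oplus b)$, so all three sets are contained in $\varphi^{-1}(\varphi(a\oplus b))=(a\oplus b)\oplus N$. For the reverse inclusions, given $y$ with $\varphi(y)=\varphi(a\oplus b)$ I would solve the defining equations: $(a\oplus m)\oplus b=y$ has the unique solution $m=\ominus a\oplus(y\boxminus b)$ and $a\oplus(m\oplus b)=y$ has the unique solution $m=(\ominus a\oplus y)\boxminus b$ (cancellation together with the identities just listed), and in each case $\varphi(m)=0$, so $m\in N$. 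This yields the three-fold equality.

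For \emph{sufficiency}, assume the displayed identities. Putting $a=0$ in $a\oplus(N\oplus b)=(a\oplus b)\oplus N$ yields $N\oplus b=b\oplus N$ for all $b$. For gyr-invariance, fix $n\in N$: since $b\oplus n\in b\oplus N=N\oplus b$ we may write $b\oplus n=n_1\oplus b$ with $n_1\in N$, and then $a\oplus(n_1\oplus b)\in a\oplus(N\oplus b)=(a\oplus b)\oplus N$ gives $a\oplus(b\oplus n)=(a\oplus b)\oplus n_2$ with $n_2\in N$; by Theorem~\ref{the1.3}(6), $\text{gyr}[a,b](n)=\ominus(a\oplus b)\oplus\big(a\oplus(b\oplus n)\big)=n_2\in N$, so $\text{gyr}[a,b](N)=N$ by Proposition~\ref{pro2.2}. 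In particular $N$ is an $L$-subgyrogroup, so its left cosets partition $G$ (see~\cite{Suk3}); I would form $G/N$ with $(a\oplus N)\oplus(b\oplus N):=(a\oplus b)\oplus N$. This is well defined, since if $a\oplus N=a'\oplus N$ and $b\oplus N=b'\oplus N$ then, using the two hypotheses and $N\oplus x=x\oplus N$,
\[
(a\oplus b)\oplus N=(a\oplus N)\oplus b=(a'\oplus N)\oplus b=(a'\oplus b)\oplus N=a'\oplus(b\oplus N)=a'\oplus(b'\oplus N)=(a'\oplus b')\oplus N .
\]
The identity of $G/N$ is $N$ and $\ominus a\oplus N$ is a two-sided inverse of $a\oplus N$; moreover $G/N$ has left cancellation (from Theorem~\ref{the1.3}(1),(3) and gyr-invariance), so two-sided inverses in $G/N$ are unique, whence $a\oplus N=a'\oplus N$ forces $\ominus a\oplus N=\ominus a'\oplus N$.

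It remains to put gyroautomorphisms on $G/N$, and this is the only delicate step. Setting $\text{gyr}[a\oplus N,b\oplus N](c\oplus N):=\text{gyr}[a,b](c)\oplus N$, independence of the representative of $c$ is immediate from gyr-invariance. Independence of the representatives of $a$ and $b$ is the crux: writing $\text{gyr}[a,b](c)=\ominus(a\oplus b)\oplus\big(a\oplus(b\oplus c)\big)$ via Theorem~\ref{the1.3}(6), one first shows (repeatedly using well-definedness of the quotient operation, the left gyroassociative law and gyr-invariance) that $(a\oplus b)\oplus N$ and $\big(a\oplus(b\oplus c)\big)\oplus N$ depend only on $a\oplus N$ and $b\oplus N$, and then, using that $\ominus(\cdot)\oplus N$ is well defined on cosets together with $N\oplus x=x\oplus N$, that $\text{gyr}[a,b](c)\oplus N=\text{gyr}[a',b'](c)\oplus N$. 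Once this is in place, $\text{gyr}[a\oplus N,b\oplus N]$ is an automorphism of $G/N$ ($\text{gyr}[a,b]$ is a homomorphism, and $\text{gyr}[b\oplus N,a\oplus N]$ is its inverse by Theorem~\ref{the1.3}(9)), axiom ($G3$) for $G/N$ is the left gyroassociative law of $G$ pushed through $\pi$, and ($G4$) is the left loop property. Hence $G/N$ is a gyrogroup and $\pi\colon G\to G/N$, $\pi(a)=a\oplus N$, is a surjective gyrogroup homomorphism with $\ker\pi=N$, so $N\unlhd G$. I expect this descent of the gyroautomorphisms to $G/N$ to be the main obstacle; everything else is bookkeeping with the identities of Theorem~\ref{the1.3} and the two consequences $N\oplus a=a\oplus N$ and $\text{gyr}[a,b](N)=N$ of the hypothesis.
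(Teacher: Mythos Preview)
The paper does not supply its own proof of this statement: Theorem~\ref{the2.5} is quoted from \cite{ST} without proof, so there is nothing in the present paper to compare your argument against.

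That said, your outline is sound and follows the natural route one finds in \cite{ST}. For necessity you use that a kernel is automatically gyr-invariant and that $\varphi^{-1}(\varphi(a))=a\oplus N=N\oplus a$, then identify all three sets with a single fibre of $\varphi$; this is correct, and your way of producing preimages via $m=\ominus a\oplus(y\boxminus b)$ and $m=(\ominus a\oplus y)\boxminus b$ is exactly the right use of the cancellation laws in Theorem~\ref{the1.3}. For sufficiency you correctly extract $N\oplus b=b\oplus N$ and $\text{gyr}[a,b](N)=N$ from the hypothesis, form $G/N$, and exhibit the canonical projection with kernel $N$. Your identification of the only genuinely delicate point---that the induced gyroautomorphism on $G/N$ is independent of the chosen representatives of $a\oplus N$ and $b\oplus N$---is accurate, and your proposed verification through the formula $\text{gyr}[a,b](c)=\ominus(a\oplus b)\oplus(a\oplus(b\oplus c))$ together with well-definedness of the quotient operation and of $\ominus(\cdot)\oplus N$ goes through. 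Nothing is missing beyond routine expansion of the sketch.
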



The theorem below provides a necessary and sufficient condition for normality of a
subgyrogroup. In this paper, $\text{gyr}[a,b](V)$ denotes $\{\text{gyr}[a,b](v): v\in V\}$.

\begin{theorem}\label{pro2.11s}
Let $N$ be a subgyrogroup of a gyrogroup $G$. Then $N\unlhd G$ if and only if the following conditions hold:
\begin{enumerate}
\item[($1$)] $\text{gyr}[a,b](N)\subseteq N$ for all $a, b\in G$;
\item[($2$)] $\ominus(a\oplus b)\oplus ((N\oplus a)\oplus b)=N$ for all $a, b\in G$.
\end{enumerate}
\end{theorem}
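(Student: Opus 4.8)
The plan is to use Theorem~\ref{the2.5} as the link between normality and the coset arithmetic of $G$, and to pass between the two conditions by means of the cancellation laws of Theorem~\ref{the1.3}. First I would record a formal reformulation of condition~(2) that uses nothing about $N$ beyond its being a subset: since the left translation $x\mapsto(a\oplus b)\oplus x$ is a bijection of $G$ with inverse $x\mapsto\ominus(a\oplus b)\oplus x$, and $c\oplus(\ominus c\oplus x)=x$ by Theorem~\ref{the1.3}(3), applying this translation to both sides shows that, for every $a,b\in G$, condition~(2) is equivalent to the set identity
\[
(a\oplus b)\oplus N=(N\oplus a)\oplus b,
\]
which I will call $(2')$. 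Comparing $(2')$ with the equalities furnished by Theorem~\ref{the2.5}, one sees that the whole theorem will come down to the single coset identity $a\oplus N=N\oplus a$ together with the gyration-invariance~(1).

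For the ``only if'' direction I would invoke Definition~\ref{NS} and write $N=\ker\varphi$ for a gyrogroup homomorphism $\varphi\colon G\to H$. Condition~(1) then follows from the standard fact that a homomorphism intertwines gyrations, $\varphi(\text{gyr}[a,b]z)=\text{gyr}[\varphi a,\varphi b]\varphi z$ (obtained by applying $\varphi$ to the left gyroassociative law and cancelling on the left in $H$), since a gyroautomorphism fixes the identity, so $\varphi(\text{gyr}[a,b]n)=\text{gyr}[\varphi a,\varphi b]0=0$ and hence $\text{gyr}[a,b]n\in\ker\varphi=N$. To obtain $a\oplus N=N\oplus a$: given $n\in N$, let $m$ be the solution of $m\oplus a=a\oplus n$ (unique, as right translations in $G$ are bijections); applying $\varphi$ and using $\varphi(n)=0$ gives $\varphi(m)\oplus\varphi(a)=\varphi(a)=0\oplus\varphi(a)$, so $\varphi(m)=0$ by right cancellation, whence $m\in N$ and $a\oplus n\in N\oplus a$. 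The reverse inclusion is symmetric, using the solution of $a\oplus m=n\oplus a$ and left cancellation. Finally, combining $a\oplus N=N\oplus a$ with Theorem~\ref{the2.5} gives $(a\oplus b)\oplus N=(a\oplus N)\oplus b=(N\oplus a)\oplus b$, i.e.\ $(2')$, hence~(2).

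For the ``if'' direction, assume (1) and~(2). By the reformulation, $(2')$ holds; specializing $b=0$ in $(2')$ and using $a\oplus 0=a$ yields $a\oplus N=N\oplus a$ for every $a$, and feeding this back into $(2')$ gives $(a\oplus b)\oplus N=(a\oplus N)\oplus b$. For the remaining Theorem~\ref{the2.5} equality I would compute termwise with the left gyroassociative law, $a\oplus(b\oplus N)=(a\oplus b)\oplus\text{gyr}[a,b](N)$, and then use~(1) together with Proposition~\ref{pro2.2} to replace $\text{gyr}[a,b](N)$ by $N$; since also $N\oplus b=b\oplus N$, this reads $a\oplus(N\oplus b)=(a\oplus b)\oplus N$. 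Thus $a\oplus(N\oplus b)=(a\oplus b)\oplus N=(a\oplus N)\oplus b$ for all $a,b$, and Theorem~\ref{the2.5} gives $N\unlhd G$.

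The step I expect to be the main obstacle is establishing $a\oplus N=N\oplus a$ (and condition~(1)) in the ``only if'' direction: these do not appear to follow from the coset equalities of Theorem~\ref{the2.5} by themselves, so one is forced back to the kernel description of normality, and the accompanying bookkeeping---that $\varphi$ respects $\oplus$, inversion and gyrations, that gyroautomorphisms fix $0$, and that the one-sided cancellation laws are applied on the correct side---is where the real care is needed.
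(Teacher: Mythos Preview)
Your proof is correct, and the sufficiency direction matches the paper's argument almost word for word: specialize $b=0$ in $(2')$ to get $a\oplus N=N\oplus a$, combine (1) with Proposition~\ref{pro2.2} and the left gyroassociative law (equivalently Theorem~\ref{the1.3}(6)) to obtain $a\oplus(N\oplus b)=(a\oplus b)\oplus N$, and conclude via Theorem~\ref{the2.5}.

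In the necessity direction, however, you take an unnecessary detour. You go back to the kernel description of $N$ in order to establish $a\oplus N=N\oplus a$ and condition~(1), explicitly worrying that ``these do not appear to follow from the coset equalities of Theorem~\ref{the2.5} by themselves.'' In fact they do, and this is exactly how the paper proceeds. Specializing $a=0$ in Theorem~\ref{the2.5} gives $0\oplus(N\oplus b)=(0\oplus b)\oplus N$, i.e.\ $N\oplus b=b\oplus N$ for all $b$. Substituting this into the identity $a\oplus(N\oplus b)=(a\oplus b)\oplus N$, left-cancelling, and invoking Theorem~\ref{the1.3}(6) yields $\text{gyr}[a,b](N)=N$ directly, which is condition~(1); condition~(2) then drops out from $(a\oplus N)\oplus b=(a\oplus b)\oplus N$ after the same substitution and a left cancellation. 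Your kernel argument is a valid alternative and has the virtue of making the role of Definition~\ref{NS} explicit, but it is longer than needed; the paper's route stays entirely at the level of the coset identities of Theorem~\ref{the2.5}, and the step you flagged as the main obstacle is in fact a one-line specialization.
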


\begin{proof}
To establish the necessity, let us assume $N \unlhd G$. Then, Theorem \ref{the2.5} applies.
According to Theorem \ref{the2.5}, when $a = 0$, it follows that $N \oplus b = b \oplus N $ for all $b \in G $. For all $a, b\in G$, we have
\begin{align*}
&a\oplus (N\oplus b)=(a\oplus b)\oplus N
\\&\Leftrightarrow \ominus(a\oplus b)\oplus(a\oplus (N\oplus b))=N \quad \text{Left cancellation law~}
\\&\Leftrightarrow \ominus(a\oplus b)\oplus(a\oplus (b\oplus N))=N \quad \text{since $N \oplus b = b \oplus N $~}
\\&\Leftrightarrow \text{gyr}[a,b](N)=\ominus(a\oplus b)\oplus(a\oplus (b\oplus N))=N \quad \text{by Theorem \ref{the1.3}(6)~}.
\end{align*}
So the condition (1) holds.

For all $a, b\in G$, we have
\begin{align*}
&(a\oplus N)\oplus b=(a\oplus b)\oplus N
\\&\Leftrightarrow \ominus(a\oplus b)\oplus((a\oplus N)\oplus b)=N \quad \text{Left cancellation law~}
\\&\Leftrightarrow \ominus(a\oplus b)\oplus((N\oplus a)\oplus b)=N \quad \text{since $N \oplus a=a\oplus N $~}.
\end{align*}
So the condition (2) holds.

To prove the sufficiency, take $a, b\in G$. According to (2), when $b= 0$, it follows that $\ominus a\oplus(N \oplus a)=N$ for all $a\in G $, which implies that $N \oplus a=a\oplus N $. Then we have
\begin{align*}
&\text{gyr}[a,b](N)\subseteq N
\\&\Leftrightarrow \text{gyr}[a,b](N)=N \quad  \text{by Proposition \ref{pro2.2}}
\\&\Leftrightarrow \ominus(a\oplus b)\oplus(a\oplus (b\oplus N))=N \quad\quad\quad \text{by Theorem \ref{the1.3}(6)~}
\\&\Leftrightarrow \ominus(a\oplus b)\oplus(a\oplus (N\oplus b))=N \quad \text{since $N \oplus b = b \oplus N $~}
\\&\Leftrightarrow a\oplus (N\oplus b)=(a\oplus b)\oplus N \quad \text{Left cancellation law}.
\end{align*}
We can get
\begin{align*}
&\ominus(a\oplus b)\oplus((N\oplus a)\oplus b)=N
\\&\Leftrightarrow \ominus(a\oplus b)\oplus((a\oplus N)\oplus b)=N \quad \text{since $N \oplus a=a\oplus N $~}
\\&\Leftrightarrow (a\oplus N)\oplus b=(a\oplus b)\oplus N \quad \text{Left cancellation law}.
\end{align*}
Therefore, based on Theorem \ref{the2.5}, it is concluded that $N\unlhd G$.
\end{proof}

Since in Topology 'normal' refers to a separation property of spaces, we will use the term
'invariant' to denote this property of subgyrogroups.

\subsection{The paratopological gyrogroup}
The family of open neighborhoods of the
identity 0 in $G$ will be denoted by $\mathcal{U}$.
No separation restrictions on the topological spaces considered in this paper are imposed unless we mention them explicitly.
Below $\psi(X)$, $\chi(X)$ and $l(X)$ denote the pseudocharacter, character and Lindel\"{o}f number of aspace $X$, respectively.
Moreover, the set of all positive integers denoted by $\mathbb{N}$ and the first infinite ordinal denoted by $\omega$.
We are prepared to present the definition of gyrogroups as follows.

\begin{definition}\cite{Atip}\label{def2.11}
A triple $(G, \tau,  \oplus)$ is called a {\it topological gyrogroup} if and only if
\begin{enumerate}
\item[(1)] $(G, \tau)$ is a topological space;
\item[(2)] $(G, \oplus)$ is a gyrogroup;
\item[(3)] The binary operation $\oplus:G \times G\rightarrow G$ is continuous where $G\times G$ is endowed with the product topology
and the operation of taking the inverse $\ominus(\cdot ) : G  \rightarrow G $, i.e. $x\rightarrow\ominus x$, is continuous.
\end{enumerate}
\end{definition}

If a triple $( G, \tau,  \oplus)$ satisfies the first two conditions and its binary operation is continuous, we call such
triple a {\it paratopological gyrogroup} \cite{Atip1}. Sometimes we will just say that $G$ is a topological gyrogroup (paratopological gyrogroup) if the binary operation and the topology are clear from the context.

\begin{example}\cite[Example 3]{Atip}\label{ex13}
Let's examine the set of all Einsteinian velocities, characterized as follows:
$$\mathbb{R}^3_{c} = \{\mathbf{v} \in \mathbb{R}^3 : \|\mathbf{v}\| < c\}.$$
In this context, $c$ represents the speed of light in a vacuum, and $\|\mathbf{v}\|$ denotes the Euclidean norm of a vector $\mathbf{v}$ in $\mathbb{R}^3$. This set, being a subset of a topological space (namely, $\mathbb{R}^3$ with its standard topology), naturally forms a topological space itself. We then introduce the operation of Einstein addition, $\oplus_E: \mathbb{R}^3_{c} \times \mathbb{R}^3_{c} \to \mathbb{R}^3_{c}$, for any vectors $\mathbf{u}, \mathbf{v}$ in $\mathbb{R}^3_{c}$, as
$$\mathbf{u} \oplus_E \mathbf{v} = \frac{1}{1 + \frac{\mathbf{u} \cdot \mathbf{v}}{c^2}} \left( \mathbf{u} \oplus \frac{1}{\gamma_{\mathbf{u}}} \mathbf{v} + \frac{1}{c^2} \frac{\gamma_{\mathbf{u}}}{1 + \gamma_{\mathbf{v}}} (\mathbf{u} \cdot \mathbf{v}) \mathbf{u} \right),$$
where $\mathbf{u} \cdot \mathbf{v}$ is the standard dot product of vectors in $\mathbb{R}^3$, and the gamma factor $\gamma_{\mathbf{u}}$ within the $c$-ball is defined by
$$\gamma_{\mathbf{u}} = \frac{1}{\sqrt{1 - \frac{\mathbf{u} \cdot \mathbf{u}}{c^2}}}.$$
As detailed in section 3.8 (page 91) of \cite{Ung}, the pair $(\mathbb{R}^3_{c}, \oplus_E)$ forms a gyrocommutative gyrogroup, called an
{\it Einstein gyrogroup}, but not a group.
Moreover, given the standard topology derived from $\mathbb{R}^3$, it is evident that the operation $\oplus_E$ is continuous.
Additionally, it has been established that the inverse of any $\mathbf{u}$ in $\mathbb{R}^3_c$ is $-\mathbf{u}$, thereby ensuring that the inversion operation is continuous as well. Thus, $\mathbb{R}^3_{c}$ constitutes a topological gyrogroup, distinguishing itself from being a topological group, a semitopological group, or a paratopological group, in a rephrased manner.
\end{example}

\begin{example}
Suppose that $(\mathbb{R}, \tau_s)$ is Sorgenfrey line and $(G, \tau)$ is a topological gyrogroup.
Then $\mathbb{R}\times G$ with
product topology is a paratopological gyrogroup and not a topological gyrogroup.
\end{example}

Next, we present the definition of strongly paratopological gyrogroups, which is crucial for the content of this paper.

\begin{definition}\cite{BL}\label{defst}
Let $( G, \tau,  \oplus)$ be a topological gyrogroup. We say that $G$ is a strongly topological gyrogroup if
there exists a neighborhood base $\mathcal{U}$ of the identity 0 in $G$ such that, for every $U\in \mathcal{U}$,
$\text{gyr}[x, y](U)=U$ for any $x, y\in G$.
\end{definition}

A paratopological gyrogroup $( G, \tau,  \oplus)$ is called a {\it strongly paratopological gyrogroup} \cite{JX2} if
there exists a neighborhood base $\mathcal{U}$ of the identity 0 in $G$ such that, for every $U\in \mathcal{U}$,
$\text{gyr}[x, y](U)=U$ for any $x, y\in G$.

\begin{example}\cite[Example 2]{Atip}\label{ex16}
The M\"{o}bius gyrogroup $\mathbb{D}$ is the complex open unit disk $\{z \in \mathbb{C}:|z| < 1\}$ equipped with a M\"{o}bius addition $\oplus_{M}: \mathbb{D} \times \mathbb{D} \to \mathbb{D}$ defined by
$$a\oplus_{M} b=\frac{a+b}{1+\bar{a}b}, \text{where~} a, b \in \mathbb{D}.$$
 It also features a gyroautomorphism $\text{gyr}[a, b]$ defined by
$$\text{gyr}[a,b](c)=\frac{1+a\bar{b}}{1+\bar{a}b}c, \text{where~} a, b, c \in \mathbb{D}.$$
Indeed, the M\"{o}bius gyrogroup, denoted as $(\mathbb{D}, \oplus_M)$, has been established as a gyrocommutative gyrogroup, as referenced in \cite{Ung}.
When the M\"{o}bius gyrogroup $(\mathbb{D}, \oplus_M)$ is equipped with the standard topology, it becomes a topological gyrogroup, although it does not qualify as a topological group.
For each $n \in \mathbb{N}$, we define the set $U_n=\{x \in \mathbb{D} : |x| \leq \frac{1}{n}\}$.
The family $\mathcal{U} = \{U_n : n \in \mathbb{N}\}$ serves as a neighborhood base at the origin 0. It is also noted that $|\frac{1+a\overline{b}}{1+\overline{a}b}|=1$. Hence, for all $x, y \in \mathbb{D}$ and for any $U \in \mathcal{U}$, it is determined that $\text{gyr}[x, y](U)\subseteq U$. From this inclusion, and by reference to Proposition \ref{pro2.2}, we deduce that $\text{gyr}[x, y](U)=U$. This confirms that $(\mathbb{D}, \oplus)$ is a strongly topological gyrocommutative gyrogroup.
\end{example}

We list the following known conclusions, which will be used in the proof of our paper.

\begin{proposition}\cite{Atip1}\label{pro23}
Let $G$ be a paratopological gyrogroup, $x, y\in G$ and $A, B\subseteq G$.
\begin{enumerate}
\item[(1)] The left translation $L_{x}: G \rightarrow G$, where $L_{x}(y) = x \oplus y$ for every $y\in G$, is
homeomorphism;
\item[(2)] $A$ is closed if and only if $x\oplus A$ is closed;
\item[(3)] $A$ is open if and only if $x\oplus A$ and $B\oplus A$ are open;
\end{enumerate}
\end{proposition}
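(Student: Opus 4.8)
The plan is to derive all three statements from a single fact: for every $x\in G$ the left translation $L_x$ is a homeomorphism of $G$ whose inverse is the left translation $L_{\ominus x}$.

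First I would establish that $L_x$ is a bijection. Applying the left cancellation law (Theorem~\ref{the1.3}(3)) with $a=x$ gives $L_{\ominus x}\circ L_x=\mathrm{id}_G$, so $L_x$ is injective and $L_{\ominus x}$ is surjective; applying the same law with $a=\ominus x$ gives $L_{\ominus(\ominus x)}\circ L_{\ominus x}=\mathrm{id}_G$, so $L_{\ominus x}$ is also injective, hence bijective. Combining $L_{\ominus x}\circ L_x=\mathrm{id}_G$ with the bijectivity of $L_{\ominus x}$ yields $L_x=(L_{\ominus x})^{-1}$, so $L_x$ is a bijection with inverse $L_{\ominus x}$. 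For continuity, I would note that the map $y\mapsto(x,y)$ from $G$ to $G\times G$ is continuous (its two coordinate functions, the constant $x$ and the identity, are continuous), so $L_x$ is the composition of this map with the jointly continuous operation $\oplus$, hence continuous. The same argument applied to $\ominus x$ shows $L_{\ominus x}=(L_x)^{-1}$ is continuous, so $L_x$ is a homeomorphism; this proves (1).

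For (2), a homeomorphism sends closed sets to closed sets, so $A$ closed implies $x\oplus A=L_x(A)$ closed, while conversely $x\oplus A$ closed implies $A=L_{\ominus x}(x\oplus A)$ is the image of a closed set under the homeomorphism $L_{\ominus x}$, hence closed. For (3), a homeomorphism is an open map, so $A$ open implies $x\oplus A=L_x(A)$ is open, and also $B\oplus A=\bigcup_{b\in B}(b\oplus A)=\bigcup_{b\in B}L_b(A)$ is open as a union of open sets; conversely $x\oplus A$ open implies $A=L_{\ominus x}(x\oplus A)$ is open. I expect the only real subtlety --- the ``main obstacle'' such as it is --- to be the reminder that in a mere paratopological gyrogroup the right translation $y\mapsto y\oplus x$ need not be a homeomorphism (its inverse involves the cogyro-operation and hence the gyroautomorphisms, which carry no continuity assumption), so one must express $B\oplus A$ as a union of \emph{left} translates of $A$ rather than as a union of right translates of $B$. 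Everything else reduces to the left cancellation law and the joint continuity of $\oplus$.
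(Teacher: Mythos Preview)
Your argument is correct and is the standard one; the paper does not give its own proof of this proposition but simply quotes it from \cite{Atip1}, so there is nothing to compare against. One small correction to your closing remark: in a paratopological gyrogroup the right translation $R_a$ \emph{is} always a homeomorphism---the paper proves this in Proposition~\ref{pro2.14s}, writing $R_a^{-1}=L_{\ominus a}\circ R_{\ominus a}\circ L_a$---so the issue is not that right translations ``need not be'' homeomorphisms but that proving they are uses Proposition~\ref{pro23} itself; your instinct to express $B\oplus A$ as a union of left translates is therefore the right one for logical, not topological, reasons.
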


In Section 5 of \cite{Cai}, it is established that the right translation operation in a paratopological loop results in a homeomorphism. Given that every gyrogroup is recognized as a left Bol loop \cite{Sa}, it can be inferred that a paratopological gyrogroup is also a paratopological loop. Consequently, it is deduced that the right translation within a paratopological gyrogroup is indeed a homeomorphism. Next, we will provide a detailed proof of this conclusion.

\begin{proposition} \label{the3.22s}
Let $(G, \oplus)$ be a  gyrogroup. Then
$a\boxplus b=b\oplus((\ominus b\oplus a)\oplus b)$
for all $a, b\in G$.
\end{proposition}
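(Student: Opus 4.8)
The plan is to derive this identity directly from the definition of the cogyrogroup operation together with the algebraic facts already collected in Theorem~\ref{the1.3} and Theorem~\ref{the2.5com}. Recall from Definition~\ref{defbox} that $a\boxplus b = a\oplus\text{gyr}[a,\ominus b]b$, and from Theorem~\ref{the2.5com}(3) we have the identity $a\boxplus b = a\oplus((\ominus a\oplus b)\oplus a)$, but this last one is stated only for gyrocommutative gyrogroups, so I should not simply quote it. Instead, I would work with the general-gyrogroup version. The target expression $b\oplus((\ominus b\oplus a)\oplus b)$ has the shape of Theorem~\ref{the2.5com}(3) with the roles of $a$ and $b$ interchanged, which strongly suggests the right route is to show that $b\oplus((\ominus b\oplus a)\oplus b)$ equals $a\oplus\text{gyr}[a,\ominus b]b$ purely formally.

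First I would expand the inner term using the right gyroassociative law, Theorem~\ref{the1.3}(1): $(\ominus b\oplus a)\oplus b = \ominus b\oplus(a\oplus\text{gyr}[a,\ominus b]b)$. Then the whole right-hand side becomes $b\oplus\bigl(\ominus b\oplus(a\oplus\text{gyr}[a,\ominus b]b)\bigr)$. Now I apply the left cancellation law, Theorem~\ref{the1.3}(3), in the form $b\oplus(\ominus b\oplus x) = x$ — note Theorem~\ref{the1.3}(3) is written as $(\ominus a)\oplus(a\oplus b)=b$, so I need it with $a$ replaced by $b$ and care that $b\oplus(\ominus b\oplus x)$ also reduces to $x$; this follows since $\ominus(\ominus b)=b$ (a standard consequence, e.g. from $(G2)$ and cancellation). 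Applying it with $x = a\oplus\text{gyr}[a,\ominus b]b$ gives exactly $a\oplus\text{gyr}[a,\ominus b]b = a\boxplus b$, completing the proof.

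The only genuine subtlety is making sure each cancellation is applied in a valid form for a general (possibly non-gyrocommutative) gyrogroup — in particular, checking $b\oplus(\ominus b\oplus x)=x$ rather than blindly citing the $(\ominus a)\oplus(a\oplus b)=b$ form, and correctly tracking the gyroautomorphism argument $\text{gyr}[a,\ominus b]$ through the right gyroassociative step (the law Theorem~\ref{the1.3}(1) introduces precisely $\text{gyr}[b,a]$-type corrections, so I must verify the bracket structure matches). None of this requires heavy computation; it is a short chain of substitutions, and I expect the main obstacle to be purely bookkeeping: pairing each manipulation with the exact labelled identity from Theorem~\ref{the1.3} so the argument is airtight in the non-gyrocommutative setting.
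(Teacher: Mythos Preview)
Your proposal is correct and arrives at the same endpoint as the paper, namely reducing $b\oplus((\ominus b\oplus a)\oplus b)$ to $a\oplus\text{gyr}[a,\ominus b]b=a\boxplus b$, but you take a slightly different route. The paper applies the \emph{left} gyroassociative law first to obtain $(b\oplus(\ominus b\oplus a))\oplus\text{gyr}[b,\ominus b\oplus a]b$, cancels $b\oplus(\ominus b\oplus a)=a$, and then invokes both the left and right loop properties to rewrite $\text{gyr}[b,\ominus b\oplus a]$ as $\text{gyr}[a,\ominus b]$. Your route---applying the \emph{right} gyroassociative law to the inner term $(\ominus b\oplus a)\oplus b$ so that $\text{gyr}[a,\ominus b]$ appears immediately, then cancelling $b\oplus(\ominus b\oplus x)=x$---is one step shorter and avoids the loop properties entirely. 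Both arguments are valid in an arbitrary (not necessarily gyrocommutative) gyrogroup; your only care point, that $b\oplus(\ominus b\oplus x)=x$ follows from Theorem~\ref{the1.3}(3) with $a$ replaced by $\ominus b$ and $\ominus(\ominus b)=b$, is correctly handled.
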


\begin{proof}
For all $a, b\in G$ we have
\begin{align*}
b\oplus((\ominus b\oplus a)\oplus b)&=(b\oplus(\ominus b\oplus a))\oplus \text{gyr}[b,\ominus b\oplus a]b \quad \text{by the left gyroassociative law~}
\\&=a\oplus \text{gyr}[a,\ominus b\oplus a]b \quad \text{by a left cancellation and a left loop property~}
\\&=a\oplus \text{gyr}[a,\ominus b]b \quad \text{by a right loop property~}
\\&=a\boxplus b.  \quad\quad\quad \text{by Definition \ref{defbox}~}
\end{align*}
\end{proof}

\begin{proposition}\cite[Theorem 2.22]{Ung}\label{the3.22}
Let $(G, \oplus)$ be a gyrogroup, and let $a\in G$. Then $L_a$ and $R_a$ are bijective.
\end{proposition}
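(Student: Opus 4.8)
The plan is to read off explicit one-sided inverses for $L_a$ and $R_a$ directly from the cancellation identities collected in Theorem~\ref{the1.3}, and then to upgrade these to genuine two-sided inverses by exploiting that $x\mapsto\ominus x$ is an involution of $G$: when an identity is quantified over all $a\in G$ it is therefore simultaneously quantified over all $\ominus a$, i.e.\ over all of $G$.

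For $L_a$ this is immediate. Item (3) of Theorem~\ref{the1.3}, namely $(\ominus a)\oplus(a\oplus b)=b$ for every $b\in G$, reads exactly as $L_{\ominus a}\circ L_a=\text{id}_G$. Hence $L_a$ has a left inverse, so it is injective, and $L_{\ominus a}$ has a right inverse, so it is surjective; since this holds for every $a\in G$ and $\ominus$ maps $G$ bijectively onto itself (because $\ominus(\ominus a)=a$ by ($G2$)), every left translation is simultaneously injective and surjective. In particular $L_a$ is bijective with $L_a^{-1}=L_{\ominus a}$.

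For $R_a$ I would argue in two steps, because the natural right-cancellation laws are phrased through the cooperation $\boxplus$ rather than $\oplus$ itself. First, item (4), $(a\ominus b)\boxplus b=a$, says that the map $x\mapsto x\boxplus b$ is a left inverse of the right translation $R_{\ominus b}=(\,\cdot\,\ominus b)$; hence $R_{\ominus b}$ is injective, and letting $b$ run over $G$ this yields injectivity of every right translation. Second, item (5), $(a\boxminus b)\oplus b=a$, says $R_b\circ(\,\cdot\,\boxminus b)=\text{id}_G$, so $R_b$ is surjective for every $b$. Combining the two conclusions shows that $R_a$ is bijective (indeed one can check that the two one-sided inverses coincide, giving $R_a^{-1}=(\,\cdot\,\boxminus a)$, but this is not needed for the statement).

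The only, rather minor, obstacle is precisely this asymmetry between the two cases: for $L_a$ a single identity (3) delivers a two-sided inverse after the involution trick, whereas for $R_a$ neither (4) nor (5) gives a two-sided $\oplus$-inverse on its own — (4) is expressed via $\boxplus$ and (5) only produces surjectivity — so one must use both, invoking once more that $\ominus$ is onto. No computation beyond quoting Theorem~\ref{the1.3} is required; alternatively, injectivity of $R_a$ could be reproved by hand from the left gyroassociative law together with a loop property, but the argument above is shorter.
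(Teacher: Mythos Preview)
Your argument is correct: for $L_a$, Theorem~\ref{the1.3}(3) yields $L_{\ominus a}\circ L_a=\text{id}_G$, and replacing $a$ by $\ominus a$ gives the other composition, so $L_a^{-1}=L_{\ominus a}$; for $R_a$, items (4) and (5) supply injectivity and surjectivity separately via the cooperation, exactly as you describe.

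Note, however, that the paper does not actually prove this proposition at all --- it is simply imported from \cite[Theorem~2.22]{Ung} and stated without proof. So there is no ``paper's own proof'' to compare against; your argument is a valid self-contained justification where the paper gives none. If anything, your proof is slightly more economical than the standard one in Ungar's book, which typically establishes the cancellation laws first and then deduces bijectivity, whereas you quote those laws directly from Theorem~\ref{the1.3} as already available in the present paper.
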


\begin{proposition}\label{pro2.14s}
Let $G$ be a paratopological gyrogroup, $a\in G$ and $A\subseteq G$.

\begin{enumerate}
\item[(1)]  The right translation $R_{a}: G \rightarrow G$, where $R_{a}(x) = x \oplus a$ for every $x\in G$, is
homeomorphism;
\item[(2)] $A$ is closed if and only if $A\boxminus a$ is closed;
\item[(3)] $A$ is open if and only if $A\boxminus a$ is open.
\end{enumerate}
\end{proposition}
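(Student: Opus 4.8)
The plan is to prove the three parts together, using the algebraic identities already collected, especially the cogyrogroup identity of Proposition \ref{the3.22s} and the loop properties of Theorem \ref{the1.3}. First I would verify that $R_a$ is a continuous bijection. Bijectivity is immediate from Proposition \ref{the3.22} (or from the subgyrogroup-free fact that $R_a$ has an inverse). For continuity, note $R_a(x)=x\oplus a$ is the composition of $x\mapsto(x,a)$ with the jointly continuous operation $\oplus$, hence continuous. The real work is to exhibit a continuous inverse, and here is where Proposition \ref{the3.22s} does the job: by Theorem \ref{the1.3}(5) we have $(a\boxminus b)\oplus b=a$, so the map $S_a\colon x\mapsto x\boxminus a$ satisfies $S_a(R_a(x))=(x\oplus a)\boxminus a$; I would check this equals $x$ using $(u\boxminus b)\oplus b=u$ with $u=x\oplus a$ isn't quite it — rather, I want $R_a\circ S_a=\mathrm{id}$ and $S_a\circ R_a=\mathrm{id}$, which follow from Theorem \ref{the1.3}(4), $(a\ominus b)\boxplus b=a$, and (5), $(a\boxminus b)\oplus b=a$, applied with the right substitutions. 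So $R_a^{-1}=S_a$ where $S_a(x)=x\boxminus a$.

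Thus the crux of part (1) is the continuity of $S_a\colon x\mapsto x\boxminus a$. Using Proposition \ref{the3.22s} with the roles arranged so that $x\boxminus a = x\boxplus(\ominus a)$, I would rewrite $x\boxplus(\ominus a) = (\ominus a)\oplus\big((a\oplus x)\oplus(\ominus a)\big)$ after applying Theorem \ref{the2.5com}(2) (gyrocommutativity of $\boxplus$) — but since the statement does not assume gyrocommutativity, I should instead apply Proposition \ref{the3.22s} directly in the form $x\boxplus b = b\oplus((\ominus b\oplus x)\oplus b)$ with $b=\ominus a$, giving $x\boxminus a=(\ominus a)\oplus\big((a\oplus x)\oplus(\ominus a)\big)$. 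Every map appearing here — $x\mapsto a\oplus x$, then $z\mapsto z\oplus(\ominus a)$, then $w\mapsto(\ominus a)\oplus w$ — is a left or right translation composed appropriately, and each is continuous because $\oplus$ is jointly continuous (we do not need continuity of inversion since $\ominus a$ is a fixed point). Hence $S_a$ is continuous, and combined with $S_a=R_a^{-1}$ this gives that $R_a$ is a homeomorphism, proving (1).

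For parts (2) and (3), I would simply invoke (1). Since $R_{\ominus a}$ is a homeomorphism and $R_{\ominus a}(x)=x\oplus(\ominus a)=x\boxminus a$ is not literally $x\boxminus a$ — here care is needed: $x\oplus(\ominus a)$ is the ordinary right translate, whereas $x\boxminus a=x\boxplus(\ominus a)$ is the \emph{co}-operation. So instead I would introduce the map $T_a\colon x\mapsto x\boxminus a$, observe from the computation above that $T_a=R_a^{-1}$, hence $T_a$ is a homeomorphism of $G$ onto itself. Then $A$ is closed (resp. open) if and only if $T_a(A)=A\boxminus a$ is closed (resp. open), which is exactly (2) and (3).

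The main obstacle I anticipate is bookkeeping the distinction between the group-like right translation $R_a(x)=x\oplus a$ and the co-translation $x\mapsto x\boxminus a$, and getting the substitutions in Theorem \ref{the1.3}(4)–(5) and Proposition \ref{the3.22s} exactly right so that $R_a\circ(\,\cdot\boxminus a)$ and $(\,\cdot\boxminus a)\circ R_a$ both collapse to the identity; none of the individual steps is deep, but the non-associativity means one cannot reorder terms freely, so each application of a loop property or left-cancellation must be matched to the precise shape of the expression. Once that is pinned down, continuity in both directions is routine from joint continuity of $\oplus$ together with the fact that the inverses $\ominus a$ are constants.
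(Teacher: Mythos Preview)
Your proposal is correct and follows essentially the same route as the paper: bijectivity of $R_a$ from Proposition~\ref{the3.22}, continuity of $R_a$ from joint continuity of $\oplus$, and the key identification $R_a^{-1}(x)=x\boxminus a=(\ominus a)\oplus\big((a\oplus x)\oplus(\ominus a)\big)$ via Proposition~\ref{the3.22s}, which the paper records compactly as $R_a^{-1}=L_{\ominus a}\circ R_{\ominus a}\circ L_a$; parts (2) and (3) then follow exactly as you say from $A\boxminus a=R_a^{-1}(A)$.
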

\begin{proof}
By Proposition \ref{the3.22} we have that $R_a$ is bijective.
To prove (1), let $x\in G$ and let $U$ be a
neighborhood of $R_{a}(x) = x\oplus a$. By the joint continuity of $G$, there exist open subsets
$U_a$, $U_x$ of $G$ such that $a\in U_a$, $x\in U_x$ and $U_x\oplus U_a\subseteq U$. Hence $R_{a}(U_x)=U_x\oplus a\subseteq
U_x\oplus U_a\subseteq U$, which shows that $R_{a}:G\rightarrow G$ is continuous.

To prove that $(R_{a})^{-1}$ is continuous, we put $y=x\oplus a$, then $x=y \boxminus a=y \boxplus (\ominus a)
=(\ominus a)\oplus((a\oplus y)\oplus (\ominus a))$ by Theorem \ref{the3.22s}.
That is $R^{-1}_{a}(x)=(\ominus a)\oplus((a\oplus x)\oplus (\ominus a))$ for any $x\in G$.
So, $R^{-1}_{a}=L_{\ominus a}\circ R_{\ominus a}\circ L_{a}$ which is continuous by Proposition \ref{pro23}.
Thus we get $R_{a}$ is homeomorphism.
Observe that $A\boxminus a=R^{-1}_{a}(A)$. (2) and (3) are true because $R^{-1}_{a}$ is homeomorphism.
\end{proof}

\begin{proposition}\cite{JX2}\label{pro23s}
Let $G$ be a paratopological gyrogroup, $x, y\in G$ and $A, B\subseteq G$.
\begin{enumerate}
\item[(1)] $\text{gyr}[x,y]: G \rightarrow G$, for every $x, y\in G$, is
homeomorphism;
\item[(2)] $A$ is closed if and only if $\text{gyr}[x,y](A)$ is closed;
\item[(3)] $A$ is open if and only if $\text{gyr}[x,y](A)$ is open.
\end{enumerate}
\end{proposition}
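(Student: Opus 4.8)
The plan is to express $\text{gyr}[x,y]$ as a finite composition of left translations and then invoke Proposition \ref{pro23}(1). The key observation is the identity in Theorem \ref{the1.3}(6): for all $c\in G$ one has $\text{gyr}[x,y](c)=\ominus(x\oplus y)\oplus(x\oplus(y\oplus c))$. Read as a transformation of the variable $c$, this says exactly
$$\text{gyr}[x,y]=L_{\ominus(x\oplus y)}\circ L_{x}\circ L_{y},$$
that is, $\text{gyr}[x,y]$ is obtained by translating on the left successively by $y$, by $x$, and by $\ominus(x\oplus y)$.

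Since each left translation $L_{z}$ is a homeomorphism of $G$ by Proposition \ref{pro23}(1), and a composition of finitely many homeomorphisms is again a homeomorphism, statement (1) follows at once. If one prefers to argue more symmetrically, recall that $\text{gyr}[x,y]\in Aut(G,\oplus)$ by axiom $(G3)$ (so it is a bijection) and that its inverse is $\text{gyr}[y,x]$ by Theorem \ref{the1.3}(9); applying the same decomposition to $\text{gyr}[y,x]$ shows the inverse is continuous too, which again yields that $\text{gyr}[x,y]$ is a homeomorphism.

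For (2) and (3), a homeomorphism and its inverse both send open sets to open sets and closed sets to closed sets; applying this to $\text{gyr}[x,y]$ and to $\text{gyr}[y,x]=\text{gyr}[x,y]^{-1}$ gives the stated equivalences immediately. There is essentially no real obstacle in this proof: the only nontrivial ingredient is recognizing the decomposition furnished by Theorem \ref{the1.3}(6), and once that is in hand everything reduces to the already-established Proposition \ref{pro23}. The one point to be careful about is to route the continuity of the inverse through Theorem \ref{the1.3}(6) or (9) rather than attempting a direct neighborhood-chasing argument, which would be needlessly cumbersome.
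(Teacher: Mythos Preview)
Your proof is correct. The paper itself does not supply a proof of this proposition (it is quoted from \cite{JX2}), so there is nothing to compare against here; your decomposition $\text{gyr}[x,y]=L_{\ominus(x\oplus y)}\circ L_{x}\circ L_{y}$ via Theorem~\ref{the1.3}(6) together with Proposition~\ref{pro23}(1) is exactly the natural argument and is valid.
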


We characterize the families of subsets of a gyrogroup $G$ that can serve
as neighborhood bases of the identity 0 in some paratopological gyrogroups.
For $T_0$ paratopological gyrogroups, the following characterizations are evident.

\begin{theorem}\label{the1}
Let $G$ be a paratopological gyrogroup and let $\mathcal{U}$
be an open base at the identity element 0 in $G$. Then, the following properties hold:
\begin{enumerate}
\item[(1)] for every $U\in\mathcal{U}$, there exists an element $V\in \mathcal{U}$ such that $V\oplus V\subseteq U$;
\item[(2)] for every $U\in\mathcal{U}$, and every $x\in U$, there exists $V\in \mathcal{U}$ such that $x\oplus V\subseteq U$;
\item[(3)] for each $U\in\mathcal{U}$ and $x, y\in G$,
    there exists $V \in\mathcal{U}$ such that $\ominus(x\oplus y)\oplus ((V\oplus x)\oplus y)\subseteq U$
    and $V\subseteq \ominus(x\oplus y)\oplus ((U\oplus x)\oplus y)$;
\item[(4)] for $U, V\in\mathcal{U}$, there exists $W\in\mathcal{U}$ such that $W\subseteq U\cap V$.
\end{enumerate}
\end{theorem}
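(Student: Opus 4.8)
The plan is to verify each of the four stated properties directly from the axioms of a paratopological gyrogroup together with the continuity of the binary operation $\oplus$, exactly as one does for paratopological groups, but inserting gyroautomorphisms where the group computation would have used associativity. For (1), I would use that $\oplus : G\times G\to G$ is continuous at $(0,0)$: since $0\oplus 0 = 0 \in U$, there are open neighborhoods $V_1,V_2$ of $0$ with $V_1\oplus V_2\subseteq U$, and then $V:=V_1\cap V_2\in\mathcal U$ (after shrinking to a basic element, using that $\mathcal U$ is a base) works. For (2), fix $U\in\mathcal U$ and $x\in U$; by Proposition \ref{pro23}(1) the left translation $L_x$ is a homeomorphism, and $x\oplus 0 = x\in U$, so $L_x^{-1}(U)$ is an open neighborhood of $0$; choosing $V\in\mathcal U$ with $V\subseteq L_x^{-1}(U)$ gives $x\oplus V\subseteq U$.

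For (4), this is immediate: since $\mathcal U$ is an \emph{open base} at $0$ and $U\cap V$ is an open neighborhood of $0$, there is $W\in\mathcal U$ with $W\subseteq U\cap V$. So the only substantive item is (3), which is the gyrogroup analogue of the ``$xVx^{-1}\subseteq U$'' conjugation condition for paratopological groups. Here I would fix $U\in\mathcal U$ and $x,y\in G$ and consider the map $\varphi_{x,y}:G\to G$ defined by $\varphi_{x,y}(v) = \ominus(x\oplus y)\oplus((v\oplus x)\oplus y)$. The key observation is that $\varphi_{x,y}$ is a homeomorphism of $G$ with $\varphi_{x,y}(0) = 0$: indeed $\varphi_{x,y} = L_{\ominus(x\oplus y)}\circ R_y\circ R_x$, which is a composition of homeomorphisms by Proposition \ref{pro23}(1) and Proposition \ref{pro2.14s}(1), and $\varphi_{x,y}(0)=\ominus(x\oplus y)\oplus((0\oplus x)\oplus y)=\ominus(x\oplus y)\oplus(x\oplus y)=0$ by (G2). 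Consequently $\varphi_{x,y}(U)$ and $\varphi_{x,y}^{-1}(U)$ are open neighborhoods of $0$; picking $V_1\in\mathcal U$ with $V_1\subseteq\varphi_{x,y}(U)$ gives the second inclusion $V_1\subseteq\ominus(x\oplus y)\oplus((U\oplus x)\oplus y)$, and picking $V_2\in\mathcal U$ with $V_2\subseteq\varphi_{x,y}^{-1}(U)$ gives $\varphi_{x,y}(V_2)\subseteq U$, i.e.\ $\ominus(x\oplus y)\oplus((V_2\oplus x)\oplus y)\subseteq U$. Finally, using (4) I would take $V\in\mathcal U$ with $V\subseteq V_1\cap V_2$ to obtain a single $V$ satisfying both inclusions.

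The main obstacle is really just bookkeeping: confirming that $\varphi_{x,y}$ factors as the asserted composition of left and right translations (so that the homeomorphism property is inherited from Propositions \ref{pro23} and \ref{pro2.14s}), and checking $\varphi_{x,y}(0)=0$ carefully using the gyrogroup identity element axiom rather than group cancellation. One should be slightly careful that $R_x$ and $R_y$ denote right translations $R_a(v)=v\oplus a$, so that $R_y(R_x(v)) = (v\oplus x)\oplus y$, matching the inner part of $\varphi_{x,y}$; no gyroautomorphism cleanup is needed here because the expression is written with the parenthesization fixed, and we are not trying to ``simplify'' it. Nothing deeper is required — the statement is genuinely the routine ``necessary conditions'' half of a characterization of neighborhood bases, and all four clauses reduce to continuity of $\oplus$ plus the homeomorphism lemmas already proved in the excerpt.
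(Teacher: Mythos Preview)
Your proposal is correct and follows essentially the same approach as the paper's proof: the paper also handles (1) via continuity of $\oplus$ at $(0,0)$, (2) via continuity of $L_x$, (4) as immediate from the definition of a base, and (3) via the very same composite map $L_{\ominus(x\oplus y)}\circ R_y\circ R_x$, finding separate $V_1,V_2$ for the two inclusions and then intersecting. The only cosmetic difference is that you explicitly invoke Proposition~\ref{pro2.14s}(1) to call $\varphi_{x,y}$ a homeomorphism, whereas the paper merely asserts continuity and that the image set is open; your formulation is slightly cleaner but not materially different.
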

\begin{proof}
Let $U\in\mathcal{U}$.

(1) Let $x, y\in G$. For $G$ is a paratopological gyrogroup, then $op_2:G\times G\rightarrow G$ defined by $op_2(x,y)=x\oplus y$ is continuous.
Because $0\oplus 0\in U$, there exist $O, W\in\mathcal{U}$ such that $O\oplus W\subseteq U$.
Given that $\mathcal{U}$ is an open neighborhood of 0, there exists $V\in\mathcal{U}$ such that $V\subseteq O\cap W$.
Then $V\oplus V\subseteq W$.

(2) Let $x\in U$.
We define $L_x:G\rightarrow G$ by $L_x(y)=x\oplus y$.
Since $L_x(0)=x$ and $L_x$ is continuous at $0$, there exists $V\in\mathcal{U}$ such that $x\oplus V=L_x(V)\subseteq U$.

(3) Let $x, y\in G$. Since $G$ is a paratopological gyrogroup, the function $L_{\ominus(x\oplus y)}\circ R_y\circ R_x$ is continuous. Note that $L_{\ominus(x\oplus y)}\circ R_y\circ R_x(0)=\ominus(x\oplus y)\oplus ((0\oplus x)\oplus y)=0\in U$,
there exists $V_1\in\mathcal{U}$ such that $\ominus(x\oplus y)\oplus ((V_1\oplus x)\oplus y)\subseteq U$.
Note also that $\ominus(x\oplus y)\oplus ((U\oplus x)\oplus y)$ is an open
neighborhood of 0.
So there exists $V_2\in\mathcal{U}$ such that $V_2\subseteq \ominus(x\oplus y)\oplus ((U\oplus x)\oplus y)$.
Since $\mathcal{U}$ is an open base at 0, there exists $V\in\mathcal{U}$ such that $V\subseteq V_1\cap V_2$,
and the assertion
follows.

(4) It is clear since $\mathcal{U}$ is an open base at $0.$
\end{proof}

\begin{theorem}\label{the}
Let \( G \) be a gyrogroup, and let \( \mathcal{U} \) be a family of subsets of \( G \) that includes the identity element \( 0 \). Assume that \( \mathcal{U} \) satisfies the following conditions (1)-(5).
\begin{enumerate}
\item[(1)] for every $U\in\mathcal{U}$, there exists an element $V\in \mathcal{U}$ such that $V\oplus V\subseteq U$;
\item[(2)] for every $U\in\mathcal{U}$, and every $x\in U$, there exists $V\in \mathcal{U}$ such that $x\oplus V\subseteq U$;
\item[(3)] for each $U\in\mathcal{U}$ and $x, y\in G$,
    there exists $V \in\mathcal{U}$ such that $\ominus(x\oplus y)\oplus ((V\oplus x)\oplus y)\subseteq U$
    and $V\subseteq \ominus(x\oplus y)\oplus ((U\oplus x)\oplus y)$;
\item[(4)] for $U, V\in\mathcal{U}$, there exists $W\in\mathcal{U}$ such that $W\subseteq U\cap V$;
\item[(5)] for every $U\in\mathcal{U}$ and $x, y\in G$, we have \text{gyr}$[x, y]U\subseteq U$.
\end{enumerate}
\end{theorem}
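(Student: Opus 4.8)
The plan is to show that the family $\mathcal{B}=\{x\oplus U: x\in G,\ U\in\mathcal{U}\}$ is a base for a topology $\tau$ on $G$ for which $(G,\tau,\oplus)$ is a strongly paratopological gyrogroup and $\mathcal{U}$ is a base of open neighbourhoods of the identity $0$; concretely, $\tau=\{O\subseteq G:\forall x\in O\ \exists U\in\mathcal{U}\text{ with }x\oplus U\subseteq O\}$. The single algebraic fact carrying the whole argument is this: by (5) and Proposition~\ref{pro2.2}, every $U\in\mathcal{U}$ satisfies $\text{gyr}[x,y]U=U$ for all $x,y\in G$, and therefore, by the left gyroassociative law (Theorem~\ref{the1.3}(2)) together with inversive symmetry (Theorem~\ref{the1.3}(9)),
\[
p\oplus(q\oplus U)=(p\oplus q)\oplus U\qquad\text{for all }p,q\in G,\ U\in\mathcal{U}.
\]
I would prove this \emph{neighbourhood identity} (both inclusions) first; it is what replaces ordinary associativity in the classical paratopological-group argument.

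Next I would dispatch the routine points. (i) $\tau$ is a topology: only closure under finite intersections needs anything, and it follows from (4). (ii) Every $x\oplus U$ is $\tau$-open: for $x\oplus u\in x\oplus U$, use (2) to pick $U'\in\mathcal{U}$ with $u\oplus U'\subseteq U$; then $(x\oplus u)\oplus U'=x\oplus(u\oplus U')\subseteq x\oplus U$ by the neighbourhood identity. (iii) $\mathcal{B}$ is a base for $\tau$ — each $O\in\tau$ is the union of the sets $x\oplus U_x$, $x\in O$, with $x\oplus U_x\subseteq O$, using $x=x\oplus0\in x\oplus U_x$; and since $U=0\oplus U\in\mathcal{B}$, every $U\in\mathcal{U}$ is an open neighbourhood of $0$, so $\mathcal{U}$ is a neighbourhood base at $0$. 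Left translations are homeomorphisms of $(G,\tau)$ by the neighbourhood identity and Proposition~\ref{the3.22}, though this is not strictly needed below.

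The two substantive steps are (a) that $U\oplus a$ is a $\tau$-neighbourhood of $a$ for every $U\in\mathcal{U}$ and $a\in G$, and (b) joint continuity of $\oplus$. For (a) I would apply condition~(3) with $x=a$, $y=0$ to $U$: its second clause yields $V\in\mathcal{U}$ with $V\subseteq\ominus a\oplus(U\oplus a)$, so $a\oplus V\subseteq a\oplus(\ominus a\oplus(U\oplus a))=U\oplus a$ by the left cancellation law (Theorem~\ref{the1.3}(3)); as $a\oplus V$ is open by (ii) and contains $a$, the claim follows. For (b), let $a,b\in G$ and let $O$ be open with $a\oplus b\in O$; pick $U\in\mathcal{U}$ with $(a\oplus b)\oplus U\subseteq O$, then $V\in\mathcal{U}$ with $V\oplus V\subseteq U$ by (1), and then $W\in\mathcal{U}$ with $\ominus(a\oplus b)\oplus((W\oplus a)\oplus b)\subseteq V$ by condition~(3) with $x=a$, $y=b$; equivalently $(W\oplus a)\oplus b\subseteq(a\oplus b)\oplus V$. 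Taking the neighbourhood $W\oplus a$ of $a$ from (a) and the open neighbourhood $b\oplus V$ of $b$, the neighbourhood identity (used twice, with the set always on the left) gives
\[
(W\oplus a)\oplus(b\oplus V)=\bigl((W\oplus a)\oplus b\bigr)\oplus V\subseteq\bigl((a\oplus b)\oplus V\bigr)\oplus V=(a\oplus b)\oplus(V\oplus V)\subseteq(a\oplus b)\oplus U\subseteq O,
\]
so $\oplus$ is continuous at $(a,b)$. Then $(G,\tau,\oplus)$ is a strongly paratopological gyrogroup, since $\mathcal{U}$ is a neighbourhood base at $0$ consisting of $\text{gyr}$-invariant sets; and $\tau$ is the unique such topology, because in any paratopological gyrogroup left translations are homeomorphisms (Proposition~\ref{pro23}), so the left translates of $\mathcal{U}$ already determine the topology.

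I expect the crux to be step (b), and within it the decision to present the neighbourhood of $a$ in the right-translated form $W\oplus a$ rather than $a\oplus W$: with $a\oplus W$ one is forced to reassociate $(a\oplus w)\oplus b=a\oplus(w\oplus\text{gyr}[w,a]b)$, and the gyration $\text{gyr}[w,a]b$ varies with $w$ and cannot be absorbed into a member of $\mathcal{U}$; the form $W\oplus a$, on the other hand, is exactly what condition~(3) was designed to control, and it is condition~(3)'s second clause that certifies $W\oplus a$ as a genuine neighbourhood. The remaining difficulty is bookkeeping: making sure the neighbourhood identity is applied correctly (as a union of pointwise identities) across the several steps of the computation, and keeping track of which of the five hypotheses is invoked where.
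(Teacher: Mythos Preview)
Your proposal is correct and follows the same route as the paper's proof: define $\tau$ identically, verify openness of $x\oplus U$ via (2) and the gyro-invariance (5), and establish joint continuity by combining (1), (3) and the associativity identity $p\oplus(q\oplus U)=(p\oplus q)\oplus U$. Your organization is marginally cleaner --- you isolate that identity up front and use the right-translate $W\oplus a$ directly as a neighbourhood of $a$ (via the second clause of~(3) with $y=0$), whereas the paper takes the left-translate $x\oplus U$ and invokes (3) once more to squeeze it inside a set of the form $U_1\oplus x$ --- but the substance is the same.
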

Then the family $\mathcal{B}=\{a\oplus U:a\in G, U\in \mathcal{U}\}$ constitutes a base for a topology \( \mathcal{T} \) on \( G \).
With this topology, $G$ is a paratopological gyrogroup.

\begin{proof}
Let $\mathcal{U}$ be a family of subsets of $G$ such that conditions (1)-(5) hold.
Define
 $$\mathcal{T}=\{W\subseteq G: \text{for every~} x\in W~\text{there exists~} U\in\mathcal{U}~\text{such that~} x\oplus U\subseteq W\}.\quad\quad\quad (6)$$

{\bf Claim 1.} $\mathcal{T}$ is a topology on $G$.

It is clear that $G\in\mathcal{T}$ and $\emptyset\in\mathcal{T}$.
It also easy to see that $\mathcal{T}$ is closed under unions.
To show that $\mathcal{T}$ is closed under finite intersections, let $V,W\in\mathcal{T}$.
Let $x\in V\cap W$. Since $x\in V\in\mathcal{T}$ and $x\in W\in\mathcal{T}$, there exist $O,Q\in\mathcal{U}$ such that $x\oplus O\subseteq V$ and $x\oplus Q\subseteq W$.
From (4) it follows that there exists $U\in\mathcal{T}$ such that $U\subseteq O\cap Q$.
Then, we have $x\oplus U\subseteq V\cap W$.
Hence, $V\cap W\in\mathcal{T}$, and $\mathcal{T}$ is a topology on $G$.

{\bf Claim 2.} If $O\in \mathcal{U}$ and $g\in G$, then $g\oplus O\in\mathcal{T}$.

Take any $x\in g\oplus O$, then $\ominus g\oplus x\in O$.
By property (2), there exists $V\in\mathcal{U}$ such that $(\ominus g\oplus x)\oplus V\subseteq O$.
For $V$ and $\ominus g, x\in G$, we have $\text{gyr}[\ominus g,x]V\subseteq V$ by condition (5).
So we have $\ominus g\oplus(x\oplus V)=(\ominus g\oplus x)\oplus \text{gyr}[\ominus g,x]V\subseteq
(\ominus g\oplus x)\oplus V\subseteq O$,
that is $x\oplus V\subseteq g\oplus O$.
Hence $g\oplus O\in\mathcal{T}$.

{\bf Claim 3.} The family $\mathcal{B}=\{a\oplus U:a\in G, U\in \mathcal{U}\}$ is a base for the topology $\mathcal{T}$ on $G$.

The assertion is directly derived from the definition of $\mathcal{T}$ and the conditions specified in Claim 2.

{\bf Claim 4.} The multiplication in $G$ is continuous with respect to the topology $\mathcal{T}$.

By condition (3), for each $V\in\mathcal{U}$ and $x, y\in G$,
there exists $U_1 \in\mathcal{U}$ such that $\ominus(x\oplus y)\oplus ((U_1\oplus x)\oplus y)\subseteq V$
and $U_1\subseteq \ominus(x\oplus y)\oplus ((V\oplus x)\oplus y)$.
That is $((U_1\oplus x)\oplus y)\subseteq (x\oplus y)\oplus V$ and $ (x\oplus y)\oplus U_1\subseteq ((V\oplus x)\oplus y)$.
By setting \( y = 0 \), we obtain \( x \oplus U_1 \subseteq V\oplus x \).
Then there exists $U\in\mathcal{U}$ such that$(x \oplus U)\oplus y\subseteq(U_1\oplus x)
\oplus y\subseteq (x\oplus y)\oplus V$.

Let $x,y\in G$, and $O\in\mathcal{T}$ with $x\oplus y\in O$.
Then there exists $W\in\mathcal{U}$ such that $(x\oplus y)\oplus W\subseteq O$.
By condition (1) there exists $V\in\mathcal{U}$ such that $V\oplus V\subseteq W$.
By condition (3), there exists $U\in\mathcal{U}$ such that
$(x \oplus U)\oplus y\subseteq (x\oplus y)\oplus V$.
According to Claim 2, $x\oplus U, y\oplus V \in\mathcal{T}$.
A straightforward calculation demonstrates that
\begin{align*}
&(x \oplus U) \oplus (y \oplus V)
= \{(x \oplus u) \oplus (y \oplus v) \mid u \in U, v \in V\} \
\\&= \{((x \oplus u) \oplus y) \oplus \text{gyr}[x \oplus u, y](v)\mid u \in U, v \in V\} \quad \text{Left Gyroassociative Law}
\\&\subseteq \{((x \oplus u) \oplus y) \oplus v \mid u \in U, v \in V\}\quad\quad\quad\quad \text{by Condition(5)~}
\\&= ((x \oplus U) \oplus y) \oplus V \
\\&\subseteq ((x \oplus y) \oplus V) \oplus V \quad\quad\quad\quad \text{by Condition(3)~}
\\&= \{((x \oplus y) \oplus v_1) \oplus v_2 \mid v_1, v_2 \in V\} \
\\&= \{(x \oplus y) \oplus (v_1 \oplus \text{gyr}[v_1, x\oplus y](v_2)) \mid v_1, v_2 \in V\} \quad \text{Right Gyroassociative Law}
\\&\subseteq (x \oplus y) \oplus (V \oplus V) \quad\quad\quad\quad \text{by Condition(5)~}
\\&\subseteq (x \oplus y) \oplus W \quad\quad\quad\quad \text{by Condition(1)~}
\\&\subseteq O.
\end{align*}
This proves Claim 4.

We have proved that $G$ is a paratopological gyrogroup with the topology $\mathcal{T}$.
\end{proof}

\begin{corollary}
If \( G \) is a gyrogroup that satisfies Theorem \ref{the}, then \( G \) becomes a strongly paratopological gyrogroup with respect to the topology defined by Equation (6).
\end{corollary}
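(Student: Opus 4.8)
The plan is to read off both requirements in the definition of a strongly paratopological gyrogroup directly from the data assembled in the proof of Theorem \ref{the}: that the given family $\mathcal{U}$ is itself a neighbourhood base at the identity $0$ for the topology $\mathcal{T}$ defined by Equation (6), and that every member of $\mathcal{U}$ is fixed setwise by all gyroautomorphisms. Since Theorem \ref{the} already yields that $(G,\mathcal{T},\oplus)$ is a paratopological gyrogroup, only these two points remain.

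First I would check that $\mathcal{U}$ is a neighbourhood base at $0$. Claim 2 of the proof of Theorem \ref{the}, applied with $g=0$, gives $0\oplus U=U\in\mathcal{T}$ for every $U\in\mathcal{U}$, and $0\in U$ by hypothesis; hence each $U\in\mathcal{U}$ is an open neighbourhood of $0$. Conversely, if $W\in\mathcal{T}$ with $0\in W$, then by the defining condition (6) of $\mathcal{T}$ there is $U\in\mathcal{U}$ with $U=0\oplus U\subseteq W$. Thus $\mathcal{U}$ is a base of open neighbourhoods of the identity.

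Finally, condition (5) of Theorem \ref{the} asserts $\text{gyr}[x,y]U\subseteq U$ for every $U\in\mathcal{U}$ and all $x,y\in G$; by Proposition \ref{pro2.2} this inclusion is automatically an equality, so $\text{gyr}[x,y]U=U$ for all $x,y\in G$. Combining the two paragraphs, $\mathcal{U}$ is a neighbourhood base at $0$ all of whose members satisfy $\text{gyr}[x,y]U=U$ for every $x,y\in G$, which is precisely the definition of a strongly paratopological gyrogroup. I expect no real obstacle here: the corollary is essentially bookkeeping on top of Theorem \ref{the} and its proof, the only step worth flagging being the appeal to Proposition \ref{pro2.2} to upgrade the inclusion in condition (5) to the required equality.
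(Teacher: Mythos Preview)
Your proposal is correct and follows the same route as the paper's one-line proof, which simply observes that the family $\mathcal{U}$ from Theorem \ref{the} is a basis of open sets at $0$ satisfying Definition \ref{defst}. You have filled in the details the paper leaves implicit---in particular, your explicit appeal to Proposition \ref{pro2.2} to upgrade the inclusion in condition (5) to the equality required by Definition \ref{defst} is a point the paper glosses over.
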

\begin{proof}
The family \( \mathcal{U} \) in Theorem \ref{the} is a basis of open sets at \( 0 \) that satisfies Definition \ref{defst}.
\end{proof}

\begin{lemma}\label{lem2.21}\cite{JX2}
Let the neighborhood base
$\mathcal{U}$ at 0 of $G$ witness that $G$ is a strongly paratopological gyrogroup.
Then we have
$(a\oplus U)\oplus W= a\oplus (U\oplus W)$ for each $a\in G, U, W\in \mathcal{U}.$
\end{lemma}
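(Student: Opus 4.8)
The plan is to prove this set identity directly, by rewriting each element $(a\oplus u)\oplus w$ of $(a\oplus U)\oplus W$ via the right gyroassociative law and then exploiting the gyr-invariance of the members of $\mathcal{U}$. Concretely, first I would note that $(a\oplus U)\oplus W=\{(a\oplus u)\oplus w:u\in U,\ w\in W\}$, and that by the right gyroassociative law (Theorem \ref{the1.3}) one has $(a\oplus u)\oplus w=a\oplus(u\oplus\text{gyr}[u,a]w)$ for all $u\in U$, $w\in W$. Hence $(a\oplus U)\oplus W=\{a\oplus(u\oplus\text{gyr}[u,a]w):u\in U,\ w\in W\}$.

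Next I would fix $u\in U$ and let $w$ range over $W$. Since the base $\mathcal{U}$ witnesses that $G$ is a strongly paratopological gyrogroup, $\text{gyr}[u,a](W)=W$, so $\{a\oplus(u\oplus\text{gyr}[u,a]w):w\in W\}=\{a\oplus(u\oplus t):t\in W\}=a\oplus(u\oplus W)$. Taking the union over all $u\in U$ and using that the set operation $S\mapsto a\oplus S$ (a bijection, by Proposition \ref{pro23}(1)) commutes with unions, we obtain $(a\oplus U)\oplus W=\bigcup_{u\in U}\bigl(a\oplus(u\oplus W)\bigr)=a\oplus\bigl(\bigcup_{u\in U}(u\oplus W)\bigr)=a\oplus(U\oplus W)$, which is the desired equality.

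The only point requiring a little care is the order of operations with the two unions: the gyroautomorphism $\text{gyr}[u,a]$ depends on $u$, so the gyr-invariance of $W$ must be applied for each fixed $u$ separately, and one cannot factor a single gyroautomorphism out of the whole double union. Apart from this bookkeeping the argument is a straightforward unwinding of the definitions, so I do not anticipate any genuine obstacle.
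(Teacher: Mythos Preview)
Your argument is correct: the right gyroassociative law gives $(a\oplus u)\oplus w=a\oplus(u\oplus\text{gyr}[u,a]w)$, and the strong hypothesis $\text{gyr}[u,a](W)=W$ for each fixed $u$ lets you replace $\text{gyr}[u,a]w$ by an arbitrary element of $W$, after which taking the union over $u\in U$ yields the claimed equality. The paper does not supply its own proof of this lemma---it is quoted from \cite{JX2}---but your approach is the natural one and is exactly the computation one would expect.
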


\begin{lemma}\label{lem19}\cite{JX2}
Let the neighborhood base
$\mathcal{U}$ at 0 of $G$ witness that $G$ is a strongly paratopological gyrogroup.
If $U\oplus V\subseteq W$, then $\ominus V\ominus U\subseteq \ominus W$, for each $W, U, V\in\mathcal{U}$.
\end{lemma}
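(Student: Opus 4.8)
The plan is to reduce the claim to the Gyrosum Inversion identity (Theorem~\ref{the1.3}(7)) together with the defining property of a strongly paratopological gyrogroup. Fix $u\in U$ and $v\in V$; the goal is to show $(\ominus v)\oplus(\ominus u)\in\ominus W$, after which the conclusion $\ominus V\ominus U\subseteq\ominus W$ follows since $u,v$ are arbitrary.

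First I would set $w:=u\oplus v$, which lies in $W$ because $U\oplus V\subseteq W$. By Gyrosum Inversion, $\ominus w=\ominus(u\oplus v)=\text{gyr}[u,v](\ominus v\ominus u)$. Applying $\text{gyr}^{-1}[u,v]=\text{gyr}[v,u]$ (Theorem~\ref{the1.3}(9)) to both sides gives $\ominus v\ominus u=\text{gyr}[v,u](\ominus w)$, expressing the element in question as a gyroautomorphic image of an element of $\ominus W$.

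Next I would invoke that the base $\mathcal{U}$ witnesses $G$ being strongly paratopological: since $v,u\in G$ and $W\in\mathcal{U}$, we have $\text{gyr}[v,u](W)=W$. Combining this with Theorem~\ref{the1.3}(8), which gives $\text{gyr}[v,u](\ominus x)=\ominus\,\text{gyr}[v,u](x)$ and hence $\text{gyr}[v,u](\ominus W)=\ominus\,\text{gyr}[v,u](W)=\ominus W$ setwise, we conclude $\ominus v\ominus u=\text{gyr}[v,u](\ominus w)\in\text{gyr}[v,u](\ominus W)=\ominus W$.

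The argument is essentially bookkeeping; the only point requiring care is matching the orientation of the gyroautomorphisms — Gyrosum Inversion produces $\text{gyr}[u,v]$, so one must pass to its inverse $\text{gyr}[v,u]$ before applying the strongly-paratopological invariance, and one must check that this invariance is being used with arguments $v,u\in G$ (which it is). No continuity or topology beyond the existence of the distinguished base $\mathcal{U}$ enters the proof.
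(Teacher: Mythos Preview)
Your proof is correct. The paper does not supply its own argument for this lemma---it is quoted from \cite{JX2} without proof---so there is no in-paper proof to compare against; your use of Gyrosum Inversion together with the gyroinvariance of $W\in\mathcal{U}$ is exactly the natural route and every step checks out.
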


Let's review the notion of coset spaces pertaining to a paratopological gyrogroup.

Consider a paratopological gyrogroup $(G, \tau, \oplus)$ with $H$ being an $L$-subgyrogroup of $G$.
As derived from \cite[Theorem 20]{Suk3}, the set $G/H = \{a \oplus H : a \in G\}$ constitutes a division of $G$ into distinct parts. The function $\pi$, which maps each element $a\in G$ to the coset $a \oplus H$ onto $G/H$, ensures that the preimage $\pi^{-1}(\pi(a))=a \oplus H$. We refer to $\tau(G)$ as the topology on $G$. In defining a topology $\widetilde{\tau}$ on the set of left cosets $G/H$ of the gyrogroup $G$, we employ $\widetilde{\tau}=\{O \subseteq G/H : \pi^{-1}(O)\in \tau(G)\}$, with $\tau(G)$ indicating the established topology of $G$.

\begin{proposition}\label{pro2.28}\cite{JX2}
Let $(G, \tau,\oplus)$ be a paratopological gyrogroup and $H$ a $L$-subgyrogroup of $G$.
Then the natural homomorphism
$\pi$ from a paratopological gyrogroup $G$ to its quotient topology on $G/H$ is an open and continuous mapping.
\end{proposition}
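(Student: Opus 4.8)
The plan is to establish the two assertions---continuity and openness of $\pi$---separately, using the explicit description of the quotient topology $\widetilde{\tau}=\{O\subseteq G/H:\pi^{-1}(O)\in\tau(G)\}$ recalled just above, together with the structural fact (from \cite[Theorem 20]{Suk3}, which applies precisely because $H$ is an $L$-subgyrogroup, so the left cosets $\{a\oplus H:a\in G\}$ partition $G$) that $\pi^{-1}(\pi(a))=a\oplus H$ for every $a\in G$.

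Continuity is then immediate: by the very definition of $\widetilde{\tau}$, if $O\in\widetilde{\tau}$ then $\pi^{-1}(O)\in\tau(G)$. For openness, I fix an open set $U\in\tau(G)$ and aim to show that $\pi(U)\in\widetilde{\tau}$, i.e.\ that $\pi^{-1}(\pi(U))\in\tau(G)$. Since taking preimages commutes with unions and $\pi(U)=\bigcup_{u\in U}\{\pi(u)\}$, I obtain
$$\pi^{-1}(\pi(U))=\bigcup_{u\in U}\pi^{-1}(\pi(u))=\bigcup_{u\in U}(u\oplus H)=\bigcup_{h\in H}(U\oplus h),$$
where the last two unions both equal $\{u\oplus h:u\in U,\ h\in H\}$. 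By Proposition \ref{pro2.14s}(1) each right translation $R_h$ is a homeomorphism of $G$, so $U\oplus h=R_h(U)$ is open; hence $\pi^{-1}(\pi(U))$ is a union of open sets and therefore open. This finishes the argument, granted the coset identity $\pi^{-1}(\pi(u))=u\oplus H$.

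That identity, equivalently $(u\oplus h)\oplus H=u\oplus H$ for all $u\in G$ and $h\in H$, is the only substantive point and the only place where the hypothesis on $H$ enters; it is contained in the partition statement of \cite{Suk3}, but if a self-contained verification is preferred one argues directly. By the right gyroassociative law (Theorem \ref{the1.3}(1)), $(u\oplus h)\oplus k=u\oplus(h\oplus\text{gyr}[h,u]k)$ for every $k\in H$; since $H$ is an $L$-subgyrogroup, $\text{gyr}[u,h](H)=H$, whence $\text{gyr}[h,u](H)=\text{gyr}[u,h]^{-1}(H)=H$ by inversive symmetry (Theorem \ref{the1.3}(9)); and $h\oplus H=H$ because $\ominus h\in H$ and $h\oplus(\ominus h\oplus k)=k$ (Theorem \ref{the1.3}(3)). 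Combining these three facts yields $(u\oplus h)\oplus H=u\oplus(h\oplus\text{gyr}[h,u]H)=u\oplus(h\oplus H)=u\oplus H$. Everything else in the proof is purely formal, so I do not anticipate any genuine obstacle beyond this bookkeeping with cosets and gyroautomorphisms.
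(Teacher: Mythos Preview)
The paper does not supply its own proof of this proposition; it merely cites \cite{JX2}. There is therefore nothing in the present paper to compare your argument against. That said, your proof is correct and is exactly the standard argument one would expect in this situation: continuity is immediate from the definition of the quotient topology, and openness follows by computing the saturation $\pi^{-1}(\pi(U))=U\oplus H=\bigcup_{h\in H}R_h(U)$ and invoking the fact (Proposition~\ref{pro2.14s}) that right translations are homeomorphisms. Your supplementary verification of the coset identity $(u\oplus h)\oplus H=u\oplus H$ via the right gyroassociative law and the $L$-subgyrogroup hypothesis is also sound, and makes explicit precisely where that hypothesis is used.
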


\section{The Hausdorff number of a paratopological gyrogroup}

Consider $G$ as a Hausdorff paratopological gyrogroup with the identity 0. For any element $x\in G$, distinct from 0, there exists an open neighborhood $V$ of 0 such that $V \cap (x \oplus V)= \emptyset$. This implies that $x\notin V\boxminus V$. Consequently, $\bigcap_{U\in\mathcal{U}} (U\boxminus U)=\{0\}$, where $\mathcal{U}$ denotes the family of open neighborhoods of the identity 0 in $G$.
Motivated by this observation, we define the {\it Hausdorff number} of $G$, denoted by $Hs(G)$, as the
minimum cardinal number $\kappa$ such that for every neighbourhood $U$ of 0 in $G$,
there exists a family $\gamma$ of neighbourhoods of 0 such that $\bigcap_{V\in\gamma} (V\boxminus V)\subseteq U$ and $|\gamma|\leq\kappa$.
A paratopological gyrogroup $G$ is $T_1$ if and only if for each $x\in G\setminus\{0\}$, there exists $V\in\mathcal{U}$ such that $x\notin V$ or, equivalently, $\bigcap_{V\in\gamma} (\ominus V)=\{0\}$. Motivated by this observation, we define the {\it weakly Hausdorff number} of a $T_1$ paratopological gyrogroup $G$, denoted by $wHs(G)$, as the minimum cardinal number $\kappa$ such that for for every neighborhood $U$ of 0 in $G$ there exists a family $\gamma$ of neighborhoods of 0 such that $\bigcap_{V\in\gamma} (\ominus V)\subseteq U$ and $|\gamma|\leq\kappa$.

In \cite{JX2}, the authors proved the following conclusion:
\begin{lemma}\cite[Lemma 3.4]{JX2}\label{pro3.1}
Let the neighborhood base
$\mathcal{U}$ at 0 of $G$ witness that $G$ is a strongly paratopological gyrogroup.
Then $G$ is a strongly topological gyrogroup if and only if the inverse operation is continuous at the identity 0.
\end{lemma}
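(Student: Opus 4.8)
The plan is to prove the two implications separately. The forward direction is immediate: if $G$ is a strongly topological gyrogroup, then by Definition~\ref{defst} it is in particular a topological gyrogroup, so the inversion $\ominus(\cdot):G\to G$ is continuous on all of $G$, hence at $0$.

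For the reverse direction, assume $\ominus(\cdot)$ is continuous at $0$. Since the base $\mathcal{U}$ in the hypothesis already witnesses the ``strong'' condition of Definition~\ref{defst}, it suffices to upgrade $G$ to a \emph{topological} gyrogroup, i.e.\ to show $\ominus(\cdot)$ is continuous at every $a\in G$. The key algebraic move is to rewrite $\ominus x$ so that the fixed ``large'' element $\ominus a$ sits on the outside and a ``small'' element (one that tends to $0$ as $x\to a$) sits inside a gyration:
\begin{align*}
\ominus x &= \ominus\bigl((x\ominus a)\boxplus a\bigr) \quad\text{by Theorem~\ref{the1.3}(4)}\\
&= (\ominus a)\boxplus\bigl(\ominus(x\ominus a)\bigr) \quad\text{by Theorem~\ref{the1.3}(10)}\\
&= (\ominus a)\oplus \text{gyr}[\ominus a,\, x\ominus a]\bigl(\ominus(x\ominus a)\bigr) \quad\text{by Definition~\ref{defbox}},
\end{align*}
where in the last line we also used $\ominus(\ominus(x\ominus a))=x\ominus a$.

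Now fix $a\in G$ and an open set $W$ with $\ominus a\in W$. First I would use that $L_{a}$ is a homeomorphism (Proposition~\ref{pro23}(1)) sending $\ominus a$ to $0$, together with $\mathcal{U}$ being a neighborhood base at $0$, to choose $V\in\mathcal{U}$ with $\ominus a\oplus V\subseteq W$; since $V\in\mathcal{U}$ is a member of the witnessing base, $\text{gyr}[x,y](V)=V$ for all $x,y\in G$. Next, continuity of $\ominus(\cdot)$ at $0$ gives a neighborhood $O_{0}$ of $0$ with $\ominus O_{0}\subseteq V$, and since $R_{\ominus a}$ is a homeomorphism (Proposition~\ref{pro2.14s}(1)) sending $a$ to $a\ominus a=0$, there is a neighborhood $O$ of $a$ with $x\ominus a\in O_{0}$ for all $x\in O$. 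For such $x$ we get $\ominus(x\ominus a)\in V$, hence $\text{gyr}[\ominus a,\,x\ominus a]\bigl(\ominus(x\ominus a)\bigr)\in V$ by gyration-invariance, and therefore, by the displayed identity, $\ominus x\in\ominus a\oplus V\subseteq W$. Thus $\ominus O\subseteq W$, so $\ominus(\cdot)$ is continuous at $a$; as $a$ was arbitrary, $G$ is a topological gyrogroup, and the base $\mathcal{U}$ makes it strongly topological.

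The only genuinely delicate point I anticipate is the gyration $\text{gyr}[\ominus a,\,x\ominus a]$, whose second argument moves with $x$ and therefore cannot be pinned down by continuity alone; this is exactly where the ``strongly paratopological'' hypothesis enters, via the gyration-invariance of every member of $\mathcal{U}$, which lets the gyration be absorbed harmlessly into $V$. Everything else is routine bookkeeping with the fact that left and right translations are homeomorphisms (Propositions~\ref{pro23} and~\ref{pro2.14s}) and with $\mathcal{U}$ being a neighborhood base at $0$.
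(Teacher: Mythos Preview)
Your argument is correct. Note, however, that the present paper does not give its own proof of this lemma: it is quoted as \cite[Lemma~3.4]{JX2} and left unproved here, so there is nothing in this paper to compare your proof against. Your derivation of the identity
\[
\ominus x=(\ominus a)\oplus\text{gyr}[\ominus a,\,x\ominus a]\bigl(\ominus(x\ominus a)\bigr)
\]
via Theorem~\ref{the1.3}(4),(10) and Definition~\ref{defbox} is sound, and the subsequent use of the gyration-invariance of $V\in\mathcal{U}$ together with the homeomorphisms $L_a$ and $R_{\ominus a}$ (Propositions~\ref{pro23} and~\ref{pro2.14s}) gives a clean and complete proof of the nontrivial direction.
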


By the provided definition, one can deduce that a strongly paratopological gyrogroup $G$ qualifies as a strongly topological gyrogroup precisely when $wHs(G)=1$.

\begin{proposition}\label{pro3.2}
If $G$ is a Hausdorff paratopological gyrogroup, then the inequality $wHs(G)\leq Hs(G)$ holds.
\end{proposition}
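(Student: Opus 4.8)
The plan is to show directly that any family $\gamma$ of neighborhoods of $0$ witnessing $\bigcap_{V\in\gamma}(V\boxminus V)\subseteq U$ already witnesses $\bigcap_{V\in\gamma}(\ominus V)\subseteq U$, which immediately gives $wHs(G)\leq Hs(G)$. The key observation is the pointwise inclusion $\ominus V\subseteq V\boxminus V$ for every neighborhood $V$ of $0$. Indeed, by Theorem~\ref{the1.3}(5) we have $(a\boxminus b)\oplus b=a$ for all $a,b$; equivalently $a\boxminus b = $ the unique solution of $x\oplus b=a$. Taking $a=0$ and $b=v$ for $v\in V$, the element $\ominus v$ satisfies $(\ominus v)\oplus v=0$, so $\ominus v = 0\boxminus v$. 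Since $0\in V$ (as $V$ is a neighborhood of the identity), $0\boxminus v\in V\boxminus V$, hence $\ominus v\in V\boxminus V$. Thus $\ominus V\subseteq V\boxminus V$.

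With this inclusion in hand, the argument is short. Let $U$ be any neighborhood of $0$ and put $\kappa=Hs(G)$; by definition of the Hausdorff number there is a family $\gamma$ of neighborhoods of $0$ with $|\gamma|\leq\kappa$ and $\bigcap_{V\in\gamma}(V\boxminus V)\subseteq U$. Then
\[
\bigcap_{V\in\gamma}(\ominus V)\subseteq \bigcap_{V\in\gamma}(V\boxminus V)\subseteq U,
\]
so the same $\gamma$ witnesses that $wHs(G)\leq\kappa=Hs(G)$. Since $U$ was arbitrary, $wHs(G)\leq Hs(G)$.

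The only subtlety — and the one place I would be careful — is that $wHs(G)$ is defined for $T_1$ paratopological gyrogroups, so I should first remark that a Hausdorff paratopological gyrogroup is $T_1$, making the statement meaningful; this is immediate from the separation axiom hierarchy. I do not expect a genuine obstacle here: the whole content is the elementary identity $\ominus v = 0\boxminus v$ together with $0\in V$. If one prefers, the same inclusion can be read off directly from Definition~\ref{defbox}: $0\boxminus v = 0\ominus\text{gyr}[0,v]v = \ominus\,\text{gyr}[0,v]v = \ominus v$ using Theorem~\ref{the1.3}(12), which gives $\text{gyr}[0,v]=\mathrm{id}_G$; this avoids invoking the cancellation law and is perhaps the cleanest route.
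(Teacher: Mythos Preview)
Your proof is correct and follows essentially the same approach as the paper: both establish the inclusion $\ominus V\subseteq V\boxminus V$ via the identity $0\boxminus v=\ominus v$, and then conclude directly from the definitions. The only cosmetic difference is that the paper derives $0\boxminus v=\ominus v$ from Proposition~\ref{the3.22s}, whereas you obtain it from Theorem~\ref{the1.3}(5) (or alternatively from Definition~\ref{defbox} together with $\text{gyr}[0,v]=\mathrm{id}_G$); all of these are trivially equivalent routes to the same elementary fact.
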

\begin{proof}
For any $V\in \mathcal{U}$ and $v\in V$ we can get $0\boxminus v=\ominus v\oplus((v\oplus 0)\ominus v)=\ominus v$
by Proposition \ref{the3.22s}.
That is $\ominus V\subseteq V\boxminus V$. Hence we have $wHs(G)\leq Hs(G)$ by Definitions of Hausdorff number and weakly Hausdorff number.
\end{proof}

\begin{proposition}\label{pro3.2s1}
If $K$ is a subgyrogroup of a Hausdorff paratopological gyrogroup $G$, then the inequality $Hs(K)\leq Hs(G)$ holds.
\end{proposition}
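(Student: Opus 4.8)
The plan is to relate neighborhoods of $0$ in the subspace $K$ directly to neighborhoods of $0$ in $G$, and then transport a witnessing family for $Hs(G)$ down to $K$. First I would let $\kappa = Hs(G)$ and fix an arbitrary neighborhood $U$ of $0$ in $K$. By definition of the subspace topology there is a neighborhood $\widetilde{U}$ of $0$ in $G$ with $\widetilde{U}\cap K = U$. Since $\kappa = Hs(G)$, there is a family $\gamma = \{V_\alpha : \alpha < \kappa\}$ of neighborhoods of $0$ in $G$ with $|\gamma|\leq\kappa$ and $\bigcap_{\alpha}(V_\alpha\boxminus_G V_\alpha)\subseteq \widetilde{U}$, where $\boxminus_G$ is the cogyrogroup cooperation computed in $G$.

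The key step is to check that the trace family $\gamma' = \{V_\alpha\cap K : \alpha<\kappa\}$ witnesses $\bigcap_{\alpha}\big((V_\alpha\cap K)\boxminus_K (V_\alpha\cap K)\big)\subseteq U$ in $K$. Here I must be careful that the cooperation $\boxminus$ behaves well under passing to a subgyrogroup. Because $K$ is a subgyrogroup, the operation $\oplus$ restricts to $K$ and, by the definition of subgyrogroup, $\mathrm{gyr}[a,b]$ restricted to $K$ is an automorphism of $K$ for $a,b\in K$; hence for $a,b\in K$ the element $a\boxplus b = a\oplus\mathrm{gyr}[a,\ominus b]b$ computed in $G$ already lies in $K$ and coincides with the cooperation computed in $K$. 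Consequently $(V_\alpha\cap K)\boxminus_K(V_\alpha\cap K) = \{x\boxminus_G y : x,y\in V_\alpha\cap K\}\subseteq (V_\alpha\boxminus_G V_\alpha)\cap K$. Intersecting over $\alpha$ gives $\bigcap_\alpha\big((V_\alpha\cap K)\boxminus_K(V_\alpha\cap K)\big)\subseteq \big(\bigcap_\alpha (V_\alpha\boxminus_G V_\alpha)\big)\cap K\subseteq \widetilde{U}\cap K = U$, and each $V_\alpha\cap K$ is a neighborhood of $0$ in $K$, with $|\gamma'|\leq\kappa$. Since $U$ was arbitrary, $Hs(K)\leq\kappa = Hs(G)$.

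The main obstacle, and the only point requiring genuine care, is the compatibility of the cooperation $\boxplus$ with the subgyrogroup structure — i.e. verifying that $\boxminus$ computed inside $K$ agrees with the restriction of $\boxminus$ computed in $G$. This rests on the fact that the gyroautomorphisms $\mathrm{gyr}[a,b]$ for $a,b\in K$ map $K$ to $K$ (part of the definition of subgyrogroup), so no element ``escapes'' $K$ when forming $a\boxplus b$ with $a,b\in K$; everything else is a routine set-theoretic intersection argument together with the observation that trace neighborhoods of $0$ in the subspace $K$ are exactly intersections with $K$ of neighborhoods of $0$ in $G$.
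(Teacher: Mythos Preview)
Your proposal is correct and follows essentially the same route as the paper's proof: trace a witnessing family for $Hs(G)$ down to $K$ using the inclusion $(V\cap K)\boxminus(V\cap K)\subseteq V\boxminus V$. You are in fact more careful than the paper, which simply asserts this inclusion without discussing why the cooperation $\boxminus$ computed in $K$ agrees with the restriction from $G$; your explicit verification via the subgyrogroup condition on the gyroautomorphisms is exactly the point the paper leaves implicit.
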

\begin{proof}
Let \( K \) be a subgyrogroup of \( G \).
Clearly, \( K\) is a paratopological gyrogroup with an open base \( \mathcal{U}_{K} = \{ U \cap K \mid U \in \mathcal{U} \} \) at \( 0\).
For any $V\in \mathcal{U}$ we can get $(V\cap K)\boxminus (V\cap K)\subseteq V\boxminus V$.
Hence we have $Hs(K)\leq Hs(G)$ by the Definition of Hausdorff number.
\end{proof}

\begin{proposition}\label{pro3.3s1}
Let $\Pi=\prod_{i\in I} G_i$ be the topological product of a family of Hausdorff paratopological gyrogroups such that
$Hs(G_i)\leq\kappa$ for each $i\in I$. Then $Hs(\Pi)\leq\kappa$.
\end{proposition}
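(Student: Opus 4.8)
The plan is to reduce the assertion to basic open neighbourhoods of the identity $0=(0_i)_{i\in I}$ in $\Pi$ and then to assemble, coordinate by coordinate, the families witnessing $Hs(G_i)\le\kappa$. First I would record the structural facts I will use. Since the operation $\oplus$, the inversion $\ominus$, and (via the explicit formula of Theorem \ref{the1.3}(6)) the gyroautomorphisms of $\Pi$ are all computed coordinatewise, the cooperation $\boxminus$ of $\Pi$ is coordinatewise as well, i.e. $(x\boxminus y)_i=x_i\boxminus y_i$ for every $i\in I$. Moreover, in any gyrogroup $H$ one has $H\boxminus H=H$, because $a\boxminus 0=a\ominus\text{gyr}[a,0]0=a\ominus 0=a$ for each $a\in H$ (using $\text{gyr}[a,0]=\text{id}$). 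Finally, $\Pi$ is a paratopological gyrogroup and, being a product of Hausdorff spaces, it is Hausdorff, so $Hs(\Pi)$ is defined.

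Next I would reduce to basic neighbourhoods. Let $U$ be an arbitrary neighbourhood of $0$ in $\Pi$, and choose a basic open neighbourhood $W=\prod_{i\in I}W_i\subseteq U$, where $W_i\in\mathcal U_i$ for $i$ in a finite set $F\subseteq I$ and $W_i=G_i$ for $i\in I\setminus F$. It suffices to produce a family $\gamma$ of neighbourhoods of $0$ in $\Pi$ with $|\gamma|\le\kappa$ and $\bigcap_{V\in\gamma}(V\boxminus V)\subseteq W$, since then $\bigcap_{V\in\gamma}(V\boxminus V)\subseteq W\subseteq U$ and the same $\gamma$ works for $U$.

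Then comes the main construction. For each $i\in F$, using $Hs(G_i)\le\kappa$, I would fix a family $\gamma_i$ of neighbourhoods of $0_i$ in $G_i$ with $|\gamma_i|\le\kappa$ and $\bigcap_{V\in\gamma_i}(V\boxminus V)\subseteq W_i$, and set $\gamma=\{\pi_i^{-1}(V):i\in F,\ V\in\gamma_i\}$, where $\pi_i\colon\Pi\to G_i$ denotes the projection. Each member of $\gamma$ is a neighbourhood of $0$ and $|\gamma|\le\sum_{i\in F}|\gamma_i|\le\kappa$, as $F$ is finite and $\kappa$ is infinite (as is customary for cardinal invariants). The key point is the identity $\pi_i^{-1}(V)\boxminus\pi_i^{-1}(V)=\pi_i^{-1}(V\boxminus V)$: the inclusion $\subseteq$ is immediate from the coordinatewise description of $\boxminus$, and for $\supseteq$, given $z\in\Pi$ with $z_i\in V\boxminus V$, one writes $z_i=a\boxminus b$ with $a,b\in V$ and, for $j\in I\setminus\{i\}$, $z_j=z_j\boxminus 0_j$ (using $G_j\boxminus G_j=G_j$), which exhibits $z$ as a $\boxminus$-difference of two points of $\pi_i^{-1}(V)$. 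Consequently
\[
\bigcap_{V\in\gamma}(V\boxminus V)=\bigcap_{i\in F}\ \bigcap_{V\in\gamma_i}\pi_i^{-1}(V\boxminus V)=\bigcap_{i\in F}\pi_i^{-1}\!\Big(\bigcap_{V\in\gamma_i}(V\boxminus V)\Big)\subseteq\bigcap_{i\in F}\pi_i^{-1}(W_i)=W,
\]
which finishes the proof and yields $Hs(\Pi)\le\kappa$.

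The only genuine obstacle is the coordinatewise behaviour of $\boxminus$ on $\Pi$ together with the identity $\pi_i^{-1}(V)\boxminus\pi_i^{-1}(V)=\pi_i^{-1}(V\boxminus V)$; both rest on the observation that $\oplus$, $\ominus$ and $\text{gyr}$ in a product are computed componentwise and that $a\boxminus 0=a$ in every gyrogroup. Everything else is the routine "finite union of $\kappa$-sized families" bookkeeping.
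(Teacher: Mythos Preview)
Your proposal is correct and takes essentially the same approach as the paper, resting on the coordinatewise computation of $\boxminus$ in products together with the fact that basic neighbourhoods depend on finitely many coordinates. Your argument is more fully spelled out---in particular, you justify the identity $\pi_i^{-1}(V)\boxminus\pi_i^{-1}(V)=\pi_i^{-1}(V\boxminus V)$ via $a\boxminus 0=a$---whereas the paper simply reduces to two factors, records that $(U_G\times U_H)\boxminus(U_G\times U_H)$ is computed coordinatewise, and then invokes the definition.
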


\begin{proof}
Since every canonical open neighborhood in \( \Pi \) depends on finitely many coordinates,
we will prove that $Hs(\prod_{i\in I} G_i)\leq\kappa$ for $I=\{1, 2\}$.

Let \(G\) and \(H\) be two Hausdorff paratopological gyrogroups with
$Hs(G), Hs(H)\leq\kappa$.
For every neighborhoods \(U_G\) and \(U_H\) of the identity in \(G\) and \(H\) respectively,
we have $(U_G\times U_H)\boxminus(U_G\times U_H)=\{(u_G, u_H)\boxminus(v_G, v_H)|
u_G, v_G\in G; u_H, v_H\in H\}=\{(u_G \boxminus v_G, u_H \boxminus v_H)|
u_G, v_G\in G; u_H, v_H\in H\}$.
Hence we have $Hs(G \times H)\leq \kappa$ by the Definition of Hausdorff number.
\end{proof}

\begin{proposition}\label{pro3.4s1}
Every first-countable Hausdorff paratopological gyrogroup $G$ satisfies $Hs(G)\leq\omega$.
\end{proposition}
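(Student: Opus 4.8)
The plan is to exhibit, for \emph{every} neighbourhood $U$ of $0$, the \emph{same} countable witnessing family $\gamma$, namely a fixed countable base at the identity. Since $G$ is first-countable, fix a countable family $\{W_n : n\in\mathbb{N}\}\subseteq\mathcal{U}$ which is a base of open neighbourhoods at $0$; there is no loss in assuming the $W_n$ are decreasing (replace $W_n$ by $W_1\cap\dots\cap W_n$), although this is not needed below.

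First I would record the trivial monotonicity fact that $V\subseteq V'$ implies $V\boxminus V\subseteq V'\boxminus V'$, immediate from the definition of $\boxplus$ (Definition \ref{defbox}). Next, the key claim is that $\bigcap_{n\in\mathbb{N}}(W_n\boxminus W_n)=\{0\}$. Indeed, for every $O\in\mathcal{U}$ there is $n$ with $W_n\subseteq O$, so by monotonicity $\bigcap_{k\in\mathbb{N}}(W_k\boxminus W_k)\subseteq W_n\boxminus W_n\subseteq O\boxminus O$; intersecting over all $O\in\mathcal{U}$ gives $\bigcap_{k\in\mathbb{N}}(W_k\boxminus W_k)\subseteq\bigcap_{O\in\mathcal{U}}(O\boxminus O)$. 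Since $G$ is Hausdorff, the discussion opening Section 3 shows that $\bigcap_{O\in\mathcal{U}}(O\boxminus O)=\{0\}$ (via the equivalence $x\in V\boxminus V\Leftrightarrow V\cap(x\oplus V)\neq\emptyset$, which itself rests on Theorem \ref{the1.3}(4) and the bijectivity of right translations, Proposition \ref{the3.22}). As $0=0\boxminus 0\in W_n\boxminus W_n$ for all $n$, the reverse inclusion is clear, so the claim holds.

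Finally, given an arbitrary neighbourhood $U$ of $0$ in $G$, put $\gamma=\{W_n:n\in\mathbb{N}\}$. Then $|\gamma|\leq\omega$ and $\bigcap_{V\in\gamma}(V\boxminus V)=\{0\}\subseteq U$ because $0\in U$. By the definition of the Hausdorff number, $Hs(G)\leq\omega$.

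There is no substantive obstacle here: the entire argument reduces to the claim $\bigcap_{O\in\mathcal{U}}(O\boxminus O)=\{0\}$ for Hausdorff paratopological gyrogroups, which is already available from the motivating remarks at the start of Section 3, together with the elementary observation that a countable base can serve as the witnessing family uniformly in $U$. (If one wanted a variant for the weakly Hausdorff number, the same scheme works using $\bigcap_{n}(\ominus W_n)=\{0\}$ under the $T_1$ hypothesis, via Proposition \ref{pro3.2}.)
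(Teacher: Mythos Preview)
Your proposal is correct and is essentially the paper's own argument: fix a countable base $\{V_n\}$ at $0$, use the Hausdorff property to see that $\bigcap_n (V_n\boxminus V_n)=\{0\}$, and observe that this single countable family witnesses $Hs(G)\leq\omega$ for every neighbourhood $U$ of $0$. The only cosmetic difference is that you route the key claim through the monotonicity $V\subseteq V'\Rightarrow V\boxminus V\subseteq V'\boxminus V'$ and the full intersection $\bigcap_{O\in\mathcal{U}}(O\boxminus O)=\{0\}$, whereas the paper argues directly that for each $x\neq 0$ some basic $V_n$ already satisfies $V_n\cap(x\oplus V_n)=\emptyset$.
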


\begin{proof}
Let $\{V_n: n \in\omega\}$ be a countable neighborhood base of the identity 0 in $G$.
Take any open neighbourhood $U$ of 0.
Since $G$ is Hausdorff,
for any element $x\in G$ distinct from 0, there exists an open neighborhood $V_n$ of 0 such that $V_n\cap (x \oplus V_n)= \emptyset$. This implies that $x\notin V_n\boxminus V_n$. Consequently, $\bigcap_{n \in\omega} (V_n\boxminus V_n)=\{0\}\subseteq U$, that is $Hs(G)\leq\omega$.
\end{proof}


\begin{proposition}\label{pro3.2s}
If $K$ is a subgyrogroup of a $T_1$ paratopological gyrogroup $G$, then the inequality $wHs(K)\leq wHs(G)$ holds.
\end{proposition}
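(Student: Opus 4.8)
The plan is to mimic the proof of Proposition \ref{pro3.2s1} almost verbatim, replacing the operation $V \boxminus V$ by $\ominus V$ throughout. First I would observe that $K$, equipped with the subspace topology, is itself a paratopological gyrogroup with open base $\mathcal{U}_K = \{U \cap K : U \in \mathcal{U}\}$ at $0$; moreover, since the $T_1$ separation axiom is hereditary, $K$ is a $T_1$ paratopological gyrogroup, so that the quantity $wHs(K)$ is defined.

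Next, set $\kappa = wHs(G)$ and fix an arbitrary neighbourhood of $0$ in $K$; without loss of generality it has the form $U \cap K$ for some $U \in \mathcal{U}$. By the definition of $wHs(G)$, there is a family $\gamma$ of neighbourhoods of $0$ in $G$ with $\bigcap_{V \in \gamma}(\ominus V) \subseteq U$ and $|\gamma| \le \kappa$. I would then put $\gamma_K = \{V \cap K : V \in \gamma\}$, a family of neighbourhoods of $0$ in $K$ with $|\gamma_K| \le |\gamma| \le \kappa$.

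The key containment is that, since $K$ is a subgyrogroup, $a \in K$ implies $\ominus a \in K$, so $\ominus(V \cap K) \subseteq (\ominus V) \cap K$ for every $V \in \gamma$. Intersecting over $V \in \gamma$ gives
\[
\bigcap_{V \in \gamma} \ominus(V \cap K) \;\subseteq\; K \cap \bigcap_{V \in \gamma}(\ominus V) \;\subseteq\; K \cap U \;=\; U \cap K,
\]
which shows that $\gamma_K$ witnesses $wHs(K) \le \kappa = wHs(G)$, as required.

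There is no real obstacle here: the argument is purely formal and, as with Propositions \ref{pro3.2s1}--\ref{pro3.4s1}, the only point that requires a word of justification is that $wHs(K)$ makes sense at all, i.e. that $K$ inherits the $T_1$ property from $G$; everything else is a one-line set-theoretic computation together with the elementary fact (the Subgyrogroup Criterion) that inversion maps $K$ into $K$.
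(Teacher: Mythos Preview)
Your proposal is correct and follows essentially the same route as the paper's own proof: both arguments rest on the single containment $\ominus(V\cap K)\subseteq \ominus V$ and then read off the inequality from the definition of $wHs$. Your version is a bit more explicit (you note that $T_1$ is hereditary so that $wHs(K)$ is defined, and you write out the full chain of inclusions), but the underlying idea is identical.
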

\begin{proof}
Let \( K \) be a subgyrogroup of \( G \).
Clearly, \( K\) is a paratopological gyrogroup with an open base \( \mathcal{U}_{K} = \{ U \cap K \mid U \in \mathcal{U} \} \) at \( 0\).
For any $V\in \mathcal{U}$ we can get $\ominus(V\cap K)\subseteq \ominus V$.
Hence we have $wHs(K)\leq wHs(G)$ by the Definition of weakly Hausdorff number.
\end{proof}

\begin{proposition}\label{pro3.3s}
Let $\Pi=\prod_{i\in I} G_i$ be the topological product of a family of $T_1$ paratopological gyrogroups such that
$wHs(G_i)\leq\kappa$ for each $i\in I$. Then $wHs(\Pi)\leq\kappa$.
\end{proposition}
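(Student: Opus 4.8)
The plan is to mimic the proof of Proposition \ref{pro3.3s1}, with $\ominus V$ playing the role of $V\boxminus V$, and to exploit that inversion on a product gyrogroup is coordinatewise. First I would record the relevant facts about $\Pi=\prod_{i\in I}G_i$: it carries the coordinatewise gyrogroup structure (so $\ominus(x_i)_{i\in I}=(\ominus x_i)_{i\in I}$, the identity is $0=(0_i)_{i\in I}$, and $\text{gyr}[(x_i),(y_i)]=(\text{gyr}[x_i,y_i])_{i\in I}$), it is again a paratopological gyrogroup and, being a product of $T_1$ spaces, is $T_1$, so that $wHs(\Pi)$ is defined; moreover for any subsets $V_i\subseteq G_i$ one has $\ominus\prod_{i\in I}V_i=\prod_{i\in I}(\ominus V_i)$, and $\ominus G_i=G_i$ since $G_i$ is a gyrogroup. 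Then I would reduce to canonical neighbourhoods: since every neighbourhood of $0$ in $\Pi$ contains one of the form $W_0=\prod_{j\in F}U_j\times\prod_{i\in I\setminus F}G_i$ with $F\subseteq I$ finite and each $U_j$ an open neighbourhood of $0_j$ in $G_j$, it suffices to produce, for each such $W_0$, a family $\gamma$ of neighbourhoods of $0$ in $\Pi$ with $|\gamma|\le\kappa$ and $\bigcap_{V\in\gamma}(\ominus V)\subseteq W_0$.

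Next, for every $j\in F$ I would use $wHs(G_j)\le\kappa$ to fix a family $\{V_j^{(\alpha)}:\alpha<\kappa\}$ of neighbourhoods of $0_j$ with $\bigcap_{\alpha<\kappa}(\ominus V_j^{(\alpha)})\subseteq U_j$ (if only a family of size $<\kappa$ is available, extend it to a $\kappa$-indexed family by repeating its entries; this does not change the intersection). Then, for each $\alpha<\kappa$, I would set
\[
W^{(\alpha)}=\prod_{j\in F}V_j^{(\alpha)}\times\prod_{i\in I\setminus F}G_i ,
\]
a basic open neighbourhood of $0$ in $\Pi$, and take $\gamma=\{W^{(\alpha)}:\alpha<\kappa\}$, so that $|\gamma|\le\kappa$. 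Because $\ominus$ acts coordinatewise and intersections of Cartesian products are computed coordinatewise,
\[
\bigcap_{\alpha<\kappa}(\ominus W^{(\alpha)})=\prod_{j\in F}\Bigl(\bigcap_{\alpha<\kappa}(\ominus V_j^{(\alpha)})\Bigr)\times\prod_{i\in I\setminus F}G_i\subseteq\prod_{j\in F}U_j\times\prod_{i\in I\setminus F}G_i=W_0 ,
\]
and hence $wHs(\Pi)\le\kappa$.

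The one point that requires a little care — and where the argument goes slightly past the binary case handled in Proposition \ref{pro3.3s1} — is that each member of $\gamma$ must be a neighbourhood of $0$ in the \emph{whole} product, so the $|F|$ coordinate-witnesses must be amalgamated \emph{in parallel}: the $\alpha$-th set $W^{(\alpha)}$ restricts all coordinates in $F$ simultaneously (through $V_j^{(\alpha)}$) while leaving the coordinates outside $F$ equal to $G_i$. This is exactly what keeps $|\gamma|\le\kappa$, even when $\kappa$ is finite, and it removes any need for an induction on $|F|$. Everything else — that $\Pi$ is a paratopological $T_1$ gyrogroup, that inversion on $\Pi$ is coordinatewise, and the coordinatewise evaluation of the displayed intersection — is routine.
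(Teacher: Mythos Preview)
Your proof is correct and follows essentially the same approach as the paper: both rely on the observation that inversion in $\Pi$ is coordinatewise, so that $\ominus\prod_i V_i=\prod_i(\ominus V_i)$, together with the fact that basic neighbourhoods of the identity in the product depend on only finitely many coordinates. The paper first reduces to the binary case $G\times H$ and then invokes the coordinatewise identity, whereas you handle the finite support $F$ directly by amalgamating the coordinate witnesses in parallel into a single $\kappa$-indexed family; your version is in fact a bit more explicit about how the witnesses are combined, but the underlying idea is identical.
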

\begin{proof}
Since every canonical open neighborhood in \( \Pi \) depends on finitely many coordinates,
we will prove that $wHs(\prod_{i\in I} G_i)\leq\kappa$ for $i=1, 2$.

Let \(G\) and \(H\) be two $T_1$ paratopological gyrogroups with
$wHs(G), wHs(H)\leq\kappa$.
For every neighborhoods \(U_G\) and \(U_H\) of the identity in \(G\) and \(H\) respectively,
we have $\ominus(U_G \times U_H)=\{\ominus(u_G, u_H)|
u_G\in G; u_H\in H\}=\{(\ominus u_G , \ominus u_H)|
u_G\in G; u_H\in H\}$.
Hence we have $wHs(G \times H)\leq \kappa$ by the Definition of weakly Hausdorff number.
\end{proof}

\begin{proposition}\label{pro3.3}
For every $T_1$ paratopological gyrogroup $G$, the inequalities $wHs(G)\leq\psi(G)\leq \chi(G)$ hold.
\end{proposition}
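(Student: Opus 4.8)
The plan is to prove the two inequalities separately. The right-hand one, $\psi(G)\le\chi(G)$, is the standard fact for $T_1$ spaces; the left-hand one, $wHs(G)\le\psi(G)$, is where the gyrogroup structure enters, and it enters only through the fact that $\ominus$ is an involution. I will use throughout that $G$ is topologically homogeneous — by Proposition~\ref{pro23} the left translation $L_x$ is a homeomorphism, and $L_x(0)=x\oplus 0=x$ by $(G1)$, so every point of $G$ is the image of $0$ under a self-homeomorphism. Consequently $\psi(G)=\psi(G,0)$ and $\chi(G)=\chi(G,0)$, and it suffices to argue at the identity.

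For $\psi(G)\le\chi(G)$ I would fix a local base $\mathcal{B}$ at $0$ of cardinality $\chi(G)=\chi(G,0)$ and observe that, since $G$ is $T_1$, for each $x\in G\setminus\{0\}$ there is an open set containing $0$ but not $x$, hence some member of $\mathcal{B}$ excluding $x$. Therefore $\bigcap\mathcal{B}=\{0\}$, and this family of open neighbourhoods of $0$ witnesses $\psi(G)=\psi(G,0)\le|\mathcal{B}|=\chi(G)$.

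For $wHs(G)\le\psi(G)$, set $\kappa=\psi(G)=\psi(G,0)$ and fix a family $\gamma=\{W_\alpha:\alpha<\kappa\}$ of open sets with $\bigcap_{\alpha<\kappa}W_\alpha=\{0\}$; each $W_\alpha$ automatically contains $0$, so $\gamma$ is a family of neighbourhoods of $0$. The key step is to transfer this intersection across the inverse map: from $(G2)$ one gets $\ominus(\ominus x)=x$ for all $x$, so $x\mapsto\ominus x$ is a bijection of $G$ with itself, and hence $x\in\ominus W_\alpha$ if and only if $\ominus x\in W_\alpha$. This yields $\bigcap_{V\in\gamma}(\ominus V)=\{\,x:\ominus x\in\bigcap_{\alpha<\kappa}W_\alpha\,\}=\{0\}$. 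Then for every neighbourhood $U$ of $0$ the family $\gamma$ satisfies $\bigcap_{V\in\gamma}(\ominus V)=\{0\}\subseteq U$ with $|\gamma|=\kappa$, so the definition of the weakly Hausdorff number gives $wHs(G)\le\kappa=\psi(G)$. Chaining the two inequalities finishes the proof.

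I do not expect a genuine obstacle here. The only points that need a moment's care are that the witnessing family for the pseudocharacter of $G$ may be taken to consist of neighbourhoods of $0$ (automatic, since an open set appearing in an intersection equal to $\{0\}$ must contain $0$) and the bijectivity of $\ominus$, which is what allows one to push $\bigcap W_\alpha=\{0\}$ through it. Homogeneity is used only for convenience; one could equally run the $T_1$ and pseudocharacter arguments at an arbitrary point rather than at $0$.
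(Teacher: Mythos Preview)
Your proof is correct and is precisely the elaboration the paper's own proof omits: the paper merely states that the result follows directly from the definitions of pseudocharacter, character, and weakly Hausdorff number, without spelling out the details. Your argument---reducing to the identity via homogeneity, using a local base to witness $\psi(G)\le\chi(G)$, and pushing a pseudobase at $0$ through the involution $\ominus$ to get $\bigcap_{V\in\gamma}(\ominus V)=\{0\}\subseteq U$---is exactly what ``directly obtained from the definitions'' means here.
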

\begin{proof}
Based on the definition of pseudocharacter, character and weakly Hausdorff number, the result can be directly obtained.
\end{proof}

For a Hausdorff paratopological gyrogroup $G$,
the Hausdorff number $Hs(G)$ is bounded by the Lindel\"{o}f number $l(G)$ of
the gyrogroup:

\begin{proposition}\label{pro3.4s}
Let $G$ be a Hausdorff paratopological gyrogroup. Then it holds that $Hs(G) \leq l(G)$.
\end{proposition}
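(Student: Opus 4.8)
The plan is to adapt to gyrogroups the classical argument that $Hs(G)\le l(G)$ for Hausdorff paratopological groups. Set $\kappa=l(G)$ and fix an arbitrary open neighbourhood $U$ of $0$; we must produce a family $\gamma$ of neighbourhoods of $0$ with $|\gamma|\le\kappa$ and $\bigcap_{V\in\gamma}(V\boxminus V)\subseteq U$. The preliminary step is to rephrase membership in $V\boxminus V$: for $v,v'\in V$, the equality $x=v\boxminus v'$ is equivalent to $x\oplus v'=v$, since $(a\boxminus b)\oplus b=a$ by Theorem \ref{the1.3}(5) and $R_{v'}$ is injective by Proposition \ref{the3.22}. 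Hence $x\in V\boxminus V$ if and only if $(x\oplus V)\cap V\neq\emptyset$. Combined with the Hausdorff separation recalled at the start of this section, this gives: for every $x\in G\setminus U$ (so $x\neq 0$, as $0\in U$) there is an open $V_x\in\mathcal{U}$ with $(x\oplus V_x)\cap V_x=\emptyset$.

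Next I would localise this disjointness to an open neighbourhood of $x$. By Proposition \ref{pro23}(3) the set $x\oplus V_x$ is open, and it contains $x=x\oplus 0$; joint continuity of $\oplus$ at $(x,0)$ then yields open sets $O_x\ni x$ and $W_x\ni 0$ with $O_x\oplus W_x\subseteq x\oplus V_x$, and after replacing $W_x$ by $W_x\cap V_x$ we may also assume $W_x\subseteq V_x$. Then $(O_x\oplus W_x)\cap W_x\subseteq(x\oplus V_x)\cap V_x=\emptyset$, so for each $y\in O_x$ we have $(y\oplus W_x)\cap W_x=\emptyset$, i.e. $y\notin W_x\boxminus W_x$ by the equivalence established above.

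Finally I would invoke the Lindel\"{o}f number. The family $\{U\}\cup\{O_x:x\in G\setminus U\}$ is an open cover of $G$, so it admits a subcover of cardinality at most $\kappa$; let $\{x_\alpha:\alpha\in A\}$ be the corresponding index set, $|A|\le\kappa$, so that $G=U\cup\bigcup_{\alpha\in A}O_{x_\alpha}$. Put $\gamma=\{W_{x_\alpha}:\alpha\in A\}$, a family of neighbourhoods of $0$ with $|\gamma|\le\kappa$. If $y\in G\setminus U$, then $y\in O_{x_\alpha}$ for some $\alpha\in A$, hence $y\notin W_{x_\alpha}\boxminus W_{x_\alpha}$, and therefore $y\notin\bigcap_{\alpha\in A}(W_{x_\alpha}\boxminus W_{x_\alpha})$. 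Thus $\bigcap_{V\in\gamma}(V\boxminus V)\subseteq U$, and since $U$ was arbitrary, $Hs(G)\le\kappa=l(G)$.

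I do not expect a genuine obstacle here: the only delicate points are the algebraic identity identifying when $x$ lies in $V\boxminus V$ and the fact that left translates of open sets are open, both already available in the excerpt. Note in particular that neither gyrocommutativity nor a gyr-invariant neighbourhood base is used anywhere, so the argument applies to every Hausdorff paratopological gyrogroup, exactly as stated.
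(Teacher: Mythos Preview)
Your proof is correct and follows essentially the same strategy as the paper: separate each $x\in G\setminus U$ from $0$, upgrade the separation so that an open neighbourhood of $x$ misses some $V\boxminus V$, and then extract a subcover of size $\le l(G)$. The only cosmetic difference is that the paper packs the localisation step into the choice of $V_x$ itself---choosing $V_x$ so that $V_x\cap((x\oplus V_x)\oplus V_x)=\emptyset$, which directly gives $(V_x\boxminus V_x)\cap(x\oplus V_x)=\emptyset$ and uses $x\oplus V_x$ as the covering set---whereas you first take the basic Hausdorff $V_x$ and then use continuity of $\oplus$ at $(x,0)$ to produce $O_x$ and $W_x$; the two arguments are interchangeable.
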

\begin{proof}
Let \( U \) be any neighborhood of the identity element \(0 \) in \( G \).
Since \( G \) is a Hausdorff paratopological gyrogroup, for every  \( x\in G\setminus U \), there exists an open neighborhood \( V_x \) of \( 0 \) such that \( V_x \cap (x\oplus V_x)\oplus V_x=\emptyset \),
which means $v_1\neq(x\oplus v_2)\oplus v_3$ for each $v_1, v_2, v_3\in V_x$.
This is equivalent to
$v_1\boxminus v_3 \neq x \oplus v_2$.
Or, equivalently,
$(V_x\boxminus V_x) \cap (x \oplus V_x)= \emptyset$.
Given that \( G \setminus U \) is closed in \( G \) and the family \( \{x\oplus V_x : x \in G \setminus U\} \) covers \( G \setminus U \), we can find a subset \( A \subseteq G \setminus U \) such that \( G \setminus U \subseteq \bigcup_{x \in A} x\oplus V_x \) and \( |A|\leq l(G)\). Consequently, the family \( \gamma = \{V_x : x \in A\} \) satisfies \( \bigcap_{x \in A} V_x\boxminus V_x\subseteq U \) and \( |\gamma| \leq l(G) \). Therefore, this leads to the inequality
\(Hs(G) \leq l(G).\)
\end{proof}

It has been shown that the weakly Hausdorff number \( wHs(G) \) of a paratopological gyrogroup \( G \) is less than or equal to the Lindel\"{o}f number \( l(G) \) of the gyrogroup.

\begin{proposition}\label{pro3.4s}
Let $G$ be a $T_1$ paratopological gyrogroup. Then it holds that $wHs(G) \leq l(\overline{\ominus V})$ for any nonempty open subset $V$ of $G$.
\end{proposition}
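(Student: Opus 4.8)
\emph{Overall approach.} The plan is to localize the covering argument behind $Hs(G)\le l(G)$ to the subspace $\overline{\ominus V}$, using $T_1$ separation in place of Hausdorffness. Fix a neighbourhood $U$ of $0$; I must produce a family $\gamma$ of neighbourhoods of $0$ with $|\gamma|\le l(\overline{\ominus V})$ and $\bigcap_{W\in\gamma}(\ominus W)\subseteq U$. A first step reduces to the case $0\in V$: for $a\in V$ the set $\ominus a\oplus V$ is an open neighbourhood of $0$ (Proposition~\ref{pro23}), and using the Gyrosum Inversion law (Theorem~\ref{the1.3}(7)) one rewrites $\ominus(\ominus a\oplus v)=\text{gyr}[\ominus a,v]((\ominus v)\oplus a)$, so that $\ominus(\ominus a\oplus V)$ is the image of the homeomorphic copy $R_a(\ominus V)$ of $\ominus V$ under a pointwise-varying gyroautomorphism; each $\text{gyr}[x,y]$ and each $R_a$ being a homeomorphism (Propositions~\ref{pro23s}, \ref{pro2.14s}), one argues $l(\overline{\ominus(\ominus a\oplus V)})\le l(\overline{\ominus V})$, so it is enough to treat $0\in V$. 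Then $0\in\ominus V\subseteq\overline{\ominus V}$, and $V$ itself will be placed in $\gamma$.

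\emph{The engine.} Since $G$ is $T_1$, the set $G\setminus\{0\}$ is open. For each $y\in\overline{\ominus V}\setminus U$ we have $y=y\oplus 0\ne 0$, so joint continuity of $\oplus$ supplies an open set $O_y\ni y$ and a neighbourhood $W_y$ of $0$ with $O_y\oplus W_y\subseteq G\setminus\{0\}$. This forces $O_y\cap(\ominus W_y)=\emptyset$: if $\ominus w\in O_y$ for some $w\in W_y$, then $0=(\ominus w)\oplus w\in O_y\oplus W_y$, which is impossible. The family $\{U\}\cup\{O_y:y\in\overline{\ominus V}\setminus U\}$ covers the subspace $\overline{\ominus V}$ by sets open in $G$, so by the definition of $l(\overline{\ominus V})$ it admits a subcover $\{U\}\cup\{O_y:y\in A\}$ with $|A|\le l(\overline{\ominus V})$.

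\emph{Conclusion and the main obstacle.} Put $\gamma=\{V\}\cup\{W_y:y\in A\}$; then $|\gamma|\le l(\overline{\ominus V})$, and since $V\in\gamma$ we get $\bigcap_{W\in\gamma}(\ominus W)\subseteq\ominus V\subseteq\overline{\ominus V}$. If some $z$ in this intersection were outside $U$, then $z\in\overline{\ominus V}\setminus U$, hence $z\in O_y$ for some $y\in A$, while $z\in\ominus W_y$ because $W_y\in\gamma$ --- contradicting $O_y\cap(\ominus W_y)=\emptyset$. Therefore $\bigcap_{W\in\gamma}(\ominus W)\subseteq U$, and as $U$ was arbitrary, $wHs(G)\le l(\overline{\ominus V})$. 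The covering bookkeeping and the $T_1$-plus-continuity observation are routine; I expect the real difficulty to be the reduction to $0\in V$: in a non-associative setting a left translate of $\ominus V$ does not become a single homeomorphic image of $\ominus V$ but acquires a gyroautomorphism at every point (Theorem~\ref{the1.3}(7)), and one must verify that this ``moving conjugation'' does not inflate the Lindel\"{o}f number of the closure. This is also what makes the condition $0\in\overline{\ominus V}$ essential, since without it the intersection $\bigcap_{W\in\gamma}(\ominus W)$ cannot be trapped inside $\overline{\ominus V}$.
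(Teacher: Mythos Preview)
Your approach is essentially the same as the paper's: reduce to $0\in V$, cover $\overline{\ominus V}\setminus U$ using $T_1$-separation plus joint continuity of $\oplus$ (the paper uses neighbourhoods of the form $x\oplus W_x$ while you use generic $O_y$, a cosmetic difference), extract a subcover of size at most $l(\overline{\ominus V})$, and add $V$ to the family $\gamma$ to trap the intersection inside $\overline{\ominus V}$. The reduction you single out as the real obstacle is handled in the paper by the single phrase ``Since $G$ is homogeneous, it is enough to show the inequality for every open neighborhood $V$ of the identity $0$,'' with no gyroautomorphism analysis, so your explicit discussion of the moving-conjugation issue goes beyond what the paper provides.
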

\begin{proof}
Since $G$ is homogeneous,
it is enough to show that the inequality $wHs(G)\leq l(\overline{\ominus V})$  holds for every open neighborhood $V$ of the identity 0 in $G$. Put $\tau =
l(\overline{\ominus V})$ and
take any open neighborhood $U$ of the identity 0 in $G$.
Clearly $l(\overline{\ominus V}\setminus U)\leq\tau$.
Since $G$ is a $T_1$ paratopological gyrogroup, for each $x\in \overline{\ominus V}\setminus U$ there exists $W_x$ of open neighborhood at the identity 0 such
that $0\notin (x\oplus W_x)\oplus W_x$,
which means $(x\oplus w_1)\oplus w_2\neq 0$ for each $w_1, w_2\in W_x$.
This is equivalent to
$\ominus w_2 \neq (x \oplus w_1)$.
Or, equivalently,
$\ominus W_x \cap (x \oplus W_x)=\emptyset$.
The set $\overline{\ominus V}\setminus U$ is closed in $G$ and the family $\{x\oplus W_x:x\in \overline{\ominus V}\setminus U\}$ is
an open cover of $\overline{\ominus V}\setminus U$, so there exists a family $\gamma=\{W_{\alpha}:\alpha<\tau\}$ of open neighborhoods at the identity 0 and a subset $A=\{a_{\alpha}:\alpha<\tau\}\subseteq \overline{\ominus V}\setminus U$ such that $\overline{\ominus V}\setminus U\subseteq \bigcup_{\alpha<\tau}(a_{\alpha}\oplus W_{\alpha})$.
Put $\delta=\gamma \bigcup\{V\}$. It follows that $\bigcap_{W\in\delta}(\ominus W)\subseteq U$.
It implies that $wHs(G)\leq \tau=l(\overline{\ominus V})$.
\end{proof}

According to Proposition \ref{pro3.4s} the following corollary is obvious.
\begin{corollary}\label{cor3.7}
For any $T_1$ paratopological gyrogroup $G$, the inequality $wHs(G) \leq l(G)$ holds true.
\end{corollary}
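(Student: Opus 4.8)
The plan is to deduce this at once from the preceding Proposition \ref{pro3.4s}, which asserts that $wHs(G)\leq l(\overline{\ominus V})$ for \emph{every} nonempty open subset $V$ of $G$. So it suffices to exhibit one such $V$ for which $l(\overline{\ominus V})\leq l(G)$, and in fact the cleanest choice is $V=G$ itself.

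First I would note that $V=G$ is a nonempty open subset of $G$, so Proposition \ref{pro3.4s} applies and gives $wHs(G)\leq l(\overline{\ominus G})$. Next, since the inversion map $x\mapsto\ominus x$ is a bijection of $G$ onto itself (by axiom $(G2)$, with $\ominus(\ominus x)=x$), we have $\ominus G=G$, hence $\overline{\ominus G}=\overline{G}=G$ and therefore $l(\overline{\ominus G})=l(G)$. Combining the two observations yields $wHs(G)\leq l(G)$, which is the assertion.

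There is essentially no obstacle here; the only mild subtlety worth flagging is that the Lindel\"of number is not monotone under passage to arbitrary subspaces, so one should not try to invoke $l(\overline{\ominus V})\leq l(G)$ for an arbitrary open $V$ without justification. That pitfall is avoided entirely by taking $V=G$ (so that $\overline{\ominus V}=G$ on the nose); alternatively, for a general nonempty open $V$ one can still conclude, since $\overline{\ominus V}$ is a \emph{closed} subspace of $G$ and the Lindel\"of number \emph{is} monotone with respect to closed subspaces, so $l(\overline{\ominus V})\leq l(G)$ and Proposition \ref{pro3.4s} again closes the argument.
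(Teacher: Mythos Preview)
Your proof is correct and takes essentially the same approach as the paper, which simply states that the corollary is obvious from Proposition~\ref{pro3.4s}. Your choice of $V=G$ (and the alternative via monotonicity of the Lindel\"of number on closed subspaces) makes explicit exactly the step the paper leaves implicit.
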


Let $G$ be a paratopological group.
For every neighbourhood $U$ of the identity $e$ in $G$, if one
can find a countable family $\gamma$ of neighbourhoods of $e$ such that for each $x\in G$ there exists $V\in\gamma$ satisfying
$xVx^{-1}\subseteq U$, then $G$ is called $\omega$-balanced.
Such a family $\gamma$ is usually called subordinated to $U$.
Given that gyrogroups represent a natural generalization of groups, we extend the notion of $\omega$-balanced to paratopological gyrogroups.


For a paratopological gyrogroup $G$, a family $\gamma$ of open neighborhoods of 0 in $G$ is called {\it subordinated} to a neighborhood $U$ of the identity in $G$ if the following conditions are satisfied:
\begin{enumerate}
\item[(1)] for each $x, y\in G$, there exists $V \in\gamma$ such that $\ominus(x\oplus y)\oplus ((V\oplus x)\oplus y)\subseteq U$;
\item[(2)] for each $x, y\in G$, there exists $V \in\gamma$ such that $V\subseteq \ominus(x\oplus y)\oplus ((U\oplus x)\oplus y)$.
\end{enumerate}

A paratopological gyrogroup $G$ is called $\omega$-{\it balanced} if for every neighborhood $U$ of the identity 0 in $G$, one can find a countable family $\gamma$ of open neighborhoods of 0 in $G$
such that $\gamma$ is subordinated to $U$.

\begin{remark}
Given that when \text{gyr}$[x, y]$ acts as the identity map for all $x, y$ in $G$, the gyrogroup is a group, and at this time, the definition of $\omega$-balanced in the paratopological gyrogroup is consistent with the definition of $\omega$-balanced in the paratopological group.
\end{remark}

\begin{proposition}\label{pro3.12s}
Every subgyrogroup of an $\omega$-balanced paratopological gyrogroup is $\omega$-balanced.
\end{proposition}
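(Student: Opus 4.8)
The plan is as follows. Let $G$ be an $\omega$-balanced paratopological gyrogroup with neighborhood base $\mathcal{U}$ at $0$, and let $K$ be a subgyrogroup of $G$. As in the proof of Proposition \ref{pro3.2s}, $K$ is a paratopological gyrogroup whose open base at $0$ is $\mathcal{U}_K=\{U\cap K:U\in\mathcal{U}\}$. Fix $U\in\mathcal{U}$; we must produce a countable family of open neighborhoods of $0$ in $K$ subordinated (in $K$) to $U\cap K$. Since $G$ is $\omega$-balanced, fix a countable family $\gamma\subseteq\mathcal{U}$ subordinated to $U$ in $G$, and set $\gamma_K=\{V\cap K:V\in\gamma\}$, a countable family of open neighborhoods of $0$ in $K$. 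I claim $\gamma_K$ is subordinated to $U\cap K$.

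Condition (1) is easy: given $x,y\in K\subseteq G$, choose $V\in\gamma$ with $\ominus(x\oplus y)\oplus((V\oplus x)\oplus y)\subseteq U$ and put $W=V\cap K$. Then $\ominus(x\oplus y)\oplus((W\oplus x)\oplus y)\subseteq\ominus(x\oplus y)\oplus((V\oplus x)\oplus y)\subseteq U$, and since $W$, $x$, $y$ all lie in $K$ and $K$, being a subgyrogroup, is closed under $\oplus$ and $\ominus$, the left-hand set is also contained in $K$; hence it lies in $U\cap K$, as required.

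For condition (2), given $x,y\in K$, choose $V\in\gamma$ with $V\subseteq\ominus(x\oplus y)\oplus((U\oplus x)\oplus y)$ and again put $W=V\cap K$. Take an arbitrary $w\in W$ and write $w=\ominus(x\oplus y)\oplus((u\oplus x)\oplus y)$ with $u\in U$; it suffices to show $u\in K$, for then $u\in U\cap K$ and $w\in\ominus(x\oplus y)\oplus(((U\cap K)\oplus x)\oplus y)$. By the left cancellation law (Theorem \ref{the1.3}(3)) we get $(u\oplus x)\oplus y=(x\oplus y)\oplus w$, which belongs to $K$ since $x\oplus y\in K$ and $w\in K$. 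Hence $u=R_x^{-1}\big(R_y^{-1}((x\oplus y)\oplus w)\big)$, where $R_a^{-1}(z)=(\ominus a)\oplus((a\oplus z)\oplus(\ominus a))$ by Proposition \ref{pro2.14s}; since $x,y\in K$ and $K$ is a subgyrogroup, both $R_y^{-1}$ and $R_x^{-1}$ map $K$ into $K$, so $u\in K$. Thus $W\subseteq\ominus(x\oplus y)\oplus(((U\cap K)\oplus x)\oplus y)$, and $\gamma_K$ is subordinated to $U\cap K$. Since $U$ was arbitrary, $K$ is $\omega$-balanced.

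The only real obstacle is condition (2): simply intersecting the witnessing sets from $\gamma$ with $K$ is not obviously enough, because the right-hand set in (2) for $K$ involves $(U\cap K)\oplus x$ rather than $U\oplus x$. The resolution above is to track a point $w\in K$ back through the inverse of the map $u\mapsto\ominus(x\oplus y)\oplus((u\oplus x)\oplus y)$ and use that this inverse is a composition of translations by elements of $K$, which preserves $K$ by the subgyrogroup closure properties.
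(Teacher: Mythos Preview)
Your proposal is correct and follows the same strategy as the paper: intersect the subordinated family $\gamma$ with the subgyrogroup and verify both conditions of the definition. The paper disposes of condition~(2) in a single line by asserting the equality $(\ominus(x\oplus y)\oplus((U\oplus x)\oplus y))\cap K=\ominus(x\oplus y)\oplus(((U\cap K)\oplus x)\oplus y)$ (``according to the same operation''), whereas your explicit argument via the inverse right translations $R_x^{-1}, R_y^{-1}$ preserving $K$ is exactly the justification that this equality requires.
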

\begin{proof}
Let \( G \) be an \(\omega\)-balanced paratopological gyrogroup with a countable family \(\gamma\) of open neighborhoods of 0 in \( G \) such that \(\gamma\) is subordinated to every neighborhood \( U \) of the identity element 0. Let \( H \) be a subgyrogroup of \( G \).
Clearly, \( H \) is a paratopological gyrogroup with an open base \( \mathcal{U}_{H} = \{ U \cap H \mid U \in \mathcal{U} \} \) at \( 0\).
Let \( x, y \in H \) and let \( U \in \mathcal{U}\). Then, there exists $V \in\gamma$ such that
$\ominus(x\oplus y)\oplus (((V\cap H)\oplus x)\oplus y)=(\ominus(x\oplus y)\oplus ((V\oplus x)\oplus y))\cap H\subseteq U\cap H$.
According to the same operation, it follows that $V\cap H\subseteq\ominus(x\oplus y)\oplus (((U\cap H)\oplus x)\oplus y)$.
Thus, the countable family $\gamma_H=\{ V \cap H \mid V \in \gamma\}$ is subordinated to $U\cap H$.
\end{proof}

\begin{theorem}\label{the3.19}
Every first-countable paratopological gyrogroup $G$ is $\omega$-balanced.
\end{theorem}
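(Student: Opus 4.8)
The plan is to take $\gamma$ to be one fixed countable open base at the identity and to show that it is subordinated to every neighbourhood of $0$; since that base is countable, this is exactly what $\omega$-balancedness demands. By first-countability, fix a countable family $\mathcal{V}=\{V_n:n\in\omega\}$ of open neighbourhoods of $0$ forming a base at $0$. I claim that $\gamma:=\mathcal{V}$ is subordinated to an arbitrary neighbourhood $U$ of $0$.

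First I would reduce to the case where $U$ is a member of $\mathcal{V}$: given an arbitrary neighbourhood $U$ of $0$, choose $V_n\in\mathcal{V}$ with $V_n\subseteq U$, and observe that a family subordinated to $V_n$ is automatically subordinated to $U$. Indeed, condition (1) only becomes easier when $U$ is enlarged, and for condition (2) the inclusion $V_n\subseteq U$ gives $\ominus(x\oplus y)\oplus((V_n\oplus x)\oplus y)\subseteq\ominus(x\oplus y)\oplus((U\oplus x)\oplus y)$, so the same witness $V\in\mathcal{V}$ still works. It then remains to check that $\mathcal{V}$ is subordinated to each $V_m\in\mathcal{V}$, and this is precisely the content of Theorem \ref{the1}(3) applied to the open base $\mathcal{V}$ with $U:=V_m$: for every $x,y\in G$ there is $V\in\mathcal{V}$ with $\ominus(x\oplus y)\oplus((V\oplus x)\oplus y)\subseteq V_m$ and $V\subseteq\ominus(x\oplus y)\oplus((V_m\oplus x)\oplus y)$, which are exactly conditions (1) and (2) in the definition of a family subordinated to $V_m$.

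Alternatively, one could bypass Theorem \ref{the1} and argue directly: the map $\varphi_{x,y}(z)=\ominus(x\oplus y)\oplus((z\oplus x)\oplus y)$ equals $L_{\ominus(x\oplus y)}\circ R_y\circ R_x$, hence is a homeomorphism of $G$ with $\varphi_{x,y}(0)=0$ by Propositions \ref{pro23} and \ref{pro2.14s}; continuity of $\varphi_{x,y}$ at $0$ together with the base property of $\mathcal{V}$ yields condition (1), and openness of the set $\varphi_{x,y}(U)$ together with the base property yields condition (2). Either way, there is no serious obstacle; the only points requiring care are the reduction to a basic neighbourhood $V_m$ (so that Theorem \ref{the1}(3) becomes applicable) and the elementary monotonicity remark that being subordinated to $U'$ passes up to every $U\supseteq U'$.
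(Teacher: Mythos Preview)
Your proposal is correct and takes essentially the same approach as the paper: the paper fixes a countable base $\{V_n:n\in\omega\}$ at $0$, uses continuity of $f=L_{\ominus(x\oplus y)}\circ R_y\circ R_x$ at $0$ to obtain $V_{n_1}$ witnessing condition~(1), and uses openness of $\ominus(x\oplus y)\oplus((U\oplus x)\oplus y)$ to obtain $V_{n_2}$ witnessing condition~(2) --- precisely your ``alternative'' argument. Your primary route via Theorem~\ref{the1}(3) with the reduction to a basic $V_m$ is merely a cosmetic repackaging of the same computation; the paper simply works with the arbitrary $U$ directly, so the reduction step (and accompanying monotonicity remark) is not needed there.
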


\begin{proof}
Let $\{V_n: n \in\omega\}$ be a countable base of the space $G$ at the identity 0
of the gyrogroup $G$. Take any open neighbourhood $U$ of 0.
Since the map $f=L_{\ominus(x\oplus y)}\circ R_y\circ R_x: G \rightarrow G$ is continuous and $f(0)=0$, there exists $n_1 \in\omega$ such that
$f(V_{n_1})\subseteq U$. It follows that $\ominus(x\oplus y)\oplus ((V_{n_1}\oplus x)\oplus y)\subseteq U$.
It is obvious that $\ominus(x\oplus y)\oplus ((U\oplus x)\oplus y)$ is an open neighbourhood of 0, so there exists
$n_2 \in\omega$ such that $V_{n_2}\subseteq\ominus(x\oplus y)\oplus ((U\oplus x)\oplus y)$.
Put $V_n\subseteq V_{n_1}\cap V_{n_2}$. Then $V_n$ is the one we want.
Hence, $G$ is $\omega$-balanced.
\end{proof}

\begin{proposition}\label{pro3014s}
The topological product \(\Pi= \prod_{i \in I} G_i \) of a family of $\omega$-balanced paratopological gyrogroups is also $\omega$-balanced.
\end{proposition}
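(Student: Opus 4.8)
The plan is to reduce to basic open neighborhoods of the identity and then construct the subordinated family coordinatewise. First I would note that it suffices to produce, for every \emph{basic} open neighborhood $U$ of the identity $0=(0_i)_{i\in I}$ in $\Pi=\prod_{i\in I}G_i$, a countable family $\gamma$ of open neighborhoods of $0$ subordinated to $U$: any open neighborhood of $0$ contains such a basic one, and if $U'\subseteq U$ and $\gamma$ is subordinated to $U'$, then $\gamma$ is subordinated to $U$ as well, since $S\subseteq T$ implies $a\oplus S\subseteq a\oplus T$ and $S\oplus a\subseteq T\oplus a$, so both defining conditions of ``subordinated'' are preserved under enlarging $U$.

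So fix a basic open neighborhood $U=\prod_{i\in I}U_i$, where $F\subseteq I$ is finite, $U_i$ is an open neighborhood of $0_i$ in $G_i$ for $i\in F$, and $U_i=G_i$ for $i\notin F$. For each $i\in F$, since $G_i$ is $\omega$-balanced, pick a countable family $\gamma_i$ of open neighborhoods of $0_i$ in $G_i$ subordinated to $U_i$. Set
\[
\gamma=\Big\{\,{\textstyle\prod_{i\in I}}V_i\ :\ V_i\in\gamma_i\text{ for }i\in F,\ V_i=G_i\text{ for }i\notin F\,\Big\}.
\]
As $F$ is finite and each $\gamma_i$ is countable, $\gamma$ is a countable family of basic open neighborhoods of $0$ in $\Pi$.

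It remains to verify the two conditions in the definition of a subordinated family. The key point is that the operations in $\Pi$ are computed coordinatewise, so for $x=(x_i),y=(y_i)\in\Pi$ and $V=\prod_iV_i\in\gamma$ one has $\ominus(x\oplus y)\oplus((V\oplus x)\oplus y)=\prod_{i\in I}\big(\ominus(x_i\oplus y_i)\oplus((V_i\oplus x_i)\oplus y_i)\big)$, and similarly with $U$ in place of $V$. For condition~(1): for $i\in F$ choose $V_i\in\gamma_i$ with $\ominus(x_i\oplus y_i)\oplus((V_i\oplus x_i)\oplus y_i)\subseteq U_i$, and for $i\notin F$ take $V_i=G_i$, where the $i$-th inclusion is trivial because $U_i=G_i$; then $V=\prod_iV_i\in\gamma$ witnesses~(1). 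For condition~(2): for $i\in F$ choose $V_i\in\gamma_i$ with $V_i\subseteq\ominus(x_i\oplus y_i)\oplus((U_i\oplus x_i)\oplus y_i)$, and for $i\notin F$ take $V_i=G_i$, using that $\ominus(x_i\oplus y_i)\oplus((G_i\oplus x_i)\oplus y_i)=G_i$ because the left and right translations are bijections (Proposition~\ref{the3.22}); then this $V=\prod_iV_i\in\gamma$ witnesses~(2). (Note that conditions~(1) and~(2) may be witnessed by different members of $\gamma$, so no single $V$ needs to do both.) Hence $\gamma$ is subordinated to $U$ and $\Pi$ is $\omega$-balanced.

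I do not expect a genuine obstacle here; the only mild subtleties are the reduction to basic neighborhoods (so that only finitely many coordinates are constrained, keeping $\gamma$ countable) and the observation that the two ``subordinated'' conditions need not be witnessed simultaneously, which is what makes the coordinatewise choice legitimate.
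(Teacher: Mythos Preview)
Your proof is correct. The overall idea---build the subordinated family coordinatewise using the finitely many coordinates on which a basic neighborhood depends---is the same as the paper's, but the executions differ slightly. The paper argues that, since canonical open sets depend on finitely many coordinates, it suffices to treat the product of two factors, and then writes out the two-factor case; the induction from two to finitely many and the passage from finite to arbitrary products are left implicit. You instead handle the arbitrary product directly: you justify the reduction to basic neighborhoods explicitly, build $\gamma$ as a finite product of the coordinate families over the support $F$, and check the two subordination conditions coordinatewise, including the trivial coordinates via the bijectivity of the translations. Your route avoids the (easy but unstated) induction and makes the reduction step precise; the paper's route is shorter once one accepts that the two-factor case is enough.
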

\begin{proof}
Since every canonical open set in \( \Pi \) depends on finitely many coordinates,
we will prove that the product of two \(\omega\)-balanced paratopological gyrogroups is \(\omega\)-balanced.

Let \(G\) and \(H\) be two \(\omega\)-balanced paratopological gyrogroups. We need to prove that \(G \times H\) is also \(\omega\)-balanced.
Since \(G\) and \(H\) are \(\omega\)-balanced, by definition,
for every neighborhoods \(U_G\) and \(U_H\) of the identity in \(G\) and \(H\) respectively, one can find countable families $\gamma_G, \gamma_H$ of open neighborhoods of the identity in \(G\) and \(H\)
such that for each $x_1, y_1\in G$, $x_2, y_2\in H$ there exists $V_G \in\gamma_G$ and $V_H\in\gamma_H$ such that
\[ \ominus(x_1\oplus y_1)\oplus ((V_G\oplus x_1)\oplus y_1)\subseteq U_G, \]
\[V_G\subseteq \ominus(x_1\oplus y_1)\oplus ((U_G\oplus x_1)\oplus y_1),\]
\[ \ominus(x_2\oplus y_2)\oplus ((V_H\oplus x_2)\oplus y_2)\subseteq U_H, \]
\[V_H\subseteq \ominus(x_2\oplus y_2)\oplus ((U_H\oplus x_2)\oplus y_2).\]

Consider the neighborhood \(U_G \times U_H\) in \(G \times H\). Clearly:

$\ominus((x_1, x_2)\oplus (y_1, y_2))\oplus (((V_G\times V_H)\oplus (x_1, x_2))\oplus (y_1, y_2))
=(\ominus(\oplus y_1)\oplus ((V_G\oplus x_1)\oplus y_1))\times(\ominus(x_2\oplus y_2)\oplus ((V_H\oplus x_2)\oplus y_2)) \subseteq U_G \times U_H$,

$V_G\times V_H\subseteq \ominus((x_1\times x_2)\oplus (y_1\times y_2))\oplus (((U_G \times U_H)\oplus (x_1\times x_2))\oplus (y_1\times y_2))$.
This implies that \(G \times H\) is \(\omega\)-balanced.
\end{proof}

The verification of Theorems \ref{the3.26}, \ref{the3.30s} and \ref{the3.30} necessitates the utilization of the next two lemmas.

A subset $V$ of a semitopological gyrogroup $G$ is defined as $\omega$-{\it good} if there exists a countable subfamily $\gamma$ of the neighborhood system $\mathcal{U}$ of the identity element 0, such that for each $x \in V$, there exists $W \in \gamma$ satisfying the condition $x \oplus W \subseteq V$.

\begin{lemma}\label{lem3.16}
For every strongly paratopological gyrogroup $H$, there exists a local base at the identity element comprising of $\omega$-good sets.
\end{lemma}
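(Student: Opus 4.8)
The plan is to build, for a given neighborhood $U\in\mathcal{U}$, an $\omega$-good set $V$ with $V\subseteq U$ by iterating the standard "divide in half infinitely often" construction of topological groups, adapted to the strongly paratopological gyrogroup setting. First I would fix a neighborhood base $\mathcal{U}$ witnessing that $H$ is a strongly paratopological gyrogroup, so that every $U\in\mathcal{U}$ is gyr-invariant and Lemma \ref{lem2.21} applies. Starting from $U_0:=U$, use Theorem \ref{the1}(1) to pick $U_1\in\mathcal{U}$ with $U_1\oplus U_1\subseteq U_0$, and recursively $U_{n+1}\in\mathcal{U}$ with $U_{n+1}\oplus U_{n+1}\subseteq U_n$; shrinking via Theorem \ref{the1}(4) I may also assume $U_{n+1}\subseteq U_n$. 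Set $\gamma=\{U_n:n\in\omega\}$, a countable subfamily of $\mathcal{U}$.

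Next I would define $V=\bigcup\{\,x_1\oplus(x_2\oplus(\cdots\oplus(x_k\oplus U_{n}))\cdots)\,\}$ — more precisely, the set of all finite sums of the form $U_{n_1}\oplus U_{n_2}\oplus\cdots\oplus U_{n_k}$ (bracketed left-to-right, say) with $n_1<n_2<\cdots<n_k$, taking advantage of the fact that in a strongly paratopological gyrogroup the gyr-invariance of the $U_n$ lets one rearrange brackets cleanly (this is exactly the content of Lemma \ref{lem2.21}, and of the left-gyroassociative computations appearing in the proof of Theorem \ref{the}). The key containment is the telescoping estimate $U_{n_1}\oplus\cdots\oplus U_{n_k}\subseteq U_{n_1-1}$ whenever $n_1<\cdots<n_k$, proved by induction on $k$ using $U_{m}\oplus U_{m}\subseteq U_{m-1}$ together with monotonicity $U_n\subseteq U_m$ for $n\ge m$; this gives $V\subseteq U_0=U$, and since each $U_n\subseteq U_0$ and $0\in U_n$ we also get $U_n\subseteq V$, so $V$ is a neighborhood of $0$. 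Doing this for every $U\in\mathcal{U}$ produces a local base $\{V:U\in\mathcal{U}\}$ at $0$ refining $\mathcal{U}$.

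It remains to check $V$ is $\omega$-good with witness family $\gamma$. Take $x\in V$, so $x\in U_{n_1}\oplus\cdots\oplus U_{n_k}$ for some increasing indices; I claim $x\oplus U_{n_k+1}\subseteq V$. Indeed, writing $x=(y\oplus u_k)$ with $y\in U_{n_1}\oplus\cdots\oplus U_{n_{k-1}}$ and $u_k\in U_{n_k}$, one has $x\oplus U_{n_k+1}=(y\oplus u_k)\oplus U_{n_k+1}\subseteq (y\oplus U_{n_k})\oplus U_{n_k+1}$ after using Lemma \ref{lem2.21} to regroup (here gyr-invariance is essential to pull $u_k\oplus U_{n_k+1}$ inside a single $U_{n_k}$-block, via $u_k\oplus U_{n_k+1}\subseteq U_{n_k}\oplus U_{n_k}\subseteq U_{n_k-1}$ — more carefully one tracks that $U_{n_k}\oplus U_{n_k+1}\subseteq U_{n_k-1}$ is the wrong direction, so instead one keeps the block $U_{n_1}\oplus\cdots\oplus U_{n_k}\oplus U_{n_k+1}$, which is again a sum over the strictly increasing sequence $n_1<\cdots<n_k<n_k+1$, hence lies in $V$ by definition). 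So $x\oplus U_{n_k+1}\subseteq V$ with $U_{n_k+1}\in\gamma$, as required.

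The main obstacle I anticipate is purely bookkeeping: in a genuine gyrogroup the expression $U_{n_1}\oplus\cdots\oplus U_{n_k}$ is bracket-sensitive, and one must verify that all the regroupings used (both in proving $V\subseteq U$ and in the $\omega$-good verification) stay inside the prescribed family — this is where the hypothesis that $\mathcal{U}$ witnesses \emph{strongly} paratopological, via Lemma \ref{lem2.21} (associativity of the special blocks) and Proposition \ref{pro23s} (gyr is a homeomorphism), does the work. Once the combinatorial lemma "any left-bracketing of $U_{n_1}\oplus\cdots\oplus U_{n_k}$ with $n_1<\cdots<n_k$ is contained in $U_{n_1-1}$ and is contained in $V$" is established by induction, the statement follows.
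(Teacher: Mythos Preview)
Your approach is correct and essentially identical to the paper's: both build the sequence $\{U_n\}$ with $U_{n+1}\oplus U_{n+1}\subseteq U_n$, take $V$ as a union of telescoping sums of the $U_n$'s, invoke Lemma~\ref{lem2.21} to handle bracketing, and verify $\omega$-goodness by appending one more block $U_{n+1}$. The only difference is cosmetic: the paper restricts to \emph{consecutive} indices, setting $V_1=U_1$, $V_n=V_{n-1}\oplus U_n$, and $V=\bigcup_{n\ge 1} V_n$, so the $\omega$-good check is simply $V_n\oplus U_{n+1}=V_{n+1}\subseteq V$ and the bookkeeping you anticipate disappears entirely.
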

\begin{proof}
Suppose that the neighborhood base $\mathcal{U}$ at 0 of $G$ witness that $G$ is a strongly paratopological gyrogroup.
Consider $U\in\mathcal{U}$. Select a countable collection $\{U_n: n \in \omega\}\subseteq\mathcal{U}$, ensuring that $U_0=U$
and $U_{n+1}\oplus U_{n+1}\subseteq U_n$, for each $n\in\omega$.
Set $V_1=U_1$ and $V_2=U_1\oplus U_2$. Extending this pattern, for any given $n\in\omega$,
let $V_{n} = V_{n-1} \oplus U_{n}$ for any $n>1$.
It is evident that $V_n$ is open and $V_n\subseteq V_{n+1}$ for all natural numbers $n$.
By applying Lemma \ref{lem2.21} and using a typical reasoning process, we can conclude that $((U_{k+1}\oplus U_{k+2})\oplus U_{k+3})\oplus \cdot\cdot\cdot \oplus U_{k+n+1}\subseteq U_k$, for all $k,n\in\omega$.
Specifically, we have that $V_n\subseteq U_0=U$, for every integer $n \geq 1$. Consequently, we can deduce that the open set $V=\bigcup_{n=1}^{\infty}V_n$ is also contained within $U$.
Considering an element $x\in V$, based on the definition of $V$, it means that $x\in V_n$, for some $n\geq 1$. Given that $V_n\oplus U_{n+1}= V_{n+1} \subseteq V$, it follows that $x\oplus U_{n+1}\subseteq V$.
Alternatively, the family $\{U_n: n\in\omega\}$ provides evidence that the set $V$ meets the criteria to be considered $\omega$-good.
\end{proof}


\begin{lemma}\label{lem3.17}
Let $G$ be a paratopological gyrocommutative gyrogroup.
Assume that a family $\gamma\subseteq \mathcal{U}$ fulfills the following condition:
\begin{enumerate}
\item[(a)] for each $U \in \gamma$, there exists $V \in \gamma$ such that $V\oplus V\subseteq U$;
\item[(b)] for each $U \in \gamma$ and $x, y\in G$, there exists $V \in\gamma$ such that $\text{gyr}[x,y]V\subseteq U$;
\item[(c)] $\gamma$ is subordinated to $U$, for each $U\in\gamma$.
\end{enumerate}
In that case, the set $N=\bigcap\{U\cap(\ominus U): U\in\gamma\}$ forms an invariant subgyrogroup of $G$.
Furthermore, if the following condition is also met:
\begin{enumerate}
\item[(d)] $\bigcap\{\ominus V: V\in\gamma\}\subseteq U$, for all $U \in \gamma$,
\end{enumerate}
then $N=\bigcap\{U: U \in \gamma\}$ and the subgyrogroup $N$ is closed in $G$.
\end{lemma}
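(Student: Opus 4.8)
The plan is to verify the four claims — that $N$ is a subgyrogroup, that $N$ is invariant, and, under (d), that $N=\bigcap\{U:U\in\gamma\}$ and that $N$ is closed — in that order, using gyrocommutativity throughout to trade assertions about $\ominus$ for assertions about $\oplus$. Since every $U\in\gamma$ is a neighbourhood of $0$, we have $0\in U\cap(\ominus U)$ for all such $U$, so $0\in N$ and $N\neq\emptyset$; by the Subgyrogroup Criterion it then suffices to check closure under $\ominus$ and $\oplus$. Closure under $\ominus$ is formal: $\ominus$ is a bijection and $\ominus\bigl(U\cap(\ominus U)\bigr)=(\ominus U)\cap U$, so $\ominus N=N$. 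For $\oplus$, given $a,b\in N$ and $U\in\gamma$, use (a) to pick $V\in\gamma$ with $V\oplus V\subseteq U$; since $N\subseteq V$ we get $a\oplus b\in V\oplus V\subseteq U$, and applying this to $\ominus a,\ominus b\in N$ together with the gyroautomorphic inverse property $\ominus(a\oplus b)=\ominus a\ominus b$ (Theorem \ref{the2.5com}(1)) gives $\ominus(a\oplus b)\in U$, i.e. $a\oplus b\in\ominus U$. Hence $a\oplus b\in N$, and $N$ is a subgyrogroup.

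For invariance I would apply the criterion of Theorem \ref{pro2.11s}. Set $\varphi_{a,b}=L_{\ominus(a\oplus b)}\circ R_b\circ R_a$, a homeomorphism of $G$ fixing $0$ (Propositions \ref{pro23}, \ref{pro2.14s}), so that $\varphi_{a,b}(x)=\ominus(a\oplus b)\oplus((x\oplus a)\oplus b)$; a repeated use of Theorem \ref{the2.5com}(1) also yields the identity $\ominus\varphi_{a,b}(x)=\varphi_{\ominus a,\ominus b}(\ominus x)$. Condition (1) of Theorem \ref{pro2.11s}, $\text{gyr}[a,b](N)\subseteq N$, follows from (b): for $U\in\gamma$ choose $V\in\gamma$ with $\text{gyr}[a,b]V\subseteq U$, so $\text{gyr}[a,b](N)\subseteq U$; applying this to $\ominus n\in N$ and using $\text{gyr}[a,b](\ominus n)=\ominus\text{gyr}[a,b](n)$ (Theorem \ref{the1.3}(8)) gives $\text{gyr}[a,b](N)\subseteq\ominus U$ as well. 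Condition (2) is $\varphi_{a,b}(N)=N$. The inclusion $\varphi_{a,b}(N)\subseteq N$ uses the first clause of the requirement that $\gamma$ be subordinated to $U$ (condition (c)), which provides a $V\in\gamma$ with $\varphi_{a,b}(V)\subseteq U$, hence $\varphi_{a,b}(N)\subseteq U$; the reverse inclusion uses the second clause, which provides a $W\in\gamma$ with $W\subseteq\varphi_{a,b}(U)$, i.e. $\varphi_{a,b}^{-1}(W)\subseteq U$, hence $\varphi_{a,b}^{-1}(N)\subseteq U$ and, since $\varphi_{a,b}$ is a bijection, $N\subseteq\varphi_{a,b}(N)$. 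In each case the missing ``$\ominus U$'' half is recovered from the displayed identity, from $\ominus N=N$, and from the corresponding statement at the pair $(\ominus a,\ominus b)$. This gives $N\unlhd G$.

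Now assume (d). For $N=\bigcap\{U:U\in\gamma\}$ only the inclusion $\supseteq$ needs an argument: if $x\in U$ for all $U\in\gamma$, then $\ominus x\in\bigcap\{\ominus V:V\in\gamma\}\subseteq U$ for every $U\in\gamma$ by (d), whence $x=\ominus(\ominus x)\in\ominus U$ for every $U\in\gamma$, so $x\in N$. For closedness, the key observation is that $V\oplus x$ is an open neighbourhood of $x$ for every $V\in\mathcal U$ and $x\in G$ (because $R_x$ is a homeomorphism carrying $0$ to $x$, Proposition \ref{pro2.14s}). So, given $x\in\overline N$ and $U\in\gamma$, pick $V\in\gamma$ with $V\oplus V\subseteq U$ by (a); then $(V\oplus x)\cap N\ne\emptyset$ produces $v\in V$ and $n\in N$ with $v\oplus x=n$, i.e. $x=(\ominus v)\oplus n$. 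By invariance (Theorem \ref{the2.5}) $(\ominus v)\oplus N=N\oplus(\ominus v)$, so $x=n'\oplus(\ominus v)$ for some $n'\in N$, and therefore $\ominus x=(\ominus n')\oplus v$ by Theorem \ref{the2.5com}(1). Since $\ominus n'\in N\subseteq V$ and $v\in V$, we get $\ominus x\in V\oplus V\subseteq U$; as $U\in\gamma$ was arbitrary, $\ominus x\in\bigcap\{U:U\in\gamma\}=N$, hence $x\in\ominus N=N$, and $\overline N=N$.

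The subgyrogroup part and the group-style reductions in the invariance step are routine; the places that need care are the bookkeeping identity $\ominus\varphi_{a,b}(x)=\varphi_{\ominus a,\ominus b}(\ominus x)$ used to handle all the $\ominus U$-halves, and, above all, the closedness argument. Because $G$ is only paratopological, inversion is discontinuous, so one cannot argue symmetrically in $x$ and $\ominus x$: every step controlling an expression involving $\ominus v$ has to be routed through invariance and gyrocommutativity, and this is exactly why hypothesis (d), which ties $\bigcap\{\ominus V:V\in\gamma\}$ back into $\gamma$, is indispensable. I expect the closedness step to be the main obstacle.
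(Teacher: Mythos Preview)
Your proposal is correct and follows essentially the same approach as the paper: verify the subgyrogroup criterion, apply Theorem~\ref{pro2.11s} for invariance via conditions (b) and (c), deduce $N=\bigcap_{U\in\gamma}U$ from (d), and establish closedness through the open neighbourhood $V\oplus x$. The only noteworthy tactical differences are that your subgyrogroup step bypasses (b) by invoking the gyroautomorphic inverse property directly, and your closedness argument detours through the already-proved invariance ($(\ominus v)\oplus N=N\oplus(\ominus v)$) whereas the paper more directly observes $N=\ominus N\subseteq\ominus V$ to obtain $x\in\ominus V\oplus N\subseteq\ominus V\oplus(\ominus V)\subseteq\ominus U$ in one step.
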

\begin{proof}
 Assuming that conditions (a)-(c) are satisfied. It is evident that $N =\ominus N$.
Indeed, for any $a, b\in N$ and $U\in \gamma$, we can choose $V \in\gamma$ such that $\text{gyr}[a,b]V\subseteq U$,
and for $V$, there exists $W\in \gamma$ such that $W\oplus W \subseteq V$.
In this case, it follows that $a, b\in W\cap(\ominus W)$.
So we can get $a\oplus b\in W\oplus W\subseteq V\subseteq U$. Additionally by applying Theorem \ref{the1.3} (7), $a\oplus b=\ominus\text{gyr}[a,b](\ominus a\ominus b)\in \ominus\text{gyr}[a,b](W\oplus W)\subseteq \ominus\text{gyr}[a,b]V
\subseteq\ominus U$.
 Therefore, $a\oplus b\in U\cap(\ominus U)$.
Thus we can conclude that $N$ is a subgyrogroup of $G$.

We intend to demonstrate that $N$ is an invariant subgyrogroup of $G$.
Take $h\in N$, $x, y\in G$ and $U \in\gamma$.
By (b), we can find $V \in\gamma$ such that $\text{gyr}[x,y]V\subseteq U$.
Since $h\in N$, it follows that $h \in V\cap(\ominus V)$.
Hence $\text{gyr}[x,y]h\in\text{gyr}[x,y]V\subseteq U$ and $\text{gyr}[x,y]h\in\text{gyr}[x,y](\ominus V)=\ominus\text{gyr}[x,y]V\subseteq \ominus U$.
This implies that $\text{gyr}[x,y]h\in U\cap(\ominus U)$.
Moreover, it implies that $\text{gyr}[x,y](N)\subseteq N$.

By (c), we can find $V \in\gamma$ such that
$\ominus(x\oplus y)\oplus ((V\oplus x)\oplus y)\subseteq U$ and $V\subseteq \ominus(x\oplus y)\oplus ((U\oplus x)\oplus y)$.
Because $\ominus x$ and $\ominus y$ are also elements of $G$,
we have $\ominus(\ominus x\oplus (\ominus y))\oplus ((V\oplus (\ominus x))\oplus (\ominus y))\subseteq U$.
And since $G$ is a gyrocommutative gyrogroup, it can get $(x\oplus y)\oplus ((V\ominus x)\ominus y)=\ominus(\ominus x\oplus (\ominus y))\oplus ((V\oplus (\ominus x))\oplus (\ominus y))\subseteq U$ by Theorem \ref{the2.5com} (1).
Since $h\in N$, it follows that $h \in V\cap(\ominus V)$.
Hence $\ominus(x\oplus y)\oplus ((h\oplus x)\oplus y)\in\ominus(x\oplus y)\oplus ((V\oplus x)\oplus y)\subseteq U$
and
$\ominus(x\oplus y)\oplus ((h\oplus x)\oplus y)\in\ominus(x\oplus y)\oplus ((\ominus V\oplus x)\oplus y)=\ominus((x\oplus y)\oplus ((V\ominus x)\ominus y))\subseteq \ominus U$.
This implies that $\ominus(x\oplus y)\oplus ((h\oplus x)\oplus y)\in U\cap(\ominus U)$.
That is $\ominus(x\oplus y)\oplus ((N\oplus x)\oplus y)\subseteq N$.
The same method yields $N\subseteq \ominus(x\oplus y)\oplus ((N\oplus x)\oplus y)$.
Consequently, by Theorem \ref{pro2.11s} we can deduce that $N$ is an invariant subgyrogroup.

If conditions (a)-(d) are satisfied, then $\bigcap_{U\in\gamma} (\ominus U)=\bigcap_{U\in\gamma}U$. This leads to $N = \bigcap_{U\in\gamma} U$.
To show that $N$ is closed in $G$, we consider $x\in G\setminus N$. We select $U\in\gamma$ such that $x\notin \ominus U$. Additionally, let $V\in\gamma$ such that $V\oplus V\subseteq U$ and we can get $\ominus V\oplus(\ominus V)\subseteq\ominus U$
by Theorem \ref{the2.5com} (1).
We claim that $(V\oplus x)\cap N =\emptyset$. Suppose otherwise, $x \in \ominus V\oplus N \subseteq
\ominus V\oplus(\ominus V)\subseteq\ominus U$. This contradicts the choice of $U$.
Hence, our claim holds, and we can conclude that $(V\oplus x)\cap N =\emptyset$. Therefore, $N$ is an closed invariant subgyrogroupa of $G$.
\end{proof}

\begin{lemma}\label{lem3.17s}
Let $G$ be a paratopological gyrogroup.
Assume that a family $\gamma\subseteq \mathcal{U}$ fulfills the following condition:
\begin{enumerate}
\item[(a)] for each $U \in \gamma$, there exists $V \in \gamma$ such that $V\oplus V\subseteq U$;
\item[(b)] for each $U \in \gamma$ and $x, y\in G$, there exists $V \in\gamma$ such that $\text{gyr}[x,y]V\subseteq U$;
\item[(c)] $\gamma$ is subordinated to $U$, for each $U\in\gamma$;
\item[(d)] $\bigcap\{\ominus V: V\in\gamma\}\subseteq U$, for all $U \in \gamma$.
\end{enumerate}
In that case, the set $N=\bigcap\{U\cap(\ominus U): U\in\gamma\}
=\bigcap\{U: U \in \gamma\}$ forms an invariant subgyrogroup of $G$.
\end{lemma}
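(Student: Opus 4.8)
The plan is to follow the proof of Lemma \ref{lem3.17}, but to invoke condition (d) at the very start: this both identifies the three intersections and makes gyrocommutativity unnecessary. First I would settle the set identity. Writing $A=\bigcap\{U:U\in\gamma\}$ and $B=\bigcap\{\ominus U:U\in\gamma\}$, condition (d) says $B\subseteq U$ for every $U\in\gamma$, hence $B\subseteq A$; applying the involutive bijection $x\mapsto\ominus x$, which commutes with intersections, gives $A\subseteq B$ as well, so $A=B$ and $\ominus A=A$. Therefore $N=\bigcap\{U\cap(\ominus U):U\in\gamma\}=A\cap B=A=B$, i.e. $N=\bigcap\{U:U\in\gamma\}=\bigcap\{\ominus U:U\in\gamma\}$ and $\ominus N=N$; in particular $0\in N$, so $N\neq\emptyset$.

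Next I would check that $N$ is a subgyrogroup via the Subgyrogroup Criterion. Closure under inversion is the identity $\ominus N=N$ just obtained. For closure under $\oplus$, let $a,b\in N$ and $U\in\gamma$; by (a) pick $V\in\gamma$ with $V\oplus V\subseteq U$, and since $a,b\in N\subseteq V$ we get $a\oplus b\in V\oplus V\subseteq U$; letting $U$ range over $\gamma$ yields $a\oplus b\in\bigcap\{U:U\in\gamma\}=N$. (It is precisely the identity $N=\bigcap\{U:U\in\gamma\}$ that shortens this step; in Lemma \ref{lem3.17}, without (d), one also had to control membership in $\ominus U$ via Theorem \ref{the1.3}(7).)

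Finally I would establish invariance of $N$ through the two conditions of Theorem \ref{pro2.11s}. For condition (1): given $h\in N$, $x,y\in G$ and $U\in\gamma$, use (b) to pick $V\in\gamma$ with $\text{gyr}[x,y]V\subseteq U$; then $h\in N\subseteq V$ gives $\text{gyr}[x,y]h\in U$, so $\text{gyr}[x,y]h\in N$ and $\text{gyr}[x,y](N)\subseteq N$. For condition (2): fix $x,y\in G$ and set $f=L_{\ominus(x\oplus y)}\circ R_y\circ R_x$, which by Propositions \ref{pro23} and \ref{pro2.14s} is a bijection of $G$ satisfying $f(W)=\ominus(x\oplus y)\oplus((W\oplus x)\oplus y)$ for $W\subseteq G$. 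The first clause of subordination (c) gives $f(N)\subseteq N$: for each $U\in\gamma$ there is $V\in\gamma$ with $f(V)\subseteq U$, and $h\in N\subseteq V$ forces $f(h)\in U$, so letting $U$ vary gives $f(h)\in N$. The second clause of (c) gives $N\subseteq f(N)$: for each $U\in\gamma$ there is $V\in\gamma$ with $V\subseteq f(U)$, hence $n\in N\subseteq V\subseteq f(U)$ for every $U\in\gamma$, so $n\in\bigcap_{U\in\gamma}f(U)=f\big(\bigcap_{U\in\gamma}U\big)=f(N)$, the middle equality holding since $f$ is injective. Thus $\ominus(x\oplus y)\oplus((N\oplus x)\oplus y)=f(N)=N$, and Theorem \ref{pro2.11s} yields $N\unlhd G$.

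I do not expect a genuine obstacle here, since condition (d) is exactly what replaces the gyrocommutativity used in Lemma \ref{lem3.17}. The one step needing care is the inclusion $N\subseteq f(N)$, which relies on the bijection $f$ commuting with arbitrary intersections, so that $\bigcap_{U\in\gamma}f(U)=f(\bigcap_{U\in\gamma}U)$; this is where the bijectivity of the left and right translations (Propositions \ref{pro23} and \ref{pro2.14s}) is genuinely used.
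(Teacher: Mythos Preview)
Your argument is correct and follows essentially the same route as the paper's proof: establish $N=\bigcap_{U\in\gamma}U=\bigcap_{U\in\gamma}(\ominus U)$ from (d), check closure under $\oplus$ via (a), then verify the two conditions of Theorem~\ref{pro2.11s} using (b) and the two clauses of (c). Your treatment of the inclusion $N\subseteq f(N)$ via the bijectivity of $f=L_{\ominus(x\oplus y)}\circ R_y\circ R_x$ and the identity $\bigcap_{U\in\gamma}f(U)=f\bigl(\bigcap_{U\in\gamma}U\bigr)$ is in fact more careful than the paper, which asserts this inclusion without making the injectivity step explicit.
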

\begin{proof}
If conditions (a)-(d) are satisfied, then $\bigcap_{U\in\gamma} (\ominus U)=\bigcap_{U\in\gamma}U$.
It is evident that $N =\ominus N$.
Take $a, b\in N$ and $U \in\gamma$.
By (a), we can find $V \in\gamma$ such that $V\oplus V\subseteq U$,
then $a, b\in V$.
It follows that $a\oplus b\in V\oplus V\subseteq U$.
So we can conclude that $N$ is a subgyrogroup of $G$.

We intend to demonstrate that $N$ is an invariant subgyrogroup of $G$.
Take $h\in N$, $x, y\in G$ and $U \in\gamma$.
By (b), we can find $V \in\gamma$ such that $\text{gyr}[x,y]V\subseteq U$.
Since $h\in N$, it follows that $h \in V$.
Hence $\text{gyr}[x,y]h\in\text{gyr}[x,y]V\subseteq U$.
This implies that $\text{gyr}[x,y](N)\subseteq N$.

By (c), we can find $V \in\gamma$ such that
$\ominus(x\oplus y)\oplus ((V\oplus x)\oplus y)\subseteq U$ and $V\subseteq \ominus(x\oplus y)\oplus ((U\oplus x)\oplus y)$.
Since $h\in N$, it follows that $h \in V$.
Hence $\ominus(x\oplus y)\oplus ((h\oplus x)\oplus y)\in\ominus(x\oplus y)\oplus ((V\oplus x)\oplus y)\subseteq U$
and
$h\in V\subseteq \ominus(x\oplus y)\oplus ((U\oplus x)\oplus y)$.
This implies that
$\ominus(x\oplus y)\oplus ((N\oplus x)\oplus y)\subseteq N$ and $N\subseteq \ominus(x\oplus y)\oplus ((N\oplus x)\oplus y)$.
Consequently, by Theorem \ref{pro2.11s} we can deduce that $N$ is an invariant subgyrogroup.
\end{proof}

Given a topological property $\mathcal{P}$, we say that a paratopological (topological) gyrogroup $G$
is {\it projectively} $\mathcal{P}$ if for every neighborhood $U$ of the identity in $G$, there exists a continuous homomorphism
$p: G\rightarrow H$ onto a paratopological (topological) gyrogroup $H$ with property $\mathcal{P}$ such that $p^{-1}(V)\subseteq U$, for some neighborhood $V$ of the identity in $H$.

The following result stems from the definition of the product topology.

\begin{proposition}\label{the319}
Let \(\mathcal{P}\) represent any class of paratopological (topological) gyrogroups that is closed under finite products. Let \(H\) denote the topological product of a family \(\{H_\alpha : \alpha \in A\}\) of gyrogroups belonging to the class \(\mathcal{P}\). Then every subgyrogroup of \(H\) is projectively $\mathcal{P}$.
\end{proposition}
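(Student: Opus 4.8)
The plan is to use two standard facts about the product $H=\prod_{\alpha\in A}H_{\alpha}$: its topology has a base consisting of open sets that constrain only finitely many coordinates, and its gyrogroup operation (as well as inversion and the gyroautomorphisms) is computed coordinatewise, so that the projection of $H$ onto any subproduct is a continuous gyrogroup homomorphism. Fix a subgyrogroup $K\leq H$ and let $U$ be an arbitrary open neighborhood of the identity $0$ in $K$; I must produce a continuous homomorphism of $K$ onto a paratopological (topological) gyrogroup from $\mathcal{P}$ whose pre-image of some neighborhood of the identity is contained in $U$.

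First I would choose an open set $W$ in $H$ with $U=W\cap K$ and $0\in W$, and then a finite set $F\subseteq A$ together with open neighborhoods $O_{\alpha}$ of the identity in $H_{\alpha}$ for $\alpha\in F$ such that the canonical box $B=\big(\prod_{\alpha\in F}O_{\alpha}\big)\times\big(\prod_{\alpha\in A\setminus F}H_{\alpha}\big)$ satisfies $0\in B\subseteq W$. Next, put $H_{F}=\prod_{\alpha\in F}H_{\alpha}$ and let $\pi_{F}\colon H\to H_{F}$ be the projection; since addition in $H$ is coordinatewise, $\pi_{F}(x\oplus y)=\pi_{F}(x)\oplus\pi_{F}(y)$ for all $x,y\in H$, so $\pi_{F}$ is a gyrogroup homomorphism, and it is continuous by the definition of the product topology. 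Then I would set $p=\pi_{F}|_{K}$ and $L=p(K)=\pi_{F}(K)$, equipped with the subspace topology inherited from $H_{F}$: thus $p\colon K\to L$ is a continuous, surjective gyrogroup homomorphism, and by the Subgyrogroup Criterion $L$ is a subgyrogroup of $H_{F}$. Finally, set $V=L\cap\big(\prod_{\alpha\in F}O_{\alpha}\big)$, an open neighborhood of the identity in $L$; a one-line computation gives $p^{-1}(V)=K\cap\pi_{F}^{-1}\big(\prod_{\alpha\in F}O_{\alpha}\big)=K\cap B\subseteq K\cap W=U$. Once $L\in\mathcal{P}$ is known, $p$ together with $V$ witnesses that $K$ is projectively $\mathcal{P}$.

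The argument is essentially bookkeeping, and the step needing the most attention is the membership $L\in\mathcal{P}$: closure of $\mathcal{P}$ under finite products gives $H_{F}\in\mathcal{P}$, but $L=\pi_{F}(K)$ is only a \emph{subgyrogroup} of $H_{F}$. For the concrete classes to which this proposition is applied below (first-countable $T_{i}$ strongly paratopological gyrogroups for $i=0,1,2$), first countability and each separation axiom $T_{i}$ pass to subgyrogroups, so $L$ again lies in $\mathcal{P}$; more generally one uses that the class in question is inherited by subgyrogroups. Apart from this, I expect no serious obstacle: the coordinatewise description of the operation on $H$ makes the homomorphism property of $\pi_{F}$ immediate, and the inclusion $p^{-1}(V)\subseteq U$ is a direct verification.
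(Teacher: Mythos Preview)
Your argument is correct and is precisely the standard route the paper has in mind; the paper offers no proof beyond the remark that the result ``stems from the definition of the product topology,'' and your construction---choose a basic open box depending on a finite $F\subseteq A$, project to $H_F=\prod_{\alpha\in F}H_\alpha$, and take $V$ as the trace of the box on the image---is exactly that. Your observation that the argument actually needs $\mathcal{P}$ to be inherited by subgyrogroups (so that $L=\pi_F(K)\in\mathcal{P}$, not merely $H_F\in\mathcal{P}$) is a genuine point the paper glosses over; as you note, this is harmless for the classes to which the proposition is applied, but the hypothesis ``closed under finite products'' alone does not suffice for the stated conclusion unless one either adds hereditariness or reads ``onto'' in the definition of projectively $\mathcal{P}$ loosely.
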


\begin{theorem}\label{the319s}
Let \(\mathcal{P}\) be a class of paratopological (topological) gyrogroups, and let \(\tau\) be an infinite cardinal number. Suppose \(G\) is a paratopological (topological) gyrogroup that is projectively $\mathcal{P}$ and has a base \(\mathcal{B}\) of open neighborhoods of the identity such that \(|\mathcal{B}|\leq\tau\). Then \(G\) is topologically isomorphic to a subgyrogroup of the product of a family \(\{H_\alpha : \alpha \in A\}\) of gyrogroups, where each \(H_\alpha\) belongs to the class \(\mathcal{P}\) and \(|A|\leq\tau\).
\end{theorem}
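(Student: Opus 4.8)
The plan is to assemble all the projectivity witnesses indexed by $\mathcal{B}$ into a single diagonal homomorphism and verify it is a topological embedding. First I would invoke projectivity once for each member of $\mathcal{B}$: for every $U\in\mathcal{B}$ fix a continuous homomorphism $p_{U}\colon G\to H_{U}$ onto a gyrogroup $H_{U}\in\mathcal{P}$ and an open neighborhood $V_{U}$ of the identity of $H_{U}$ with $p_{U}^{-1}(V_{U})\subseteq U$. Put $A=\mathcal{B}$, so $|A|\le\tau$, let $H=\prod_{U\in\mathcal{B}}H_{U}$, with coordinatewise operations and the product topology, which is again a paratopological (topological) gyrogroup since products of gyrogroups are formed coordinatewise, and define the diagonal map $\Phi\colon G\to H$ by $\Phi(g)=(p_{U}(g))_{U\in\mathcal{B}}$. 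Since every $p_{U}$ is a homomorphism, $\Phi$ is a homomorphism, so $\Phi(G)$ is a subgyrogroup of $H$ by the Subgyrogroup Criterion (a homomorphic image of a gyrogroup is closed under $\oplus$ and $\ominus$); and since $\pi_{U}\circ\Phi=p_{U}$ is continuous for each coordinate projection $\pi_{U}\colon H\to H_{U}$, the map $\Phi$ is continuous.

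Next I would establish injectivity of $\Phi$. If $\Phi(g)=\Phi(g')$, then applying $p_{U}$ to $\ominus g'\oplus g$ and using that $p_{U}$ is a homomorphism gives $\ominus g'\oplus g\in p_{U}^{-1}(0)\subseteq p_{U}^{-1}(V_{U})\subseteq U$ for every $U\in\mathcal{B}$; as $\mathcal{B}$ is a base of open neighborhoods of $0$ this forces $\ominus g'\oplus g\in\bigcap\mathcal{B}$, hence $\ominus g'\oplus g=0$ and, by the left cancellation law of Theorem \ref{the1.3}, $g=g'$. (The step $\bigcap\mathcal{B}=\{0\}$ is precisely the $T_{1}$ separation axiom on $G$; it is available in all the situations where a faithful embedding is wanted, and without it the argument only yields $\ker\Phi\subseteq\bigcap\mathcal{B}$.)

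The substantive step, which I expect to be the only delicate one, is that $\Phi$ is open onto $\Phi(G)$; everything above is routine bookkeeping. Using homogeneity — left translations on $G$, on $H$, and on $\Phi(G)$ are homeomorphisms by Proposition \ref{pro23}, and $\Phi$ intertwines translation by $g$ with translation by $\Phi(g)$ because it is a homomorphism — it suffices to show that $\Phi(U)$ is a neighborhood of $0_{H}=\Phi(0)$ in the subspace $\Phi(G)$ for every $U\in\mathcal{B}$. For a fixed $U$, I would take $W=\pi_{U}^{-1}(V_{U})$, which is open in $H$: if $\Phi(g)\in\Phi(G)\cap W$ then $p_{U}(g)=\pi_{U}(\Phi(g))\in V_{U}$, so $g\in p_{U}^{-1}(V_{U})\subseteq U$ and hence $\Phi(g)\in\Phi(U)$; thus $\Phi(G)\cap W\subseteq\Phi(U)$. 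Translating this relation by an arbitrary $\Phi(g)$ gives openness at every point, so $\Phi$ is a homeomorphism onto $\Phi(G)$. Combining continuity, injectivity, the homomorphism property and openness onto the image, $\Phi$ is a topological isomorphism of $G$ onto the subgyrogroup $\Phi(G)$ of $H=\prod_{U\in\mathcal{B}}H_{U}$, where each factor lies in $\mathcal{P}$ and $|\mathcal{B}|\le\tau$, as required.
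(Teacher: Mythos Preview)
Your approach is essentially the same as the paper's: form the diagonal of the homomorphisms $p_U$ indexed by $\mathcal{B}$ and check it is a topological isomorphism onto its image. The paper's own proof is a single sentence asserting that this diagonal map works, so your write-up actually supplies the verifications (continuity, injectivity, openness onto the image) that the paper leaves implicit; in particular, your observation that injectivity genuinely needs $\bigcap\mathcal{B}=\{0\}$, i.e.\ the $T_1$ axiom, is a point the paper's proof passes over in silence.
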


\begin{proof}
Let \(\mathcal{B}\) be a base of open neighborhoods of the identity in the topological gyrogroup \(G\), with the property that \(|\mathcal{B}|\leq\tau\).
For each \(U \in \mathcal{B}\), we choose a continuous homomorphism \(f_U : G \to H_U\), where \(H_U \in \mathcal{P}\), such that \(f_U^{-1}(V) \subseteq U\) for some open neighborhood \(V\) of the identity in \(H_U\). The diagonal product \(h\) of the family \(\{f_U : U \in \mathcal{B}\}\) establishes a topological isomorphism of \(G\) onto a topological subgyrogroup of the topological product of the family \(\{H_U : U \in \mathcal{B}\}\). Specifically, for any \(x \in G\) and \(U \in \mathcal{B}\), the \(U\)-th coordinate of \(h(x)\) corresponds to \(f_U(x)\) in \(H_U\).
\end{proof}

\begin{theorem}\label{the3.18s}
A $T_i$ paratopological gyrogroup $G$ is projectively first-countable $T_i$ if and only if it admits a homeomorphic embedding as a subgyrogroup into a product of first-countable $T_i$ paratopological gyrogroups, for $i= 0, 1, 2, 3, 3\frac{1}{2}$.
\end{theorem}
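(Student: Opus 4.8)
The plan is to obtain both implications as immediate consequences of Proposition \ref{the319} and Theorem \ref{the319s}, applied to the class
\[
\mathcal{P}_i=\{\,H:H\text{ is a first-countable }T_i\text{ paratopological gyrogroup}\,\}.
\]
First I would record the only structural fact about $\mathcal{P}_i$ that the argument needs: $\mathcal{P}_i$ is closed under finite products and under passing to subgyrogroups. This is routine — a finite product, and a subspace, of a first-countable space is first-countable, and each of the separation axioms $T_i$ for $i\in\{0,1,2,3,3\frac{1}{2}\}$ is both finitely productive and hereditary (for $i=3,3\frac{1}{2}$ one uses the heredity and finite productivity of regularity and complete regularity). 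With this in place the theorem becomes pure bookkeeping.

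For the \emph{only if} direction, I would start from a $T_i$ paratopological gyrogroup $G$ that is projectively first-countable $T_i$, i.e.\ projectively $\mathcal{P}_i$. Let $\mathcal{U}$ be the family of all open neighbourhoods of the identity $0$ and put $\tau=\max\{|\mathcal{U}|,\omega\}$, so that $\tau$ is an infinite cardinal and $\mathcal{U}$ is a base of open neighbourhoods of $0$ of cardinality $\le\tau$. Then I would invoke Theorem \ref{the319s} with the class $\mathcal{P}_i$ and the cardinal $\tau$ to produce a family $\{H_\alpha:\alpha\in A\}\subseteq\mathcal{P}_i$ and a topological isomorphism of $G$ onto a subgyrogroup of $\prod_{\alpha\in A}H_\alpha$; since each $H_\alpha$ is a first-countable $T_i$ paratopological gyrogroup, this is exactly the desired homeomorphic embedding of $G$ as a subgyrogroup.

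For the \emph{if} direction, I would assume $G$ is topologically isomorphic to a subgyrogroup of $H=\prod_{\alpha\in A}H_\alpha$ with every $H_\alpha\in\mathcal{P}_i$. Because $\mathcal{P}_i$ is closed under finite products, Proposition \ref{the319} applies and shows that every subgyrogroup of $H$ is projectively $\mathcal{P}_i$; since being projectively $\mathcal{P}_i$ is preserved by topological isomorphisms, $G$ is projectively $\mathcal{P}_i$, that is, projectively first-countable $T_i$. (That $G$ is itself $T_i$ is automatic, $T_i$ being hereditary, in agreement with the hypothesis of the statement.)

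I do not expect a genuine obstacle here: the substantive content is already carried by Proposition \ref{the319} — whose proof rests on the fact that a basic open set in a product depends on finitely many coordinates and that the projection of a subgyrogroup onto a finite subproduct is again a subgyrogroup — and by Theorem \ref{the319s}, which runs the diagonal-product embedding. The only point requiring any care, and the one I would check explicitly, is the closure of the property ``first-countable $T_i$'' under finite products and subgyrogroups, since Proposition \ref{the319} needs the finite-product closure while the reduction to finite subproducts inside the two cited results needs the hereditary part.
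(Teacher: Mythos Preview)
Your proposal is correct and follows exactly the paper's own approach: the paper's proof of this theorem consists of the single sentence ``We can directly derive the conclusion from Proposition \ref{the319} and Theorem \ref{the319s}.'' You have supplied the details the paper omits---in particular, the check that the class $\mathcal{P}_i$ is closed under finite products and hereditary, which is precisely what is needed for Proposition \ref{the319} to apply (since the projection of a subgyrogroup to a finite subproduct must land in $\mathcal{P}_i$) and for the embedding coming from Theorem \ref{the319s} to yield the claim.
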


\begin{proof}
We can directly derive the conclusion from Proposition \ref{the319} and Theorem \ref{the319s}.
\end{proof}

In the subsequent results, we provide a characterization of projectively first-countable $T_0$ (strongly) paratopological gyrogroups.

\begin{theorem}\label{the3.26}
A strongly paratopological gyrocommutative gyrogroup
$G$ is topologically isomorphic to a subgyrogroup of a topological product
of first-countable $T_0$ strongly paratopological gyrocommutative gyrogroups if and only if $G$ is $T_0$ and $\omega$-balanced.
\end{theorem}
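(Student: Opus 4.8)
\emph{Necessity.} If $G$ embeds as a subgyrogroup into $\Pi=\prod_{i\in I}G_i$ with each $G_i$ first-countable $T_0$ strongly paratopological gyrocommutative, then $G$ is $T_0$ (the $T_0$ axiom is productive and hereditary); and each $G_i$ is $\omega$-balanced by Theorem \ref{the3.19}, hence so is $\Pi$ by Proposition \ref{pro3014s}, hence so is the subgyrogroup $G$ by Proposition \ref{pro3.12s}. For the converse, assume $G$ is $T_0$, $\omega$-balanced and strongly paratopological gyrocommutative, and let $\mathcal{P}$ be the class of first-countable $T_0$ strongly paratopological gyrocommutative gyrogroups. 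By Theorem \ref{the319s} (applied with $\tau$ the cardinality of any base at $0$) it is enough to show that $G$ is projectively $\mathcal{P}$; the embedding is then the diagonal product of the resulting quotient maps, one per member of a fixed base at $0$. Fix a neighbourhood $U$ of $0$. We take the base $\mathcal{U}$ witnessing the ``strongly'' property to be the family of all gyr-invariant open neighbourhoods of $0$; then the $\omega$-good sets supplied by Lemma \ref{lem3.16} are themselves gyr-invariant, since a gyroautomorphism is a groupoid automorphism and therefore maps each of the sums defining such a set onto itself, so they lie in $\mathcal{U}$. Choose $U'\in\mathcal{U}$ with $U'\subseteq U$.

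By a closing-off of length $\omega$, build a countable $\gamma\subseteq\mathcal{U}$ with $U'\in\gamma$ closed under: taking halves (Theorem \ref{the1}(1)); common refinement of pairs (Theorem \ref{the1}(4)); subordination to each of its members (invoking $\omega$-balancedness at each stage and then shrinking each new open neighbourhood of $0$ to a member of $\mathcal{U}$, which does not destroy subordination); and containing, below each of its members, an $\omega$-good set (Lemma \ref{lem3.16}). As $\gamma$ is countable and downward directed it has a cofinal decreasing chain, which we take as $\{U_n:n\in\omega\}$ with $U_0=U'$, $U_{n+1}\oplus U_{n+1}\subseteq U_n$, and $U_n$ $\omega$-good for $n\geq 1$. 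Since $\gamma\subseteq\mathcal{U}$, condition (b) of Lemma \ref{lem3.17} is trivial and (a), (c) hold by construction, so by Lemma \ref{lem3.17} (here the gyrocommutativity of $G$ is used) the set $N:=\bigcap\{W\cap(\ominus W):W\in\gamma\}=\bigcap_n(U_n\cap\ominus U_n)$ is an invariant subgyrogroup of $G$; in particular it is an $L$-subgyrogroup, since $\text{gyr}[a,b](N)=N$ for all $a,b\in G$ by Propositions \ref{pro2.11s} and \ref{pro2.2}.

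Let $p=\pi:G\to G/N$ be the canonical projection; by Proposition \ref{pro2.28} it is a continuous open surjective homomorphism onto $G/N$, which is a paratopological gyrogroup (the operation on $G/N$ is continuous because $\pi\times\pi$, being open, is a quotient map) and gyrocommutative by Theorem \ref{pro2.9s}. Since $\pi$ is open and continuous, $\{\pi(W):W\in\mathcal{U}\}$ is a base at $\pi(0)$; as $\pi$ is a homomorphism, $\text{gyr}[\pi(x),\pi(y)](\pi(W))=\pi(\text{gyr}[x,y](W))=\pi(W)$ for $W\in\mathcal{U}$ (using $\text{gyr}[x,y](z)=\ominus(x\oplus y)\oplus(x\oplus(y\oplus z))$, Theorem \ref{the1.3}(6)), so $G/N$ is strongly paratopological. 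Granting that $\{\pi(U_n):n\in\omega\}$ is a base at $\pi(0)$ (so that $G/N$ is first-countable; see below), $G/N$ is $T_0$: it suffices that $\bigcap_n(\pi(U_n)\cap\pi(\ominus U_n))=\{\pi(0)\}$, and if $a\oplus N$ lies in this intersection then $a\in(U_{n+1}\oplus N)\cap((\ominus U_{n+1})\oplus N)$ for each $n$, whence (as $N\subseteq U_{n+1}\cap\ominus U_{n+1}$) $a\in U_{n+1}\oplus U_{n+1}\subseteq U_n$ and, by Lemma \ref{lem19}, $a\in(\ominus U_{n+1})\oplus(\ominus U_{n+1})\subseteq\ominus U_n$, so $a\in N$. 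Hence $H:=G/N\in\mathcal{P}$, and $p^{-1}(\pi(U_1))=U_1\oplus N\subseteq U_1\oplus U_1\subseteq U_0\subseteq U$, so $V:=\pi(U_1)$ works; as $U$ was arbitrary, $G$ is projectively $\mathcal{P}$, and Theorem \ref{the319s} gives the embedding.

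\emph{The main obstacle.} What is left, and is the crux, is the first-countability of $G/N$, i.e. that $\{\pi(U_n):n\in\omega\}$ is a base at $\pi(0)$; equivalently, that every open neighbourhood $O$ of $0$ in $G$ with $O=\pi^{-1}(\pi(O))$ contains some $U_n$. This is exactly where $\omega$-goodness is indispensable: such a neighbourhood conceals uncountably many local ``refinement'' requirements, and the purpose of an $\omega$-good set is to let these be realized by members of the single countable family $\gamma$. Concretely, one shows each $\pi(U_n)$ is $\omega$-good in $G/N$, transporting a witnessing family through $\pi$ and using Lemma \ref{lem2.21}, the gyr-invariance of the members of $\mathcal{U}$, and the invariance of $N$ to push the gyroassociators and the $N$-cosets through the computation; combined with the chain condition $U_{n+1}\oplus U_{n+1}\subseteq U_n$ and $N=\bigcap_n(U_n\cap\ominus U_n)$, this yields that $\{\pi(U_n):n\in\omega\}$ is a base at $\pi(0)$. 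I expect this to be the most delicate step of the argument.
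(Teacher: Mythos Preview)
Your necessity argument and your inductive construction of the countable family $\gamma$ coincide with the paper's. The divergence, and the source of your unfinished ``main obstacle'', is the topology you put on $G/N$.

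You equip $G/N$ with the \emph{quotient} topology (via Proposition \ref{pro2.28}) and then try to prove that $\{\pi(U_n):n\in\omega\}$ is a base at $\pi(0)$. You do not complete this, and your sketch (``show each $\pi(U_n)$ is $\omega$-good in $G/N$ \dots\ this yields that $\{\pi(U_n)\}$ is a base'') does not close the gap: $\omega$-goodness of the individual sets $\pi(U_n)$ says nothing about whether an \emph{arbitrary} saturated open neighbourhood of $0$ contains some $\pi(U_n)$. In the quotient topology there may well be open neighbourhoods of $\pi(0)$ coming from open $O\subseteq G$ with $O\oplus N=O$ that lie in no $\gamma$-set, so first-countability of the quotient topology is genuinely in doubt.

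The paper sidesteps this entirely. It does \emph{not} use the quotient topology: instead it takes $\mathcal{B}=\{p(V):V\in\gamma\}$ and invokes Theorem \ref{the} to manufacture a paratopological gyrogroup topology $\tau$ on $H=G/N$ for which $\mathcal{B}$ is, by construction, a local base at $0_H$. First-countability is then immediate from $|\gamma|\le\omega$; the conditions (1)--(5) of Theorem \ref{the} are verified from properties (i)--(vii) of the $\gamma_k$; and continuity of $p:(G,\text{original})\to(H,\tau)$ follows because $p^{-1}(p(V))=V\oplus N$ is open in $G$. The $\omega$-good sets enter only to secure condition (vi) (hence (3) of Theorem \ref{the}), not to prove anything about a quotient topology. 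So your ``crux'' is an artefact of choosing the wrong topology on $H$; replace the quotient topology by the one generated via Theorem \ref{the} and the obstacle disappears.
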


\begin{proof}
Suppose that $G$ is a subgyrogroup of the product $\Pi=\prod_{i\in I}H_i$
of a family of first-countable paratopological gyrogroups.
By Theorem \ref{the3.19} every first-countable paratopological gyrogroup $H_i$ is $\omega$-balanced.
Then the product gyrogroup $\Pi$ is $\omega$-balanced by Proposition \ref{pro3014s}.
Since by Proposition \ref{pro3.12s}, the property of being $\omega$-balanced is hereditary with respect to taking subgyrogroup, we conclude that $G$ is $\omega$-balanced as well.

 By Theorem \ref{the3.18s}, to prove the sufficiency, it suffices to verify that every $\omega$-balanced strongly paratopological gyrogroup
$G$ is projectively
$T_0$ first-countable.
Let the neighborhood base
$\mathcal{U}$ at $0_G$ of $G$ witness that $G$ is a strongly paratopological gyrogroup.
Take an arbitrary $U_0\in\mathcal{U}$.
 We shall define a continuous homomorphism $p: G\rightarrow H$
 onto a first-countable $T_0$ strongly paratopological gyrocommutative gyrogroup $H$
and find a neighbourhood $V_0$ of
the identity in $H$ such that $p^{-1}(V_0)\subseteq U_0$.
To achieve this, we will construct by induction a countable family $\gamma$ of open neighborhoods of the identity $0_G$ in $G$. We then define $p$ as the natural homomorphism from $G$ onto the quotient gyrogroup $G/N$, where $N$ is given by $N = \bigcap\{U \cap (\ominus U) : U \in \gamma\}$. Consequently, a local base at the identity of $H = G/N$ will be the family $\{p(V) : V \in \gamma\}$.

Let $\mathcal{U}^*(0_G)$ represent the subfamily of $\mathcal{U}$ that consists of $\omega$-good sets. According to Lemma \ref{lem3.16}, $\mathcal{U}^*(0_G)$ serves as a local base for $G$ at $0_G$. By induction, we aim to construct a sequence denoted as $\{\gamma_n: n\in\omega\}$, where each $\gamma_n\subseteq \mathcal{U}^*(0_G)$ for all $n\in\omega$.

Choose $U^*_0\in\mathcal{U}^*(0_G)$ such that $U^*_0 \subseteq U_0$. Define $\gamma_0 = \{U^*_0\}$. Assume that for a given $n \in \omega$, the families $\gamma_0, \ldots, \gamma_n$ have been defined. These families are constructed to meet the following conditions for each index $k \leq n$:

\begin{enumerate}
\item[(i)] $\gamma_k \subseteq \mathcal{U}^*(0_G)$ and $|\gamma_k|\leq\omega$;
\item[(ii)] $\gamma_{k-1} \subseteq\gamma_k$;
\item[(iii)] $\gamma_k$ is closed under finite intersections;
\item[(iv)] for every $U\in \gamma_{k-1}$, there exists $V \in\gamma_{k}$ such that $V\oplus V\subseteq U$;
\item[(v)] the family $\gamma_k$ is subordinated to $U$, for each $U\in \gamma_{k-1}$;
\item[(vi)] for every $U\in \gamma_{k-1}$ and $x\in U$, there exists $V \in\gamma_{k}$ such that $x\oplus V\subseteq U$;
\item[(vii)] for every $U\in\gamma_{k}$ we have
  $\text{gyr}[x,y]U\subseteq U$ for each $x, y\in G$.
\end{enumerate}

Since $\gamma_n$ is countable, it's possible to identify a countable family $\lambda_{n,1}\subseteq\mathcal{U}^*(0_G)$ ensuring for every $U\in\gamma_n$, there exists $V\in\lambda_{n,1}$ for which $V \oplus V \subseteq U$.
Considering that the gyrogroup $G$ is $\omega$-balanced, it is possible to locate a countable family $\lambda_{n,2}\subseteq\mathcal{U}^*(0_G)$ that is subordinated to every $U\in\gamma_n$.
As $\gamma_n\subseteq\mathcal{U}^*(0_G)$, by the definition of $\omega$-good set, there exists a countable family $\lambda_{n,3}\subseteq\mathcal{U}^*(0_G)$ such that for any set $U\in \gamma_n$ and for every $x\in U$, there is $V\in \lambda_{n,3}$ with the property that $x \oplus V \subseteq U$.
Given that $G$ is a strongly paratopological gyrogroup, for each $U \in \mathcal{U}^*(0_G)$,
it holds that $\text{gyr}[x,y]U\subseteq U$ for all $x, y\in G$.
Define $\gamma_{n+1}$ to be the minimal family that containes $\gamma_n\cup\bigcup_{i=1}^3 \lambda_{n,i}$ and is also closed under finite intersections. Evidently, $\gamma_{n+1}$ is countable and fulfills the conditions (i) through (vii), thereby concluding our construction.

Observe that the set $\gamma = \bigcup_{n \in \omega} \gamma_n$ is countable and meets the requirements (a)-(c) as specified in Lemma \ref{lem3.17}.
Therefore, the set
$N=\bigcap \{U \cap (\ominus U): U \in \gamma\}$ forms an invariant subgyrogroup of $G$.
Consider the algebraic gyrogroup \(G/N \). By Theorem \ref{pro2.9s}, \(G/N\) is a gyrocommutative gyrogroup.
Define $p:G\to G/N$ as the canonical homomorphism.
And $p$ is continuous by Proposition \ref{pro2.28}.
Set $\mathcal{B}=\{p(V):V\in \gamma\}$, and let $H = G/N$.

We assert that the family $\mathcal{B}$ has the following characteristics:
\begin{enumerate}
\item[(1)] for each $A, B\in\mathcal{B}$ there exists $C\in\mathcal{B}$ such that $C\subseteq A\cap B$;
\item[(2)] for every $A \in\mathcal{B}$, there exists $B \in\mathcal{B}$ such that $B\oplus B\subseteq A$;
\item[(3)] for all $A \in\mathcal{B}$ and $a \in A$, there exists $B \in\mathcal{B}$ such that $a\oplus B\subseteq A$;
\item[(4)] $\mathcal{B}$ is subordinated to each $A \in\mathcal{B}$;
\item[(5)] for each $A\in\mathcal{B}$ and $a, b\in H$ we have
  $\text{gyr}[a,b]A\subseteq A$.
\end{enumerate}

Indeed, since $\gamma$ is closed under finite intersections, property (1) is satisfied. Statement (iv) supports property (2). Properties (3)-(5) are derived from statements (v)-(vii).
According to Theorem \ref{the}, the satisfaction of conditions (1)-(5) leads to the conclusion that it is possible to introduce a topology $\tau$ on $H$, whereby $(H, \tau)$ qualifies as a paratopological gyrogroup with $\mathcal{B}$ acting as a local base at $0_H$.
In fact, the neighborhood base
$\mathcal{B}$ at $0_H$ of $H$ witness that $H$ is a strongly paratopological gyrogroup.
The countability of $\mathcal{B}$ ensures that $(H, \tau)$ meets the conditions for being first-countable.

To show the paratopological gyrogroup $(H, \tau)$ is $T_0$, consider $y\in H\setminus\{0_H\}$, where $0_H$ is the
identity of $H$. Select $z\in G$ such that $p(z)=y$. The difference $y \neq 0_H$ implies that $z\notin N=\bigcap\{U\cap(\ominus U): U\in\gamma\}$.
Consequently, we can find $W\in\gamma$ for which $z\notin W\cap(\ominus W)$.
Choose $W_1\in\gamma$ that complies with $W_1\oplus W_1\subseteq W$. We assert that $y\notin p(W_1)\cap (\ominus p(W_1))$.
Otherwise it would be possible to identify elements $u, v\in W_1$ where $y=p(z)= p(u)=\ominus p(v)$, leading to $\ominus u\oplus z, v\oplus z\in N$. This situation would imply $z\in (u\oplus N) \cap (\ominus v\oplus N)\subseteq (W_1\oplus W_1)\cap(\ominus W_1\oplus (\ominus W_1))\subseteq W\cap(\ominus W)$, which is a contradiction of the selection of $W$. Thus, $(H,\tau)$ is $T_0$.

Finally, select $U\in\gamma$ such that $U\oplus U \subseteq U^*_0$.
The set $V_0 = p(U)$ then forms an open neighborhood of $0_H$ in $H$, with $p^{-1}(V_0)=U\oplus N\subseteq U\oplus U\subseteq U^*_0\subseteq U_0$.
\end{proof}

Utilizing the preceding theorems and the weakly Hausdorff number, we describe the subgyrogroups of products of $T_1$ paratopological gyrogroups that are first-countable.

\begin{theorem}\label{the3.30s}
A strongly paratopological gyrogroup
$G$ is topologically isomorphic to a subgyrogroup of a topological product
of first-countable $T_1$ strongly paratopological gyrogroups if and only if $G$ is $T_1$, $\omega$-balanced and $wHs(G)\leq\omega$.
\end{theorem}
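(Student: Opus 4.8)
The plan is to mirror the proof of Theorem~\ref{the3.26}, the one genuinely new ingredient being the hypothesis $wHs(G)\le\omega$, which is exactly what upgrades the target gyrogroups from $T_0$ to $T_1$.

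\emph{Necessity.} Suppose $G$ is topologically isomorphic to a subgyrogroup of $\Pi=\prod_{i\in I}H_i$, with each $H_i$ a first-countable $T_1$ strongly paratopological gyrogroup. Then $\Pi$, and hence $G$, is $T_1$, since $T_1$ is hereditary and productive. By Theorem~\ref{the3.19} each $H_i$ is $\omega$-balanced, so $\Pi$ is $\omega$-balanced by Proposition~\ref{pro3014s}, and hence $G$ is $\omega$-balanced by Proposition~\ref{pro3.12s}. First-countability gives $\chi(H_i)\le\omega$, so $wHs(H_i)\le\psi(H_i)\le\chi(H_i)\le\omega$ by Proposition~\ref{pro3.3}; then $wHs(\Pi)\le\omega$ by Proposition~\ref{pro3.3s}, and finally $wHs(G)\le wHs(\Pi)\le\omega$ by Proposition~\ref{pro3.2s}.

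\emph{Sufficiency.} By Theorem~\ref{the3.18s} it is enough to show that a strongly paratopological gyrogroup $G$ that is $T_1$, $\omega$-balanced, and satisfies $wHs(G)\le\omega$ is projectively first-countable $T_1$. Fix a neighbourhood base $\mathcal{U}$ at $0_G$ witnessing strongness, fix $U_0\in\mathcal{U}$, and let $\mathcal{U}^*(0_G)\subseteq\mathcal{U}$ be the local base of $\omega$-good sets from Lemma~\ref{lem3.16}. I would carry out verbatim the inductive construction of $\{\gamma_n:n\in\omega\}$ from the proof of Theorem~\ref{the3.26} (countable subfamilies of $\mathcal{U}^*(0_G)$, increasing, closed under finite intersections, realizing conditions (i)--(vii) there), inserting one extra clause: at stage $n$ one also appends a countable family $\lambda_{n,4}\subseteq\mathcal{U}^*(0_G)$ such that for every $U\in\gamma_n$ there is $\mu\subseteq\lambda_{n,4}$ with $\bigcap_{V\in\mu}(\ominus V)\subseteq U$; such $\lambda_{n,4}$ exists precisely because $wHs(G)\le\omega$ (shrinking the members into $\mathcal{U}^*(0_G)$ if needed). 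Put $\gamma=\bigcup_{n\in\omega}\gamma_n$. Then $\gamma$ satisfies conditions (a)--(d) of Lemma~\ref{lem3.17s}, so $N=\bigcap\{U\cap(\ominus U):U\in\gamma\}=\bigcap\{U:U\in\gamma\}$ is an invariant subgyrogroup of $G$. Let $p\colon G\to H:=G/N$ be the canonical homomorphism (continuous by Proposition~\ref{pro2.28}) and $\mathcal{B}=\{p(V):V\in\gamma\}$. Exactly as in Theorem~\ref{the3.26}, $\mathcal{B}$ fulfils the hypotheses of Theorem~\ref{the}, so it is a countable local base at $0_H$ making $H$ a first-countable strongly paratopological gyrogroup; and choosing $U\in\gamma$ with $U\oplus U\subseteq U_0$ gives $p^{-1}(p(U))=U\oplus N\subseteq U\oplus U\subseteq U_0$. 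It remains to verify that $H$ is $T_1$.

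For that last point, take $y\in H\setminus\{0_H\}$ and $z\in G$ with $p(z)=y$. Since $\ker p=N=\bigcap\{U:U\in\gamma\}$, there is $U_*\in\gamma$ with $z\notin U_*$; by clause (iv) pick $V\in\gamma$ with $V\oplus V\subseteq U_*$. If $y\in p(V)$, say $y=p(v)$ with $v\in V$, then $p(\ominus v\oplus z)=\ominus p(v)\oplus p(z)=0_H$, so $\ominus v\oplus z\in N$, and by the left cancellation law (Theorem~\ref{the1.3}(3)) together with $N\subseteq V$ we get $z=v\oplus(\ominus v\oplus z)\in v\oplus N\subseteq V\oplus V\subseteq U_*$, a contradiction. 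Hence $y\notin p(V)$, so $\bigcap_{V\in\gamma}p(V)=\{0_H\}$, which is exactly $T_1$-ness of $H$. I expect no real obstacle here: $wHs(G)\le\omega$ is tailored precisely to supply clause (d) of Lemma~\ref{lem3.17s}, and the $T_1$-check above is short. The only thing requiring care is the bookkeeping in the induction --- keeping $\gamma_{n+1}$ countable and closed under finite intersections while simultaneously realizing (i)--(vii) and the new clause --- but this is the same routine already performed in the proof of Theorem~\ref{the3.26}.
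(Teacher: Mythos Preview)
Your proposal is correct and follows essentially the same approach as the paper: both directions cite the same auxiliary results (Theorem~\ref{the3.19}, Propositions~\ref{pro3014s}, \ref{pro3.12s}, \ref{pro3.2s}, \ref{pro3.3s}, \ref{pro3.3}), the sufficiency repeats the inductive construction of Theorem~\ref{the3.26} with the extra clause furnished by $wHs(G)\le\omega$, invokes Lemma~\ref{lem3.17s} for the invariant subgyrogroup $N=\bigcap\{U:U\in\gamma\}$, and the $T_1$-verification via $z\notin U_*$, $V\oplus V\subseteq U_*$, $z\in v\oplus N\subseteq V\oplus V$ is exactly the paper's argument.
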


\begin{proof}
Suppose that $G$ is a subgyrogroup of the product $\Pi=\prod_{i\in I}H_i$
of a family of first-countable $T_1$ strongly paratopological gyrogroups.
By Theorem \ref{the3.19} every first-countable paratopological gyrogroup $H_i$ is $\omega$-balanced.
Then the product gyrogroup $\Pi$ is $\omega$-balanced by Proposition \ref{pro3014s}.
Since by Proposition \ref{pro3.12s}, the property of being $\omega$-balanced is hereditary with respect to taking subgyrogroup, we conclude that $G$ is $\omega$-balanced as well.
By Propositions \ref{pro3.2s}, \ref{pro3.3s} and \ref{pro3.3} we
have that $wHs(G)\leq wHs(\Pi)\leq\omega$.

To prove the sufficiency, it suffices to verify that every $\omega$-balanced strongly paratopological gyrogroup
$G$ with $wHs(G)\leq\omega$ is projectively
$T_1$ first-countable.
Let the neighborhood base
$\mathcal{U}$ at $0_G$ of $G$ witness that $G$ is a strongly paratopological gyrogroup.
Consider an arbitrary $U_0\in\mathcal{U}$. We will construct a continuous homomorphism $p: G\rightarrow H$ onto a first-countable $T_1$ paratopological gyrogroup $H$
and find a neighbourhood $V_0$ of
the identity in $H$ such that $p^{-1}(V_0)\subseteq U_0$.

Let $\mathcal{U}^*(0_G)$ represent the subfamily of $\mathcal{U}$ that consists of $\omega$-good sets. According to Lemma \ref{lem3.16}, $\mathcal{U}^*(0_G)$ serves as a local base for $G$ at $0_G$. By induction, we aim to construct a sequence denoted as $\{\gamma_n: n\in\omega\}$, where each $\gamma_n\subseteq \mathcal{U}^*(0_G)$ for all $n\in\omega$.

Let $U_0 \in \mathcal{U}^*(0_G)$, and choose $U^*_0$ such that $U^*_0 \subseteq U_0$. Define $\gamma_0 = \{U^*_0\}$. Assume that for a given $n \in \omega$, the families $\gamma_0, \ldots, \gamma_n$ have been defined. These families are constructed to meet the following conditions for each index $k \leq n$:
\begin{enumerate}
\item[(i)] $\gamma_k \subseteq \mathcal{U}^*(0_G)$ and $|\gamma_k|\leq\omega$;
\item[(ii)] $\gamma_{k-1} \subseteq\gamma_k$;
\item[(iii)] $\gamma_k$ is closed under finite intersections;
\item[(iv)] for every $U\in \gamma_{k-1}$, there exists $V \in\gamma_{k}$ such that $V\oplus V\subseteq U$;
\item[(v)] the family $\gamma_k$ is subordinated to $U$, for each $U\in \gamma_{k-1}$;
\item[(vi)] for every $U\in \gamma_{k-1}$ and $x\in U$, there exists $V \in\gamma_{k}$ such that $x\oplus V\subseteq U$;
\item[(vii)] for every $U\in\gamma_{k}$ we have
  $\text{gyr}[x,y]U\subseteq U$ for each $x, y\in G$;
\item[(viii)] $\bigcap_{V\in\gamma_k}(\ominus V)\subseteq U$ for every $U\in\gamma_{k-1}$.
\end{enumerate}

Given that the family $\gamma_n$ is countable, we can select a countable subfamily $\lambda_{n,1} \subseteq \mathcal{U}^*(0_G)$ ensuring that for each set $U \in \gamma_n$ there exists $V \in \lambda_{n,1}$ satisfying $V \oplus V \subseteq U$.
Considering that the gyrogroup $G$ is $\omega$-balanced, it is possible to locate a countable family $\lambda_{n,2}\subseteq\mathcal{U}^*(0_G)$ that is subordinated to every $U\in\gamma_n$.
As $\gamma_n\subseteq\mathcal{U}^*(0_G)$, there exists a countable family $\lambda_{n,3}\subseteq\mathcal{U}^*(0_G)$ such that for every for any set $U\in \gamma_n$ and for every $x\in U$, there is $V\in \lambda_{n,3}$ with the property that $x \oplus V \subseteq U$.
Given that $G$ is a strongly paratopological gyrogroup, for each $U \in \mathcal{U}^*(0_G)$,
it holds that $\text{gyr}[x,y]U\subseteq U$ for all $x, y\in G$.
Based on the assumption $wHs(G)\leq\omega$, we can extract a countable subfamily $\lambda_{n,4} \subseteq\mathcal{U}^*(0_G)$ satisfying $\bigcap_{V\in\lambda_{n,4}}(\ominus V)\subseteq U$ for every $U\in\gamma_{n}$.
Define $\gamma_{n+1}$ to be the minimal family that containes $\gamma_n\cup\bigcup_{i=1}^4 \lambda_{n,i}$ and is also closed under finite intersections. Evidently, $\gamma_{n+1}$ is countable and fulfills the conditions (i)-(viii), thereby concluding our construction.

Observe that the set $\gamma = \bigcup_{n \in \omega} \gamma_n$ is countable and meets the requirements (a)-(d) as specified in Lemma \ref{lem3.17s}.
Therefore, the set
$N = \bigcap \{U: U \in \gamma\}$ forms an invariant subgyrogroup of $G$.
Let's consider the algebraic gyrogroup $G/N$.
Define $p: G \to G/N$ as the canonical homomorphism. Set $\mathcal{B} = \{p(V) : V \in \gamma\}$, and let $H = G/N$.

We assert that the family $\mathcal{B}$ has the following characteristics:
\begin{enumerate}
\item[(1)] for each $A, B\in\mathcal{B}$ there exists $C\in\mathcal{B}$ such that $C\subseteq A\cap B$;
\item[(2)] for every $A \in\mathcal{B}$, there exists $B \in\mathcal{B}$ such that $B\oplus B\subseteq A$;
\item[(3)] for all $A \in\mathcal{B}$ and $a \in A$, there exists $B \in\mathcal{B}$ such that $a\oplus B\subseteq A$;
\item[(4)] $\mathcal{B}$ is subordinated to each $A \in\mathcal{B}$;
\item[(5)] for each $A\in\mathcal{B}$ and $a, b\in H$ we have
  $\text{gyr}[a,b]A\subseteq A$.
\end{enumerate}

Indeed, since $\gamma$ is closed under finite intersections, property (1) is satisfied. Statement (iv) supports property (2). Properties (3)-(5) are derived from statements (v)-(vii).
According to Theorem \ref{the}, the satisfaction of conditions (1)-(5) leads to the conclusion that it is possible to introduce a topology $\tau$ on $H$, whereby $(H, \tau)$ qualifies as a paratopological gyrogroup with $\mathcal{B}$ acting as a local base at $0_H$.
In fact, the neighborhood base
$\mathcal{B}$ at $0_H$ of $H$ witness that $H$ is a strongly paratopological gyrogroup.
The countability of $\mathcal{B}$ ensures that $(H, \tau)$ meets the conditions for being first-countable.

To demonstrate that the paratopological gyrogroup $(H,\tau)$ is $T_1$, consider $y\in H\setminus\{0_H\}$, where $0_H$ denotes the identity of $H$. Select $x\in G$ such that $p(x)= y$. Given that $y \neq 0_H$, it follows that $x\notin N=\bigcap\{V: V\in\gamma\}$.
Consequently,
there exists $V\in\gamma$ such that $x\notin V$.
Choose $W\in\gamma$ that meets the condition $W \oplus W \subseteq V$. We assert that $y\notin p(W)$.
Alternatively, suppose we can find $u\in W$ such that $y = p(x) = p(u)$. Consequently, $\ominus u\oplus x\in N$. This implies that $x\in u\oplus N\subseteq W\oplus W\subseteq V$, which contradicts the selection of $V$. Therefore, we can conclude that $(H,\tau)$ is $T_1$.

Finally, select $U\in\gamma$ such that $U\oplus U \subseteq U^*_0$.
The set $V_0 = p(U)$ then forms an open neighborhood of $0_H$ in $H$, with $p^{-1}(V_0)=U\oplus N\subseteq U\oplus U\subseteq U^*_0\subseteq U_0$.
\end{proof}

\begin{corollary}
Every $\omega$-balanced, $T_1$ strongly paratopological gyrogroup $G$, which possesses a countable pseudocharacter, supports a continuous isomorphism mapping onto a first-countable $T_1$ strongly paratopological gyrogroup.
\end{corollary}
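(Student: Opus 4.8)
The plan is to combine the countability of the pseudocharacter with the embedding theorem \ref{the3.30s}, and then to glue countably many projections together via a diagonal product.

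First, since $G$ is $T_1$ and $\psi(G)\leq\omega$, the identity $0_G$ is a $G_\delta$-subset of $G$, so we may fix a countable family $\{U_n:n\in\omega\}\subseteq\mathcal{U}$ with $\bigcap_{n\in\omega}U_n=\{0_G\}$. Next, since $G$ is $T_1$, Proposition \ref{pro3.3} yields $wHs(G)\leq\psi(G)\leq\omega$; as $G$ is moreover $\omega$-balanced, Theorem \ref{the3.30s} applies, so $G$ embeds as a subgyrogroup into a topological product of first-countable $T_1$ strongly paratopological gyrogroups. The class $\mathcal{P}$ of first-countable $T_1$ strongly paratopological gyrogroups is closed under finite products, so by Proposition \ref{the319} the gyrogroup $G$ is projectively $\mathcal{P}$. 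Hence, for every $n\in\omega$ there are a gyrogroup $H_n\in\mathcal{P}$, a continuous homomorphism $p_n\colon G\to H_n$ onto $H_n$, and an open neighborhood $V_n$ of the identity of $H_n$ such that $p_n^{-1}(V_n)\subseteq U_n$.

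Then I would form the diagonal product $p=\Delta_{n\in\omega}p_n\colon G\to\prod_{n\in\omega}H_n$, which is a continuous homomorphism because each of its coordinates is. Its kernel satisfies $\ker p=\bigcap_{n\in\omega}\ker p_n\subseteq\bigcap_{n\in\omega}p_n^{-1}(V_n)\subseteq\bigcap_{n\in\omega}U_n=\{0_G\}$, and a gyrogroup homomorphism with trivial kernel is injective by the left cancellation law; thus $p$ is a continuous isomorphism of $G$ onto $H:=p(G)$, viewed as a subgyrogroup of $\prod_{n\in\omega}H_n$ with the subspace topology. It remains to verify that $H$ is a first-countable $T_1$ strongly paratopological gyrogroup, and this is the only point needing genuine checking. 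A countable product of first-countable spaces is first-countable and first-countability is hereditary; $T_1$ is productive and hereditary; and a product of strongly paratopological gyrogroups is a strongly paratopological gyrogroup (use the products of the witnessing neighborhood bases), while a subgyrogroup of a strongly paratopological gyrogroup is again one (restrict the witnessing base to the subgyrogroup, exactly as in the proofs of Propositions \ref{pro3.12s} and \ref{pro3.2s}, noting that $\text{gyr}[x,y]$ restricted to a subgyrogroup containing $x$ and $y$ is an automorphism of it). Therefore $H\in\mathcal{P}$ and $p\colon G\to H$ is the desired continuous isomorphism. The only real obstacle is this last bit of bookkeeping, namely that the three relevant properties pass to countable products and to subgyrogroups; once a countable pseudobase at $0_G$ is in hand, everything else is immediate from Theorem \ref{the3.30s} and Proposition \ref{the319}.
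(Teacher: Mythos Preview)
Your proof is correct and follows essentially the same strategy as the paper's. The paper first invokes Theorem~\ref{the3.30s} to embed $G$ as a subgyrogroup of a product $\prod_{i\in I}H_i$ of first-countable $T_1$ strongly paratopological gyrogroups, and then uses the countable pseudocharacter to select a countable $J\subseteq I$ so that the restricted projection $p_J|_G$ is injective; you instead pass through Proposition~\ref{the319} to obtain projectivity and then build the countably many homomorphisms $p_n$ directly from a fixed countable pseudobase at $0_G$ before taking the diagonal. These two presentations are interchangeable, and your explicit verification that first-countability, $T_1$, and the ``strongly'' property pass to countable products and subgyrogroups fills in a point the paper leaves implicit.
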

\begin{proof}
Given that $\psi(G) \leq \omega$, it follows that $wHs(G) \leq \omega$. According to Theorem \ref{the3.30}, $G$ is topologically isomorphic to a subgyrogroup of a topological product $H =\prod_{i \in I} H_i$, where each $H_i$ is a first-countable, $T_1$, strongly paratopological gyrogroup. Owing to $G$'s countable pseudocharacter, a countable subset $J \subseteq I$ can be identified such that the projection $p_J: H \rightarrow \prod_{i \in J} H_i$ restricted to $G$ serves as a continuous monomorphism.
Hence, $p_J|_G: G \rightarrow p_J(G)$ constitutes the necessary continuous isomorphism.
\end{proof}

\begin{theorem}\label{the3.30}
A strongly paratopological gyrogroup
$G$ is topologically isomorphic to a subgyrogroup of a topological product
of first-countable Hausdorff strongly paratopological gyrogroups if and only if $G$ is Hausdorff, $\omega$-balanced and $Hs(G)\leq\omega$.
\end{theorem}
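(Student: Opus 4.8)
The plan is to imitate the proof of Theorem~\ref{the3.30s} almost verbatim, systematically replacing the weakly Hausdorff number by the Hausdorff number, the hypothesis $wHs(G)\le\omega$ by $Hs(G)\le\omega$, and every occurrence of the set $\ominus V$ by the cobox difference $V\boxminus V$. Necessity is routine: if $G$ is (topologically isomorphic to) a subgyrogroup of $\Pi=\prod_{i\in I}H_i$ with each $H_i$ first-countable Hausdorff strongly paratopological, then $G$ is Hausdorff because Hausdorffness is inherited by subspaces and by topological products; each $H_i$ is $\omega$-balanced by Theorem~\ref{the3.19}, so $\Pi$ is $\omega$-balanced by Proposition~\ref{pro3014s} and hence so is $G$ by Proposition~\ref{pro3.12s}; and $Hs(H_i)\le\omega$ by Proposition~\ref{pro3.4s1}, so $Hs(\Pi)\le\omega$ by Proposition~\ref{pro3.3s1}, whence $Hs(G)\le Hs(\Pi)\le\omega$ by Proposition~\ref{pro3.2s1}.

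For the sufficiency, by Theorem~\ref{the3.18s} it suffices to show that every $\omega$-balanced Hausdorff strongly paratopological gyrogroup $G$ with $Hs(G)\le\omega$ is projectively first-countable Hausdorff (the witnessing quotient $H$ below will in addition be strongly paratopological). Fix the base $\mathcal{U}$ at $0_G$ witnessing strongness, fix $U_0\in\mathcal{U}$, and pass (Lemma~\ref{lem3.16}) to the local base $\mathcal{U}^*(0_G)$ of $\omega$-good sets. Build by induction, exactly as in Theorem~\ref{the3.30s}, a countable family $\gamma=\bigcup_{n\in\omega}\gamma_n\subseteq\mathcal{U}^*(0_G)$ with $U^*_0\in\gamma_0$ and $U^*_0\subseteq U_0$, satisfying conditions (i)--(vii) of that proof together with the following replacement of (viii): $(\mathrm{viii}')$ $\bigcap_{V\in\gamma_k}(V\boxminus V)\subseteq U$ for every $U\in\gamma_{k-1}$. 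The inductive step is unchanged except that, since $Hs(G)\le\omega$, at stage $n$ we also pick a countable $\lambda_{n,4}\subseteq\mathcal{U}^*(0_G)$ with $\bigcap_{V\in\lambda_{n,4}}(V\boxminus V)\subseteq U$ for every $U\in\gamma_n$, and let $\gamma_{n+1}$ be the smallest family containing $\gamma_n\cup\bigcup_{i=1}^{4}\lambda_{n,i}$ that is closed under finite intersections. Since $0\boxminus v=\ominus v$, hence $\ominus V\subseteq V\boxminus V$ (Proposition~\ref{pro3.2}), condition $(\mathrm{viii}')$ implies condition (d) of Lemma~\ref{lem3.17s}, so $N=\bigcap\{U\cap(\ominus U):U\in\gamma\}=\bigcap\{U:U\in\gamma\}$ is an invariant subgyrogroup of $G$; and since $v=v\boxminus 0$ gives $V\subseteq V\boxminus V$, we get $N\subseteq\bigcap_{V\in\gamma}(V\boxminus V)\subseteq N$, i.e.\ $N=\bigcap_{V\in\gamma}(V\boxminus V)$. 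Let $p\colon G\to H:=G/N$ be the canonical homomorphism, continuous and open by Proposition~\ref{pro2.28}, and put $\mathcal{B}=\{p(V):V\in\gamma\}$. As in Theorem~\ref{the3.30s}, $\mathcal{B}$ satisfies conditions (1)--(5) of Theorem~\ref{the}, so by Theorem~\ref{the} it is a local base at $0_H$ for a topology making $H$ a strongly paratopological gyrogroup, first-countable because $|\mathcal{B}|\le\omega$; and for $U\in\gamma$ with $U\oplus U\subseteq U^*_0$ we have $p^{-1}(p(U))=U\oplus N\subseteq U\oplus U\subseteq U^*_0\subseteq U_0$, so $V_0:=p(U)$ is as required.

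The one genuinely new point is that $H$ is Hausdorff. Let $y\in H\setminus\{0_H\}$ and choose $x\in G$ with $p(x)=y$; then $x\notin N=\bigcap_{V\in\gamma}(V\boxminus V)$, so fix $V\in\gamma$ with $x\notin V\boxminus V$ and then $W\in\gamma$ with $W\oplus W\subseteq V$ (condition (iv)), noting $W\subseteq V$ since $0\in W$. If $y\in p(W)\boxminus p(W)$, say $y=p(w_1)\boxminus p(w_2)$ with $w_1,w_2\in W$, then by Theorem~\ref{the1.3}(5) and the homomorphism property $p(x\oplus w_2)=y\oplus p(w_2)=p(w_1)$, so $w_1\in(x\oplus w_2)\oplus N\subseteq(x\oplus W)\oplus W=x\oplus(W\oplus W)\subseteq x\oplus V$ (using $N\subseteq W$ and Lemma~\ref{lem2.21}); hence $w_1=x\oplus v$ for some $v\in V$, and by Theorem~\ref{the1.3}(5), $x=w_1\boxminus v\in W\boxminus V\subseteq V\boxminus V$, contradicting the choice of $V$. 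Thus $p(W)$ and $y\oplus p(W)$ are disjoint open neighbourhoods of $0_H$ and $y$, so $H$ is Hausdorff.

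I expect the main obstacle to be organizational rather than conceptual: one must verify that the enlarged induction (with the extra family $\lambda_{n,4}$ supplied by $Hs(G)\le\omega$) still delivers countable $\gamma_n$ satisfying (i)--(vii) and $(\mathrm{viii}')$, and that the cobox difference behaves well under $p$ --- concretely, that the identity $(a\boxminus b)\oplus b=a$ together with Lemma~\ref{lem2.21} (the only place where the ``strongly'' hypothesis is used, via $(x\oplus W)\oplus W=x\oplus(W\oplus W)$) allows the $T_1$-style separation argument of Theorem~\ref{the3.30s} to upgrade to the Hausdorff-style one, without any appeal to gyrocommutativity.
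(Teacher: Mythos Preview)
Your proposal is correct and follows essentially the same route as the paper's proof: the same inductive construction of $\gamma$ with the extra condition $\bigcap_{V\in\gamma_k}(V\boxminus V)\subseteq U$, the same invocation of Lemma~\ref{lem3.17s} via $\ominus V\subseteq V\boxminus V$, and the same Hausdorff verification by finding $V\in\gamma$ with $x\notin V\boxminus V$ and showing $p(W)\cap(y\oplus p(W))=\emptyset$ for $W\in\gamma$ with $W\oplus W\subseteq V$. The only cosmetic difference is that the paper argues $x\in(u\oplus N)\boxminus v\subseteq(W\oplus W)\boxminus W\subseteq V\boxminus V$ directly (without Lemma~\ref{lem2.21}), whereas you pass through $w_1\in x\oplus V$ and then apply $(a\oplus b)\boxminus b=a$; both are equally valid.
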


\begin{proof}
Suppose that $G$ is a subgyrogroup of the product $\Pi=\prod_{i\in I}H_i$
of a family of first-countable paratopological gyrogroups.
By Theorem \ref{the3.19} every first-countable paratopological gyrogroup $H_i$ is $\omega$-balanced.
Then the product gyrogroup $\Pi$ is $\omega$-balanced by Proposition \ref{pro3014s}.
Since by Proposition \ref{pro3.12s}, the property of being $\omega$-balanced is hereditary with respect to taking subgyrogroup, we conclude that $G$ is $\omega$-balanced as well.
By Propositions \ref{pro3.2s1},
\ref{pro3.3s1} and \ref{pro3.4s1} we
have that $Hs(G)\leq Hs(\Pi)\leq\omega$.

To prove the sufficiency, it suffices to verify that every $\omega$-balanced strongly paratopological gyrogroup
$G$ with $Hs(G)\leq\omega$ is projectively
Hausdorff first-countable.
Let the neighborhood base
$\mathcal{U}$ at $0_G$ of $G$ witness that $G$ is a strongly paratopological gyrogroup.
Consider an arbitrary $U_0\in\mathcal{U}$. We will construct a continuous homomorphism $p: G\rightarrow H$ onto a first-countable Hausdorff paratopological gyrogroup $H$
and find a neighbourhood $V_0$ of
the identity in $H$ such that $p^{-1}(V_0)\subseteq U_0$.

Let $\mathcal{U}^*(0_G)$ represent the subfamily of $\mathcal{U}$ that consists of $\omega$-good sets. According to Lemma \ref{lem3.16}, $\mathcal{U}^*(0_G)$ serves as a local base for $G$ at $0_G$. By induction, we aim to construct a sequence denoted as $\{\gamma_n: n\in\omega\}$, where each $\gamma_n\subseteq \mathcal{U}^*(0_G)$ for all $n\in\omega$.

Let $U_0 \in \mathcal{U}^*(0_G)$, and choose $U^*_0$ such that $U^*_0 \subseteq U_0$. Define $\gamma_0 = \{U^*_0\}$. Assume that for a given $n \in \omega$, the families $\gamma_0, \ldots, \gamma_n$ have been defined. These families are constructed to meet the following conditions for each index $k \leq n$:
\begin{enumerate}
\item[(i)] $\gamma_k \subseteq \mathcal{U}^*(0_G)$ and $|\gamma_k|\leq\omega$;
\item[(ii)] $\gamma_{k-1} \subseteq\gamma_k$;
\item[(iii)] $\gamma_k$ is closed under finite intersections;
\item[(iv)] for every $U\in \gamma_{k-1}$, there exists $V \in\gamma_{k}$ such that $V\oplus V\subseteq U$;
\item[(v)] the family $\gamma_k$ is subordinated to $U$, for each $U\in \gamma_{k-1}$;
\item[(vi)] for every $U\in \gamma_{k-1}$ and $x\in U$, there exists $V \in\gamma_{k}$ such that $x\oplus V\subseteq U$;
\item[(vii)] for every $U\in\gamma_{k}$ we have
  $\text{gyr}[x,y]U\subseteq U$ for each $x, y\in G$;
\item[(viii)] $\bigcap_{V\in\gamma_k}(V\boxminus V)\subseteq U$ for every $U\in\gamma_{k-1}$.
\end{enumerate}

Given that the family $\gamma_n$ is countable, we can select a countable subfamily $\lambda_{n,1} \subseteq \mathcal{U}^*(0_G)$ ensuring that for each set $U \in \gamma_n$ there exists $V \in \lambda_{n,1}$ satisfying $V \oplus V \subseteq U$.
Considering that the gyrogroup $G$ is $\omega$-balanced, it is possible to locate a countable family $\lambda_{n,2}\subseteq\mathcal{U}^*(0_G)$ that is subordinated to every $U\in\gamma_n$.
As $\gamma_n\subseteq\mathcal{U}^*(0_G)$, there exists a countable family $\lambda_{n,3}\subseteq\mathcal{U}^*(0_G)$ such that for every for any set $U\in \gamma_n$ and for every $x\in U$, there is $V\in \lambda_{n,3}$ with the property that $x \oplus V \subseteq U$.
Given that $G$ is a strongly paratopological gyrogroup, for each $U \in \mathcal{U}^*(0_G)$,
it holds that $\text{gyr}[x,y]U\subseteq U$ for all $x, y\in G$.
Based on the assumption $Hs(G)\leq\omega$, we can extract a countable subfamily $\lambda_{n,4} \subseteq\mathcal{U}^*(0_G)$ satisfying $\bigcap_{V\in\lambda_{n,4}}(V\boxminus V)\subseteq U$ for every $U\in\gamma_{n}$.
Define $\gamma_{n+1}$ to be the minimal family that containes $\gamma_n\cup\bigcup_{i=1}^4 \lambda_{n,i}$ and is also closed under finite intersections. Evidently, $\gamma_{n+1}$ is countable and fulfills the conditions (i) through (viii), thereby concluding our construction.

Since $Hs(G)\leq\omega$, we can get $wHs(G)\leq Hs(G)\leq\omega$ by Proposition \ref{pro3.2s}.
Thus the set $\gamma = \bigcup_{n \in \omega} \gamma_n$ is countable and meets the requirements (a)-(d) as specified in Lemma \ref{lem3.17s}.
Therefore, the set
$N = \bigcap \{U: U \in \gamma\}$ forms an invariant subgyrogroup of $G$.
Let's consider the algebraic gyrogroup $G/N$.
Define $p: G \to G/N$ as the canonical homomorphism. Set $\mathcal{B} = \{p(V) : V \in \gamma\}$, and let $H = G/N$.

We assert that the family $\mathcal{B}$ has the following characteristics:
\begin{enumerate}
\item[(1)] for each $A, B\in\mathcal{B}$ there exists $C\in\mathcal{B}$ such that $C\subseteq A\cap B$;
\item[(2)] for every $A \in\mathcal{B}$, there exists $B \in\mathcal{B}$ such that $B\oplus B\subseteq A$;
\item[(3)] for all $A \in\mathcal{B}$ and $a \in A$, there exists $B \in\mathcal{B}$ such that $a\oplus B\subseteq A$;
\item[(4)] $\mathcal{B}$ is subordinated to each $A \in\mathcal{B}$;
\item[(5)] for each $A\in\mathcal{B}$ and $a, b\in H$ we have
  $\text{gyr}[a,b]A\subseteq A$.
\end{enumerate}

Indeed, since $\gamma$ is closed under finite intersections, property (1) is satisfied. Statement (iv) supports property (2). Properties (3)-(5) are derived from statements (v)-(vii).
According to Theorem \ref{the}, the satisfaction of conditions (1)-(5) leads to the conclusion that it is possible to introduce a topology $\tau$ on $H$, whereby $(H, \tau)$ qualifies as a paratopological gyrogroup with $\mathcal{B}$ acting as a local base at $0_H$.
In fact, the neighborhood base
$\mathcal{B}$ at $0_H$ of $H$ witness that $H$ is a strongly paratopological gyrogroup.
The countability of $\mathcal{B}$ ensures that $(H, \tau)$ meets the conditions for being first-countable.

To demonstrate that the paratopological gyrogroup $(H,\tau)$ is Hausdorff, consider $y\in H\setminus\{0_H\}$, where $0_H$ denotes the identity of $H$. Select $x\in G$ such that $p(x)= y$. Given that $y \neq 0_H$, it follows that $x\notin N=\bigcap_{V\in\gamma}V\supseteq \bigcap_{V\in\gamma}(V\boxminus V)$.
Consequently,
there exists $V\in\gamma$ such that $x\notin V\boxminus V$, that is, $V \cap(x\oplus V)=\emptyset$.
Choose $W\in\gamma$ that meets the condition $W \oplus W \subseteq V$. We assert that $p(W) \cap(y\oplus p(W))=\emptyset$.
Alternatively, suppose we can find $u, v\in W$ such that $p(u)=y\oplus p(v)$. Consequently, $\ominus u\oplus (x\oplus v)\in N$. This implies that $x\in (u\oplus N)\boxminus v\subseteq(u\oplus W)\boxminus v
\subseteq(W\oplus W)\boxminus W\subseteq V\boxminus V$, which contradicts the selection of $V$. Therefore, we can conclude that $(H,\tau)$ is Hausdorff.

Finally, select $U\in\gamma$ such that $U\oplus U \subseteq U^*_0$.
The set $V_0 = p(U)$ then forms an open neighborhood of $0_H$ in $H$, with $p^{-1}(V_0)=U\oplus N\subseteq U\oplus U\subseteq U^*_0\subseteq U_0$.
\end{proof}

In our Theorems \ref{the3.26}, \ref{the3.30s}, and \ref{the3.30}, we comprehensively detail the projectively first-countable $T_i$ paratopological gyrogroups for $i = 0, 1$. This poses the following questions:
\begin{question}
Provide an intrinsic description of projectively second-countable $T_i$ paratopological gyrogroups for $i= 0, 1, 2, 3, 3\frac{1}{2}$.
\end{question}

\section{Other related results}
Drawing a complete parallel with the concept in group theory, the authors of \cite{ST} investigate substructures within a gyrogroup that are formed by a subset. Specifically, their focus is on the cyclic subgyrogroups that arise from the generation by a single element. The subgyrogroup generated by one-element set $\{a\}$ is called the {\it cyclic
subgyrogroup generated by} $a$, which will be denoted by $\langle a\rangle$.
The explicit description of $\langle a\rangle$ is as following:

Let $G$ be a gyrogroup and let $a$
be an element of $G$. For $m\in \mathbb{Z}$, define recursively the following notation:
$$0\cdot a=0, m\cdot a=a\oplus((m-1)\cdot a), m\geq1, m\cdot a=(-m)\cdot (\ominus a), m<0,$$
$$a\cdot 0=0, a\cdot m=(a\cdot (m-1))a\oplus a, m\geq1, a\cdot m=(\ominus a)\cdot (-m), m<0,$$
and $a\cdot m= m\cdot a$ for all
$m\in \mathbb{Z}$.

\begin{example}
In \cite{ST} T. Suksumran exhibited the gyrogroup $G_{8}=\{0,1,\ldots,7\}$, whose addition table is presented in Table 1.
In $G_{8}$, there are two gyroautomorphisms denoted by $A$ and $I$.
The transformation of $A$ is given in cyclic notation by $A=(4,6)(5,7)$ and $I$ stands for
the identity transformation.
The gyration table for $G_{8}$ is presented in Table 2.
It is easily to see that $\langle 1\rangle=\{0,1,2,3\}$ is the cyclic
subgroup of $G_{8}$.
From the calculations, it can be determined that $2\cdot 1=1\oplus1=3$, $3\cdot 1=1\oplus(1\oplus1)=2$, $4\cdot 1=1\oplus(1\oplus(1\oplus1))=0$.
\end{example}

\begin{center}
\begin{tabular}{|c|cccccccc|}
\hline
$\oplus$&0& 1  & 2 & 3 & 4  & 5 & 6 & 7\\
\hline
0&0 & 1  & 2 & 3 & 4  & 5 & 6 & 7 \\
1 &  1& 3& 0& 2& 7& 4 &5 &6\\
2 & 2 &0 &3& 1& 5& 6& 7& 4\\
3 & 3 &2 &1 &0& 6& 7& 4 &5\\
4 &4 &5 &7 &6& 3& 2& 0 &1\\
5&5& 6& 4 &7& 2& 0& 1& 3\\
6&6& 7 &5 &4 &0& 1 &3& 2\\
7&7& 4& 6 &5& 1& 3& 2 &0\\
\hline
\end{tabular}
\end{center}
\begin{center}
 {Table 1. Addition table for the gyrogroup $G_{8}$ \cite{ST}.}
\end{center}

\begin{center}
\begin{tabular}{|c|cccccccc|}
\hline
gyr &0& 1  & 2 & 3 & 4  & 5 & 6 & 7\\
\hline
0&$I$ & $I$  &$I$ &$I$& $I$& $I$ &$I$&$I$\\
1 & $I$&$I$& $I$& $I$&$A$& $A$& $A$& $A$ \\
2 & $I$&$I$& $I$& $I$&$A$& $A$& $A$& $A$ \\
3 & $I$ &$I$ &$I$& $I$& $I$& $I$& $I$& $I$\\
4 &$I$ &$A$& $A$&$I$& $I$& $A$& $I$& $A$\\
5 &$I$ &$A$& $A$&$I$&  $A$& $I$& $A$& $I$\\
6 &$I$ &$A$& $A$&$I$&  $I$& $A$& $I$& $A$\\
7 &$I$ &$A$& $A$&$I$&  $A$& $I$& $A$& $I$\\
\hline
\end{tabular}
\end{center}
\begin{center}
 {Table 2. Gyration table for $G_{8}$ \cite{ST}.}
\end{center}

The authors have proved the following theorems, which will be instrumental in the subsequent proof process.
\begin{theorem}\cite{ST}\label{the4.10}
Let $a$ be an element of a gyrogroup. For all $m, k\in \mathbb{Z}$, $(m\cdot a)\oplus (k\cdot a)=(m+k)\cdot a$.
\end{theorem}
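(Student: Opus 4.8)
The plan is to reduce the statement to the fact that the cyclic subgyrogroup $\langle a\rangle=\{m\cdot a:m\in\mathbb{Z}\}$ carries no nontrivial gyration, i.e. $\text{gyr}[m\cdot a,k\cdot a]=\text{id}_G$ for all $m,k\in\mathbb{Z}$. Once this is established, the left and right gyroassociative laws collapse to ordinary associativity on $\langle a\rangle$, so $\langle a\rangle$ is an abelian group, and $(m\cdot a)\oplus(k\cdot a)=(m+k)\cdot a$ is the usual law of exponents, obtained by a short induction on $k$ with a split on the sign of $k$. I would nonetheless carry out the last step by a direct induction so as not to rely on the closedness of $\langle a\rangle$.

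I would begin with $\text{gyr}[a,a]=\text{id}_G$: taking $x=0$ in condition $(G4)$ of Definition \ref{Def:gyr} gives $\text{gyr}[a,a]=\text{gyr}[0\oplus a,a]=\text{gyr}[0,a]=\text{id}_G$ by Theorem \ref{the1.3}(12). Then, by induction on $n\geq 0$, prove \emph{simultaneously} that $\text{gyr}[n\cdot a,a]=\text{id}_G$ and $(n\cdot a)\oplus a=(n+1)\cdot a$: in the inductive step the left loop property (Theorem \ref{the1.3}(2)) gives $\text{gyr}[(n+1)\cdot a,a]=\text{gyr}[(n\cdot a)\oplus a,a]=\text{gyr}[n\cdot a,a]=\text{id}_G$, while the right gyroassociative law (Theorem \ref{the1.3}(1)) gives $((n+1)\cdot a)\oplus a=(a\oplus(n\cdot a))\oplus a=a\oplus((n\cdot a)\oplus\text{gyr}[n\cdot a,a](a))=a\oplus((n\cdot a)\oplus a)=a\oplus((n+1)\cdot a)=(n+2)\cdot a$, using the inductive hypothesis and the definition of $(n+2)\cdot a$. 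Running the same induction with $\ominus a$ in place of $a$, and using inversive symmetry (Theorem \ref{the1.3}(9)), also yields $\text{gyr}[a,n\cdot a]=\text{id}_G$ for $n\geq 0$ and the analogous statements for $\ominus a$.

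Next I would prove the inverse relation $\ominus(n\cdot a)=n\cdot(\ominus a)$ for $n\geq 0$ by induction, using gyrosum inversion (Theorem \ref{the1.3}(7)) together with $\text{gyr}[a,n\cdot a]=\text{id}_G$: $\ominus((n+1)\cdot a)=\ominus(a\oplus(n\cdot a))=\ominus(n\cdot a)\oplus(\ominus a)=(n\cdot(\ominus a))\oplus(\ominus a)=(n+1)\cdot(\ominus a)$; hence $\ominus(m\cdot a)=(-m)\cdot a$ for every $m\in\mathbb{Z}$. Using this, even symmetry $\text{gyr}[\ominus x,\ominus y]=\text{gyr}[x,y]$ (Theorem \ref{the1.3}(11)), and the ``$\pm1$-rules'' $(n\cdot a)\oplus a=(n+1)\cdot a$ and $(n\cdot a)\oplus(\ominus a)=(n-1)\cdot a$ (the second obtained for $n\geq 1$ from the right gyroassociative law and $\text{gyr}[a,(n-1)\cdot a]=\text{id}_G$, and both extended to all $m\in\mathbb{Z}$ by passing to $\ominus a$ and the definition of $m\cdot a$ for $m<0$), I would upgrade the gyration triviality to all integers: for $m\geq 0$ iterating the left loop property gives $\text{gyr}[m\cdot a,\ominus a]=\text{gyr}[(m-1)\cdot a,\ominus a]=\cdots=\text{gyr}[0,\ominus a]=\text{id}_G$, and for $m<0$ even symmetry reduces $\text{gyr}[m\cdot a,\ominus a]$ to $\text{gyr}[(-m)\cdot a,a]=\text{id}_G$; symmetrically $\text{gyr}[m\cdot a,a]=\text{id}_G$ for all $m\in\mathbb{Z}$.

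Finally, fixing $m\in\mathbb{Z}$ and inducting on $k\geq 0$, the left gyroassociative law (Theorem \ref{the1.3}(2)) and $\text{gyr}[m\cdot a,a]=\text{id}_G$ give $(m\cdot a)\oplus((k+1)\cdot a)=(m\cdot a)\oplus(a\oplus(k\cdot a))=((m\cdot a)\oplus a)\oplus(k\cdot a)=((m+1)\cdot a)\oplus(k\cdot a)=(m+1+k)\cdot a$ by the $+1$-rule and the inductive hypothesis; for $k<0$ one writes $k\cdot a=(-k)\cdot(\ominus a)$ and runs the mirror induction with $\text{gyr}[m\cdot a,\ominus a]=\text{id}_G$ and the $-1$-rule, the elementary cancellations being handled by Theorem \ref{the1.3}(3). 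I expect the main obstacle to be the bookkeeping of the third paragraph: propagating gyration-triviality and the $\pm1$-rules from non-negative to all integer indices without circular reasoning, which forces one to settle the $n\geq 0$ cases for both $a$ and $\ominus a$ before touching the negative indices.
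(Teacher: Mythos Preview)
The paper does not supply its own proof of this theorem: it is stated with the citation \cite{ST} and no argument is given in the text. Your proposal is correct and is essentially the standard argument from \cite{ST}: one shows inductively that $\text{gyr}[n\cdot a,a]=\text{id}_G$ and $(n\cdot a)\oplus a=(n+1)\cdot a$ for $n\geq 0$, uses gyrosum inversion to get $\ominus(n\cdot a)=n\cdot(\ominus a)$, extends gyration-triviality to all integer pairs via the loop properties and even symmetry, and then finishes by induction on $k$. There is nothing to compare against in this paper beyond the citation, and your bookkeeping in the third paragraph (handling negative indices without circularity by first settling the non-negative cases for both $a$ and $\ominus a$) is exactly the right way to avoid the usual pitfall.
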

\begin{theorem}\cite{ST}\label{the4.10s}
 Any gyrogroup generated by one element is a cyclic group.
\end{theorem}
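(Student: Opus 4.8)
The plan is to show that if $G = \langle a\rangle$ is generated by a single element $a$, then $G$ coincides with the set $\{m\cdot a : m\in\mathbb{Z}\}$ of integer multiples of $a$, and that on this set the operation $\oplus$ is associative; hence $G$ is a (cyclic, abelian) group.

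First I would establish $\langle a\rangle = \{m\cdot a : m\in\mathbb{Z}\}$. By Theorem \ref{the4.10}, $(m\cdot a)\oplus(k\cdot a) = (m+k)\cdot a$ for all $m,k\in\mathbb{Z}$; in particular, taking $k=-m$ gives $(m\cdot a)\oplus((-m)\cdot a) = 0\cdot a = 0$, so $\ominus(m\cdot a) = (-m)\cdot a$ lies in $\{m\cdot a : m\in\mathbb{Z}\}$, and this set is also closed under $\oplus$. By the Subgyrogroup Criterion it is therefore a subgyrogroup, and it contains $a = 1\cdot a$, so $\langle a\rangle\subseteq\{m\cdot a : m\in\mathbb{Z}\}$. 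For the reverse inclusion, any subgyrogroup containing $a$ contains $m\cdot a$ for every $m\in\mathbb{Z}$ by a straightforward induction on $|m|$ from the recursive definition and closure under $\oplus$ and $\ominus$; hence $\{m\cdot a : m\in\mathbb{Z}\}\subseteq\langle a\rangle$.

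Next I would verify associativity on $\langle a\rangle$. For $x = m\cdot a$, $y = k\cdot a$, $z = l\cdot a$, two applications of Theorem \ref{the4.10} give
$(x\oplus y)\oplus z = ((m+k)\cdot a)\oplus(l\cdot a) = (m+k+l)\cdot a = (m\cdot a)\oplus((k+l)\cdot a) = x\oplus(y\oplus z)$.
Together with the identity element $0$ and the inverses $\ominus(m\cdot a) = (-m)\cdot a$ noted above, this shows $(\langle a\rangle,\oplus)$ is a group; it is generated by the single element $a$, so it is a cyclic group (and abelian, which also follows directly from $(m+k)\cdot a = (k+m)\cdot a$).

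I do not expect a serious obstacle: the real content is already packaged in Theorem \ref{the4.10}. The one point requiring care is the bookkeeping around the recursive definition of $m\cdot a$ — in particular treating the cases $m=0$, $m>0$, $m<0$ when proving the equality $\langle a\rangle = \{m\cdot a : m\in\mathbb{Z}\}$ — together with the remark that the left and right ``power'' notations coincide ($a\cdot m = m\cdot a$), so no ambiguity arises. As a sanity check one can note that, since $\oplus$ is now associative on $\langle a\rangle$, Theorem \ref{the1.3}(6) forces $\text{gyr}[x,y]$ to restrict to the identity on $\langle a\rangle$ for all $x,y\in\langle a\rangle$, which is consistent with $\langle a\rangle$ being a genuine subgyrogroup that happens to be a group.
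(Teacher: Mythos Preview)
The paper does not actually prove this theorem; it is stated with a citation to \cite{ST} and no proof is given. Your argument is correct and is the natural way to deduce the result from Theorem \ref{the4.10}: once $(m\cdot a)\oplus(k\cdot a)=(m+k)\cdot a$ is available, closure, inverses, and associativity on $\{m\cdot a:m\in\mathbb{Z}\}$ are all immediate, and the identification of this set with $\langle a\rangle$ follows from the Subgyrogroup Criterion exactly as you outline. Your closing sanity check that $\text{gyr}[x,y]$ restricts to the identity on $\langle a\rangle$ is also correct (via Theorem \ref{the1.3}(6) and associativity), and is consistent with the Subgyrogroup Criterion as stated in the paper.
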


\begin{definition}\cite{JX2}
A paratopological gyrogroup $G$ is called topologically periodic if for each
$x\in G$ and every neighborhood $U$ of the identity there exists an integer $n$ such that
$n\cdot x\in U$.
\end{definition}
It is evident that the cyclic subgyrogroup generated by $a$
within a paratopological gyrogroup is topologically periodic.

\begin{proposition}\label{pro3.4}
Every topologically periodic paratopological gyrogroup $G$ satisfies the inequality $wHs(G)\leq\omega$.
\end{proposition}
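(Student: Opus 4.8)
The plan is as follows. Since the weakly Hausdorff number is defined only for $T_1$ paratopological gyrogroups, I take $G$ to be $T_1$ and topologically periodic, fix a neighborhood $U$ of $0$, and aim to exhibit a countable family $\gamma$ of neighborhoods of $0$ with $\bigcap_{V\in\gamma}(\ominus V)\subseteq U$. First I would choose, by continuity of $\oplus$ (cf.\ Theorem~\ref{the1}(1)), a neighborhood $W$ of $0$ with $W\oplus W\subseteq U$. Then, for each integer $m\geq 1$, using joint continuity of the $m$-fold right-associated operation $(a_1,\dots,a_m)\mapsto a_1\oplus(a_2\oplus(\cdots\oplus a_m))$, which sends $(0,\dots,0)$ to $0\in W$, I would pick an open neighborhood $V_m$ of $0$ with $\underbrace{V_m\oplus(V_m\oplus(\cdots\oplus V_m))}_{m}\subseteq W$, and set $\gamma=\{V_m:m\geq 1\}$, a countable family of neighborhoods of $0$.

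Next I would take an arbitrary $z\in\bigcap_{m\geq 1}(\ominus V_m)$ and put $x=\ominus z$; then $x\in V_m$ for every $m$ and $z=\ominus x$, so it suffices to prove $\ominus x\in U$ (the case $x=0$ being trivial). The first step is to observe that every positive multiple of $x$ is ``small'': since $x\in V_k$, the recursion defining $k\cdot x$ gives $k\cdot x=\underbrace{x\oplus(x\oplus(\cdots\oplus x))}_{k}\in\underbrace{V_k\oplus(V_k\oplus(\cdots\oplus V_k))}_{k}\subseteq W$ for every $k\geq 1$. The second step uses topological periodicity to produce a large exponent: as $z=\ominus x\neq 0$ and $G$ is $T_1$, there is an open neighborhood $W'$ of $0$ with $z\notin W'$, and applying topological periodicity to $z$ and the neighborhood $W\cap W'$ yields an integer $p\geq 1$ with $p\cdot z\in W\cap W'$; since $z\notin W'$ we must have $p\geq 2$. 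By Theorem~\ref{the4.10}, $p\cdot z=p\cdot((-1)\cdot x)=(-p)\cdot x$, so $(-p)\cdot x\in W$, while $(p-1)\cdot x\in W$ by the first step (here $p-1\geq 1$). Applying Theorem~\ref{the4.10} once more,
\[
z=\ominus x=(-1)\cdot x=\bigl((p-1)\cdot x\bigr)\oplus\bigl((-p)\cdot x\bigr)\in W\oplus W\subseteq U,
\]
so $\bigcap_{m\geq 1}(\ominus V_m)\subseteq U$ and $wHs(G)\leq\omega$.

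The heart of the argument is the identity $\ominus x=((p-1)\cdot x)\oplus((-p)\cdot x)$: it rewrites the a priori uncontrolled element $\ominus x$ as the sum of a high positive multiple of $x$ — automatically small because $x$ lies in all the $V_m$ — and a single negative multiple that topological periodicity pushes into $W$. The main obstacle, and the only delicate point, is guaranteeing $p\geq 2$: if periodicity only yielded $p=1$ the identity would collapse to $\ominus x=\ominus x$, so one must use $T_1$-ness (needed in any case for $wHs$ to be defined) to separate $z$ from $0$ by $W'$. One small caveat concerns the reading of ``topologically periodic'': I use ``for each $x$ and each neighborhood $U$ of $0$ there is an integer $n\geq 1$ with $n\cdot x\in U$''; should the definition also permit negative $n$, the same conclusion follows by applying the hypothesis to $\ominus z$ rather than to $z$ in the step that produces $p$. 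The remaining verifications — countability of $\gamma$ and the legitimacy of the computations with $n\cdot a$ — are routine consequences of Theorem~\ref{the4.10}.
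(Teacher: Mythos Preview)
Your proof is correct and follows essentially the same strategy as the paper's: both arguments build a countable family of neighborhoods so that all positive multiples of the ``small'' element land in a fixed $W$, then invoke topological periodicity to get one more multiple in $W$, and finally use Theorem~\ref{the4.10} to write the target element as a two-term sum in $W\oplus W\subseteq U$. The only cosmetic differences are that the paper uses a halving sequence $V_{i+1}\oplus V_{i+1}\subseteq V_i$ rather than choosing each $V_m$ directly, and that it disposes of the case $n=1$ by the trivial observation $x\in V_1\subseteq U$ rather than using $T_1$ to force $p\geq 2$ (your $T_1$ detour is harmless but unnecessary: even if $p=1$ you would get $z\in W\subseteq U$ directly).
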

\begin{proof}
Consider $U$ as a given neighborhood around the identity element 0 in the gyrogroup $G$, and let $\{V_i : i \in \omega\}$ represent a sequence of neighborhoods of 0 where $V_0 = U$ and $V_{i+1}\oplus V_{i+1}\subseteq V_i$ for every $i\in\omega$.

We aim to show that the $F=\bigcap\{\ominus V_i:i\in\omega\}\subseteq U$. This result would indicate that $wHs(G)\leq\omega$. Suppose $x\in F$, it follows that $x\in \ominus V_i$ for every $i\in \omega$. Consequently, $\ominus x\in V_i$ for all $i\in \omega$.
Since $G$ is topologically periodic, it is possible to select $n\in \omega\setminus\{0\}$ so that $n\cdot x\in V_1$.
If $n =1$, then $x\in U$ is obvious. Thus we can assume that $n>1$. Clearly, $(n-1)\cdot(\ominus x)\in \underbrace{((V_{i}\oplus V_{i})\oplus V_{i})\cdots \oplus V_{i}}_{n-1}$ for each $i\in\omega$.
From $V_{i+1}\oplus V_{i+1}\subseteq V_i$ for every $i\in\omega$, it follows that
$(((V_{n-1}\oplus V_{n-1})\oplus V_{n-2})\oplus\ldots\oplus V_3)\oplus V_2\subset V_1$.
Select an element $i_0\in \omega$ such that $i_0\geq n-1$. With this choice, it follows that $\underbrace{((V_{i_0}\oplus V_{i_0})\oplus V_{i_0})\cdots \oplus V_{i_0}}_{n-1}\subseteq V_1$.
By Theorem \ref{the4.10} we can get $(n\cdot x)\oplus((n-1)\cdot(\ominus x))=(n\cdot x)\oplus(-(n-1)\cdot x)=x$.
Then $x=(n\cdot x)\oplus((n-1)\cdot(\ominus x))\in V_1\oplus (\underbrace{((V_{i_0}\oplus V_{i_0})\oplus V_{i_0})\cdots \oplus V_{i_0}}_{n-1})\subseteq V_1\oplus V_1\subseteq U$.
\end{proof}

\begin{corollary}
The inequality $wHs(H)\leq\omega$ is satisfied for every cyclic subgyrogroup $H$ of the gyrogroup $G$.
\end{corollary}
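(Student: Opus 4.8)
The plan is to obtain the corollary as an immediate specialisation of Proposition \ref{pro3.4}; here $G$ is of course understood to be a paratopological gyrogroup, as required for $wHs$ to make sense. The remark preceding Proposition \ref{pro3.4} records that the cyclic subgyrogroup $\langle a\rangle$ generated by any element $a$ of a paratopological gyrogroup is topologically periodic, so the whole argument reduces to checking that $H=\langle a\rangle$ is a paratopological gyrogroup in its own right (with the topology inherited from $G$) and then applying Proposition \ref{pro3.4} to it.

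First I would verify that $H$ is indeed a paratopological gyrogroup. By the Subgyrogroup Criterion, $H$ is a subgyrogroup of $G$: it is closed under $\oplus$ and under inversion, shares the identity $0$ with $G$, and the restriction of each $\text{gyr}[a,b]$ to $H$ is an automorphism of $H$. Since the operation $\oplus\colon G\times G\to G$ is continuous and $H\oplus H\subseteq H$, its restriction $H\times H\to H$ is continuous for the subspace topology, so $(H,\oplus)$ with the induced topology is a paratopological gyrogroup. (By Theorem \ref{the4.10s} one may even say that $H$ is a cyclic group, although this is not needed.)

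Next I would record that $H$ is topologically periodic: for $x\in H$ we have $x=m\cdot a$ for some $m\in\mathbb{Z}$, and by Theorem \ref{the4.10} every multiple of $x$ is again a multiple of $a$, namely $n\cdot x=(nm)\cdot a$; combining this with the remark before Proposition \ref{pro3.4} shows that for each neighbourhood $U$ of $0$ in $H$ and each $x\in H$ there is a nonzero integer $n$ with $n\cdot x\in U$. Proposition \ref{pro3.4}, applied to the paratopological gyrogroup $H$, then yields $wHs(H)\leq\omega$, which is exactly the assertion of the corollary.

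I do not anticipate a genuine obstacle; the statement is a direct consequence of Proposition \ref{pro3.4}. The only points requiring a little care are routine bookkeeping: reading ``topologically periodic'' with a nonzero exponent $n$ (exactly as it is used in the proof of Proposition \ref{pro3.4}), and noting that passing to the subspace topology preserves joint continuity of the operation, so that the hypotheses of Proposition \ref{pro3.4} really do hold for $H$. If one wishes $wHs(H)$ to be defined strictly as in the paper (i.e.\ for $T_1$ gyrogroups), it suffices to observe in addition that $H$, being a subspace of $G$, is $T_1$ whenever $G$ is.
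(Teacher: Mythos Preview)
Your proposal is correct and follows exactly the paper's intended approach: the paper gives no explicit proof of this corollary, so it is meant as an immediate consequence of Proposition~\ref{pro3.4} together with the remark just before it that every cyclic subgyrogroup of a paratopological gyrogroup is topologically periodic. Your additional verification that $H$ inherits the structure of a paratopological gyrogroup is fine bookkeeping but goes slightly beyond what the paper spells out.
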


Let $\tau$ be an infinitely cardinal number. Recall that a space $X$ is a $P_{\tau}$-{\it space} if for any family $\gamma$ of open neighborhoods such that $|\gamma|\leq\tau$ the set $\bigcap_{U\in \gamma} U$ is open in $X$.
If $\tau=\omega$, then $X$ is called a {\it P-space}.

\begin{theorem}\label{the3.9}
Consider $G$ as a strongly paratopological gyrogroup, and let $\tau=wHs(G)$.
If $G$ is also a $P_{\tau}$-space, then $G$ is a strongly topological gyrogroup.
\end{theorem}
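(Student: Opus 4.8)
The plan is to reduce everything to continuity of the inversion at the identity and then feed the defining family of $wHs(G)$ into the $P_\tau$-property. First I would invoke Lemma \ref{pro3.1}: since the neighbourhood base $\mathcal{U}$ at $0$ witnesses that $G$ is a strongly paratopological gyrogroup, to conclude that $G$ is a strongly topological gyrogroup it is enough to check that the inverse operation $\ominus(\cdot)\colon G\to G$ is continuous at $0$, i.e. that for every neighbourhood $U$ of $0$ there is a neighbourhood $W$ of $0$ with $\ominus W\subseteq U$.

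Next, fixing such a $U$, I would use the definition of $\tau=wHs(G)$ to obtain a family $\gamma$ of neighbourhoods of $0$ with $|\gamma|\le\tau$ and $\bigcap_{V\in\gamma}(\ominus V)\subseteq U$. Replacing each member of $\gamma$ by an open neighbourhood of $0$ contained in it (which only makes the sets $\ominus V$ smaller, so the inclusion is preserved), I may assume every $V\in\gamma$ is open. Then I would set $W=\bigcap_{V\in\gamma}V$; since $G$ is a $P_\tau$-space and $|\gamma|\le\tau$, the set $W$ is open, and it is a neighbourhood of $0$ because $0\in W$. Finally, if $x\in W$ then $x\in V$, hence $\ominus x\in\ominus V$, for every $V\in\gamma$, so $\ominus x\in\bigcap_{V\in\gamma}(\ominus V)\subseteq U$; thus $\ominus W\subseteq U$, and Lemma \ref{pro3.1} finishes the proof.

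The argument is essentially formal, so there is no serious obstacle; the only points that need a moment's attention are that the witnessing family in the definition of $wHs(G)$ may be taken to consist of open sets (so that the $P_\tau$-property genuinely applies to $\bigcap\gamma$), and the elementary set inclusion $\ominus\bigl(\bigcap_{V\in\gamma}V\bigr)\subseteq\bigcap_{V\in\gamma}(\ominus V)$ — which is in fact an equality because $\ominus(\cdot)$ is a bijection, though only the stated inclusion is used.
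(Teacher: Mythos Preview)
Your proposal is correct and follows essentially the same route as the paper: reduce to continuity of inversion at $0$ via Lemma~\ref{pro3.1}, use the defining family for $wHs(G)$ to get $\gamma$ with $\bigcap_{V\in\gamma}(\ominus V)\subseteq U$, intersect $\gamma$ to obtain an open neighbourhood by the $P_\tau$-property, and conclude. The paper in fact writes the inclusion $\ominus\bigl(\bigcap_{V\in\gamma}V\bigr)\subseteq\bigcap_{V\in\gamma}(\ominus V)$ as an equality (as you note it is, since $\ominus$ is a bijection), but otherwise the arguments coincide.
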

\begin{proof}
In order to demonstrate that $G$ qualifies as a strongly topological gyrogroup, it suffices to verify the continuity of the inverse operation at the identity $0$ in $G$, as established by Lemma \ref{pro3.1}.

Consider any arbitrary open neighborhood $U$ of the identity element 0 in $G$.
Given that $\tau=wHs(G)$, there exists a family $\gamma$ of open neighborhoods at 0 such that $|\gamma|\leq\tau$ and $\bigcap_{V\in \gamma} (\ominus V)\subseteq U$.
This implies that $\ominus\bigcap_{V\in\gamma}V=\bigcap_{V\in\gamma}(\ominus V)\subseteq U$.
Since $G$ is a $P_{\tau}$-space, it follows that $\bigcap_{V\in\gamma}V$ is an open set in $G$. Hence $G$ is a strongly topological gyrogroup.
\end{proof}

\begin{corollary}\label{the3.10}
Every strongly paratopological gyrogroup $G$ that is topologically periodic and qualifies as a
$P$-space is also a strongly topological gyrogroup.
\end{corollary}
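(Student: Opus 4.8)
The plan is to obtain the corollary as an immediate consequence of Theorem \ref{the3.9} together with Proposition \ref{pro3.4}, so the work reduces to lining up the two hypotheses correctly. First I would apply Proposition \ref{pro3.4}: since $G$ is a topologically periodic paratopological gyrogroup, it satisfies $wHs(G)\leq\omega$. Put $\tau=wHs(G)$, so that $\tau\leq\omega$.

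The next step is to verify that $G$ is a $P_{\tau}$-space. If $\tau$ is finite this is automatic, because in any topological space a finite intersection of open sets is open, so the defining condition of a $P_{\tau}$-space is vacuously true. If $\tau=\omega$, then a $P_{\tau}$-space is precisely a $P$-space, which is exactly the standing hypothesis on $G$. In either case $G$ is a $P_{\tau}$-space with $\tau=wHs(G)$.

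Finally I would invoke Theorem \ref{the3.9} with this value of $\tau$: a strongly paratopological gyrogroup that is a $P_{wHs(G)}$-space is a strongly topological gyrogroup. Hence $G$ is a strongly topological gyrogroup, as required.

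There is no real obstacle here; the only point requiring any care is the bookkeeping between the $P$-space hypothesis and the cardinal $wHs(G)$, and this is precisely what the bound $wHs(G)\leq\omega$ from Proposition \ref{pro3.4} supplies. All the genuine content has already been absorbed into Proposition \ref{pro3.4} (topological periodicity forces a countable weakly Hausdorff number, via the telescoping sequence $V_{i+1}\oplus V_{i+1}\subseteq V_i$ and the identity $(n\cdot x)\oplus((n-1)\cdot(\ominus x))=x$ from Theorem \ref{the4.10}) and into Theorem \ref{the3.9} (the $P_{\tau}$-space property makes $\bigcap_{V\in\gamma}V$ open, which yields continuity of inversion at $0$ and hence, by Lemma \ref{pro3.1}, a strongly topological gyrogroup).
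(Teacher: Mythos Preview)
Your proposal is correct and follows exactly the paper's approach: the paper's proof is the single sentence ``This assertion can be directly inferred from Proposition \ref{pro3.4} and Theorem \ref{the3.9},'' and you have simply spelled out the straightforward bookkeeping (including the harmless finite/infinite case split for $\tau=wHs(G)$) that makes this inference work.
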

\begin{proof}
This assertion can be directly inferred from Proposition \ref{pro3.4} and Theorem \ref{the3.9}.
\end{proof}

\begin{lemma}\cite{JX2}\label{lem3.11}
Let the neighborhood base
$\mathcal{U}$ at 0 of $G$ witness that $G$ is a strongly paratopological gyrogroup.
If $V\oplus V\subseteq U$ where $U, V\in \mathcal{U}$,
then $\ominus(\overline{\ominus V})\subseteq U$.
\end{lemma}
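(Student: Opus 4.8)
The plan is to fix an arbitrary $y\in\overline{\ominus V}$ and show that $\ominus y\in V\oplus V$; since $V\oplus V\subseteq U$ this gives $\ominus y\in U$, and as $y$ ranges over $\overline{\ominus V}$ we obtain $\ominus(\overline{\ominus V})\subseteq U$. The first step is to exhibit a neighborhood of $y$ that meets $\ominus V$. By Proposition \ref{pro2.14s} the right translation $R_y$ is a homeomorphism, so $V\oplus y=R_y(V)$ is a neighborhood of $R_y(0)=y$. Because $y\in\overline{\ominus V}$, this neighborhood meets $\ominus V$, so there exist $v_1,v_2\in V$ with $v_1\oplus y=\ominus v_2$.

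Next I would solve this relation for $y$ and then for $\ominus y$ using only the algebraic identities of Theorem \ref{the1.3}, since a paratopological gyrogroup has no continuous inversion to lean on. The left cancellation law (Theorem \ref{the1.3}(3)) applied to $v_1\oplus y=\ominus v_2$ gives $y=\ominus v_1\oplus(\ominus v_2)$. Applying Gyrosum Inversion (Theorem \ref{the1.3}(7)) together with Even symmetry (Theorem \ref{the1.3}(11)) then yields
\[ \ominus y=\ominus(\ominus v_1\oplus(\ominus v_2))=\text{gyr}[\ominus v_1,\ominus v_2](v_2\oplus v_1)=\text{gyr}[v_1,v_2](v_2\oplus v_1). \]
Since $\text{gyr}[v_1,v_2]$ is an automorphism of $(G,\oplus)$, we get $\ominus y=\text{gyr}[v_1,v_2](v_2)\oplus\text{gyr}[v_1,v_2](v_1)$.

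Finally I would invoke the hypothesis that the base $\mathcal{U}$ witnesses $G$ being a strongly paratopological gyrogroup: since $V\in\mathcal{U}$, we have $\text{gyr}[v_1,v_2](V)=V$, hence both $\text{gyr}[v_1,v_2](v_1)$ and $\text{gyr}[v_1,v_2](v_2)$ lie in $V$, so $\ominus y\in V\oplus V\subseteq U$. (Alternatively, using the left translate $y\oplus V$ as the neighborhood and the coaddition identities of Theorem \ref{the1.3}(4),(10) one obtains $\ominus y=v_1\boxplus v_2=v_1\oplus\text{gyr}[v_1,\ominus v_2](v_2)\in V\oplus V$ by the same strongly-paratopological property.) The step I expect to require the most care is the middle one: one must invert $v_1\oplus y=\ominus v_2$ purely gyro-algebraically and arrange the outcome so that the gyration acquired along the way acts on elements of $V$ — which is precisely where the \emph{strongly} hypothesis is used.
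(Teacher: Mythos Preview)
The paper does not supply its own proof of this lemma; it is quoted from \cite{JX2}. There is therefore nothing in the present paper to compare your argument against.

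Your argument is correct. Using Proposition~\ref{pro2.14s} to see that $V\oplus y=R_y(V)$ is a neighborhood of $y$, intersecting it with $\ominus V$, and then unwinding $v_1\oplus y=\ominus v_2$ via left cancellation, gyrosum inversion (Theorem~\ref{the1.3}(7)) and even symmetry (Theorem~\ref{the1.3}(11)) gives $\ominus y=\text{gyr}[v_1,v_2](v_2\oplus v_1)$; the strongly-paratopological hypothesis $\text{gyr}[v_1,v_2](V)=V$ then places $\ominus y$ in $V\oplus V\subseteq U$. The alternative you sketch with the left translate $y\oplus V$ and the coaddition identities (Definition~\ref{defbox}, Theorem~\ref{the1.3}(10)) is equally valid and arguably cleaner, since it lands directly on $\ominus y=v_1\boxplus v_2=v_1\oplus\text{gyr}[v_1,\ominus v_2](v_2)\in V\oplus V$ without needing the automorphism to distribute over a sum.
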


\begin{corollary}\label{the3.10}
Assume $G$ is a $T_1$ strongly paratopological gyrogroup and also a $P$-space that satisfies one of the following conditions.
Then $G$ is a strongly topological gyrogroup.
\begin{enumerate}
\item[(1)] $G$ is Lindel\"{o}f;
\item[(2)] there erists a nonempty open set $V$ in $G$ such that $\ominus V$ is Lindel\"{o}f.
\end{enumerate}
\end{corollary}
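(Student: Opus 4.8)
The plan is to reduce both alternatives to the single inequality $wHs(G)\le\omega$, and then to invoke Theorem \ref{the3.9}. More precisely, a $P$-space is by definition a $P_{\omega}$-space, hence a $P_{\tau}$-space for every $\tau\le\omega$; so once $wHs(G)\le\omega$ is known, $G$ is a strongly paratopological gyrogroup that is a $P_{wHs(G)}$-space, and Theorem \ref{the3.9} yields that $G$ is a strongly topological gyrogroup. This is the step where the $P$-space hypothesis enters. In case (1), $G$ Lindel\"{o}f means $l(G)\le\omega$, and Corollary \ref{cor3.7} (the inequality $wHs(G)\le l(G)$) gives $wHs(G)\le\omega$ at once, so the reduction applies.

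In case (2) the relevant estimate is Proposition \ref{pro3.4s}, namely $wHs(G)\le l(\overline{\ominus V})$ for every nonempty open $V$, so it suffices to verify $l(\overline{\ominus V})\le\omega$ starting from the assumption that $\ominus V$ is Lindel\"{o}f. I would obtain this by repeating the proof of Proposition \ref{pro3.4s} with $\ominus V$ in place of $\overline{\ominus V}$: fixing $U\in\mathcal{U}$, the set $\ominus V\cap(G\setminus U)$ is closed in the Lindel\"{o}f space $\ominus V$, hence Lindel\"{o}f, and it omits $0$ since $0\in U$; for each $x$ in it, joint continuity of $(w_1,w_2)\mapsto(x\oplus w_1)\oplus w_2$ together with the $T_1$ axiom furnishes $W_x\in\mathcal{U}$ with $0\notin(x\oplus W_x)\oplus W_x$, that is, $\ominus W_x\cap(x\oplus W_x)=\emptyset$; extracting a countable subcover of the open cover $\{x\oplus W_x:x\in\ominus V\cap(G\setminus U)\}$, indexed by $\{x_n:n\in\omega\}$, and — after reducing, as in the proof of Proposition \ref{pro3.4s}, to the case $V\in\mathcal{U}$ — setting $\gamma=\{V\}\cup\{W_{x_n}:n\in\omega\}$, one checks that $\gamma$ witnesses $wHs(G)\le\omega$: if $z\in\bigcap_{W\in\gamma}(\ominus W)$ and $z\notin U$, then $z\in\ominus V\cap(G\setminus U)$, so $z\in x_{n}\oplus W_{x_{n}}$ for some $n$, contradicting $z\in\ominus W_{x_{n}}$ and $\ominus W_{x_{n}}\cap(x_{n}\oplus W_{x_{n}})=\emptyset$. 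Hence $wHs(G)\le\omega$, and the reduction above completes the proof.

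The step I expect to be the main obstacle is precisely this reduction in case (2) to an open set $V$ which is a neighborhood of $0$ — needed so that $V$ itself may be adjoined to the witnessing family $\gamma$, whose members must be neighborhoods of the identity. Since inversion is not yet known to be continuous, $\ominus$ is not known to carry the Lindel\"{o}f property of $\ominus V$ along a translation that moves a point of $V$ to $0$, so this cannot be done naively. I would handle it via the homogeneity argument already used in the proof of Proposition \ref{pro3.4s}; an alternative worth attempting is to show directly that in a $P$-space the closure of a Lindel\"{o}f subspace is again Lindel\"{o}f, which would let one apply Proposition \ref{pro3.4s} to $\overline{\ominus V}$ verbatim.
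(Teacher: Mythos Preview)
Your proposal is correct and follows the same overall strategy as the paper: reduce both cases to $wHs(G)\le\omega$ and then invoke Theorem~\ref{the3.9}, using Corollary~\ref{cor3.7} for case~(1) and the estimate of Proposition~\ref{pro3.4s} for case~(2), with homogeneity supplying the reduction to $0\in V$.

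The only noteworthy difference is in how case~(2) is finished. You re-run the proof of Proposition~\ref{pro3.4s} with $\ominus V$ in place of $\overline{\ominus V}$, arguing that $\ominus V\cap(G\setminus U)$ is closed in the Lindel\"{o}f space $\ominus V$ and therefore admits a countable subcover. The paper avoids this repetition by invoking Lemma~\ref{lem3.11}: once $0\in V$, choose $W\in\mathcal{U}$ with $W\oplus W\subseteq V$; then $\overline{\ominus W}\subseteq\ominus V$, so $\overline{\ominus W}$ is a closed subset of the Lindel\"{o}f space $\ominus V$ and hence Lindel\"{o}f, and Proposition~\ref{pro3.4s} applies verbatim to $W$. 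This is shorter and keeps Proposition~\ref{pro3.4s} as a black box, whereas your route shows that the closure in that proposition is not essential when the Lindel\"{o}f hypothesis is already on $\ominus V$ itself. Your alternative suggestion (that closures of Lindel\"{o}f subspaces of $P$-spaces are Lindel\"{o}f) is unnecessary once Lemma~\ref{lem3.11} is available.
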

\begin{proof}
Based on Proposition \ref{pro3.4s} and Theorem \ref{the3.9}, it suffices to demonstrate the existence of an open neighborhood $W$ of 0 in $G$ for which $\overline{\ominus W}$ is a Lindel\"{o}f space.
For the homogeneity of $G$, we can also postulate that the open set $V$ includes the identity element 0.
Consequently, to satisfy the requirements, one may simply select an open neighborhood $W$ of 0 with the property that $W\oplus W\subseteq V$. From this, by Lemma \ref{lem3.11}, we infer that $\overline{\ominus W}\subseteq \ominus V$.
Since $\ominus V$ is Lindel$\ddot{o}$f, so is $\overline{\ominus W}$.
\end{proof}

Let $\mathcal{P}$ be a topological property and $G$ a paratopological gyrogroup.
We say that $G$ is {\it co-local} $\mathcal{P}$ (i.e., {\it conjugate-local} $\mathcal{P}$) if there exists a nonempty open set $V$ in $G$ such that $\overline{\ominus V}$ has the property $\mathcal{P}$.

\begin{theorem}\label{the3.13}
If $G$ is a strongly paratopological gyrogroup that is co-locally countably compact with $wHs(G)\leq\omega$, then $G$ is a strongly topological gyrogroup.
\end{theorem}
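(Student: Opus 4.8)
The plan is to use Lemma \ref{pro3.1} to reduce the statement to continuity of the inversion $\ominus(\cdot)$ at the identity $0$, and then to derive a contradiction from discontinuity by a cluster-point argument built on the countably compact set furnished by co-local countable compactness. So assume, for contradiction, that $G$ is not a strongly topological gyrogroup. By Lemma \ref{pro3.1}, $\ominus(\cdot)$ is not continuous at $0$, so there is an open neighbourhood $U$ of $0$ for which $\ominus U$ is not a neighbourhood of $0$; equivalently $0\in\overline{G\setminus\ominus U}$. Since $\ominus(\cdot)$ is an involution, this means that \emph{every} neighbourhood of $0$ contains a point $x$ with $\ominus x\notin U$. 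Fix a nonempty open set $V$ with $\overline{\ominus V}$ countably compact; by the homogeneity of $G$ we may assume $0\in V$.

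Next, using that $\mathcal{U}$ witnesses $G$ being strongly paratopological and that $wHs(G)\le\omega$, I would construct inductively a decreasing sequence $\{V_n:n\ge 1\}\subseteq\mathcal{U}$ of open neighbourhoods of $0$ such that (a) $V_1\subseteq V$; (b) $V_{n+1}\oplus V_{n+1}\subseteq V_n$ for all $n$; and (c) $\bigcap_{n\ge 1}(\ominus V_n)\subseteq U$. For (c) one first picks, by $wHs(G)\le\omega$, a countable family $\{W_k:k\in\omega\}$ of neighbourhoods of $0$ with $\bigcap_{k}(\ominus W_k)\subseteq U$, and then requires $V_n\subseteq W_{n-1}$ at each step; this is possible because $\mathcal{U}$ is a base at $0$ and, by Theorem \ref{the1}(1), after choosing $V_n$ one may shrink further so that $V_{n+1}\oplus V_{n+1}\subseteq V_n$ as well. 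From (a), each $\overline{\ominus V_n}$ is a closed subset of the countably compact set $\overline{\ominus V_1}\subseteq\overline{\ominus V}$, hence is itself countably compact. From (b) and Lemma \ref{lem3.11} we get $\ominus(\overline{\ominus V_{n+1}})\subseteq V_n$, and applying the involution $\ominus(\cdot)$ yields $\overline{\ominus V_{n+1}}\subseteq\ominus V_n$; combining this with (c) gives
\[
\bigcap_{n\ge 1}\overline{\ominus V_n}\ \subseteq\ \bigcap_{n\ge 1}(\ominus V_n)\ \subseteq\ U .
\]

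Now, for each $n$ choose $x_n\in V_n$ with $\ominus x_n\notin U$ (possible by the first paragraph, since $V_n$ is an open neighbourhood of $0$). Then $\ominus x_n\in\ominus V_n\subseteq\overline{\ominus V_1}$, so the sequence $(\ominus x_n)$ lies in the countably compact space $\overline{\ominus V_1}$ and therefore has a cluster point $p$. For each fixed $m\ge 1$ we have $\{\ominus x_n:n\ge m\}\subseteq\ominus V_m\subseteq\overline{\ominus V_m}$ and $\overline{\ominus V_m}$ is closed, hence $p\in\overline{\ominus V_m}$; thus $p\in\bigcap_{m\ge 1}\overline{\ominus V_m}\subseteq U$. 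But $U$ is open and $p$ is a cluster point of $(\ominus x_n)$, so $\ominus x_n\in U$ for infinitely many $n$, contradicting $\ominus x_n\notin U$ for every $n$. This contradiction shows that $\ominus(\cdot)$ is continuous at $0$, and Lemma \ref{pro3.1} then gives that $G$ is a strongly topological gyrogroup.

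The delicate step is the simultaneous bookkeeping in the inductive construction of $\{V_n\}$: condition (b) is precisely what lets Lemma \ref{lem3.11} collapse $\bigcap_n\overline{\ominus V_n}$ down to $\bigcap_n(\ominus V_n)$, condition (c) is where $wHs(G)\le\omega$ is used, and condition (a) is what transports countable compactness from the single set $\overline{\ominus V}$ to all the $\overline{\ominus V_n}$. Everything else is the routine fact that in a countably compact space every sequence has a cluster point lying in the intersection of the closures of its tails; one should also double-check the reduction to $0\in V$, which rests on homogeneity of $G$ rather than on any (unavailable) continuity of $\ominus(\cdot)$.
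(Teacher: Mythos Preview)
Your proof is correct and follows essentially the same approach as the paper's: both reduce to continuity of inversion at $0$ via Lemma~\ref{pro3.1}, build a decreasing sequence $\{V_n\}$ of neighbourhoods of $0$ satisfying $V_{n+1}\oplus V_{n+1}\subseteq V_n$ and $\bigcap_n(\ominus V_n)\subseteq U$ from $wHs(G)\le\omega$, collapse $\bigcap_n\overline{\ominus V_n}$ into $\bigcap_n(\ominus V_n)$ via Lemma~\ref{lem3.11}, and finish using countable compactness of (a closed subset of) $\overline{\ominus V}$. The only cosmetic difference is that the paper argues directly via the finite intersection property on $\overline{\ominus V}\setminus U$ to find a single index $i_0$ with $\ominus(W_{i_0}\cap V)\subseteq U$, whereas you argue by contradiction via a cluster point of the sequence $(\ominus x_n)$; the paper also invokes the same ``homogeneity'' reduction to $0\in V$ that you flagged.
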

\begin{proof}
Given that $G$ is a paratopological gyrogroup, the homogeneity of $G$ allows us to postulate the existence of an open neighborhood $V$ of the identity 0 in $G$, for which $\overline{\ominus V}$ is countable compact.

Consider an open neighborhood $U$ of the identity 0. Given $wHs(G)\leq\omega$, it is possible to identify a family $\{W_i:i\in\omega\}$ of open neighborhoods of 0, with the property $\bigcap_{i\in\omega}(\ominus W_i)\subseteq U$.
Additionally, it can be presumed that the family
$\{W_i:i\in\omega\}$ satisfies $W_{i+1}\oplus W_{i+1}\subseteq W_i$ for each $i\in\omega$.
Referring to Lemma \ref{lem3.11}, it can be established that $\bigcap_{i\in\omega}\overline{\ominus W_i}=\bigcap_{i\in\omega}(\ominus W_i)\subseteq U$.
Since $\overline{\ominus V}\setminus U$ is countably compact, we can find an index $i_0\in\omega$ such that $\ominus W_{i_0}\cap(\overline{\ominus V}\setminus U)=\emptyset$.
Define $W=W_{i_0}\cap V$. It is then readily apparent that $W$ includes 0 and it follows that $\ominus W=\ominus W_{i_0}\cap (\ominus V)\subseteq U$. This inclusion implies that $G$ is a topological gyrogroup according to Lemma \ref{pro3.1}.
\end{proof}

\begin{corollary}\label{the3.14}
Suppose $G$ is a strongly paratopological gyrogroup that satisfies one of the following conditions. In that case, $G$ is a strongly topological gyrogroup.
\begin{enumerate}
\item[(1)] $G$ is countably compact with $wHs(G) \leq\omega$;
\item[(2)] $G$ is a $T_1$-space with the existence of a nonempty open set $V$ such that $\ominus V$ is countably compact and $\psi(G) \leq \omega$;
\item[(3)] $G$ is Hausdorff countably compact and topologically periodic \cite[Theorem 4.12]{JX2};
\item[(4)] $G$ is topologically periodic with an existing nonempty open set $V$ where $\ominus V$ is countably compact;
\item[(5)] $G$ is Hausdorff compact \cite[Theorem 3.1]{JX2};
\item[(6)] $G$ is a Hausdorff space and possesses a nonempty open set $V$ such that $\ominus V$ is compact.
\end{enumerate}
\end{corollary}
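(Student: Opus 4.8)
The plan is to derive all six implications from Theorem \ref{the3.13}: in each case it suffices to verify that $G$ is co-locally countably compact and that $wHs(G)\leq\omega$, and then Theorem \ref{the3.13} (together with Lemma \ref{pro3.1}) shows that $G$ is a strongly topological gyrogroup.

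First I would settle the bound $wHs(G)\leq\omega$. In (1) it is part of the hypothesis. In (2) it follows from Proposition \ref{pro3.3}, since $wHs(G)\leq\psi(G)\leq\omega$. In (3) and (4) it follows from Proposition \ref{pro3.4}, because a topologically periodic paratopological gyrogroup has countable weakly Hausdorff number. In (5) a compact space is Lindel\"of, so $wHs(G)\leq l(G)\leq\omega$ by Corollary \ref{cor3.7}. In (6), since $G$ is Hausdorff and $\ominus V$ is compact, $\ominus V$ is closed, hence $\overline{\ominus V}=\ominus V$ is compact and in particular Lindel\"of, so $wHs(G)\leq l(\overline{\ominus V})\leq\omega$ by Proposition \ref{pro3.4s}.

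Next I would check co-local countable compactness. For (1), (3) and (5) one takes $V=G$; since $\ominus$ is a bijection, $\ominus G=G$, hence $\overline{\ominus G}=G$, which is countably compact (by hypothesis in (1) and (3), and because a compact space is countably compact in (5)). For (6), $\overline{\ominus V}=\ominus V$ is compact, hence countably compact. For (2) and (4) we are given a nonempty open $V$ with $\ominus V$ countably compact; arguing as in the proof of Theorem \ref{the3.13}, the homogeneity of $G$ lets us assume that $V$ is a neighbourhood of $0$, and then, choosing an open neighbourhood $W$ of $0$ with $W\oplus W\subseteq V$, Lemma \ref{lem3.11} gives $\overline{\ominus W}\subseteq\ominus V$; thus $\overline{\ominus W}$ is a closed subspace of the countably compact space $\ominus V$ and therefore countably compact, so $G$ is co-locally countably compact.

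Once both conditions hold in each case, Theorem \ref{the3.13} finishes the proof. I expect the only real obstacle to lie in (2) and (4): one must pass from an arbitrary nonempty open set $V$ with $\ominus V$ countably compact to a neighbourhood of $0$ with the same property, which is exactly the homogeneity step invoked inside the proof of Theorem \ref{the3.13}. One can sidestep the passage to closures entirely by observing that the argument of Theorem \ref{the3.13} applies verbatim with ``$\ominus V$ countably compact'' in place of ``$\overline{\ominus V}$ countably compact'', since only the closed subspace $\ominus V\cap(G\setminus U)$ of $\ominus V$ is used there. Note finally that items (3) and (5) recover \cite[Theorem 4.12]{JX2} and \cite[Theorem 3.1]{JX2}, and that the Hausdorff hypothesis is not actually needed in (3) and (5) for this argument.
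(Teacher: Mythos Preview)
Your proposal is correct and follows essentially the same route as the paper: both derive every item from Theorem \ref{the3.13} by checking $wHs(G)\leq\omega$ (via Propositions \ref{pro3.3}, \ref{pro3.4}, \ref{pro3.4s} or Corollary \ref{cor3.7}) and co-local countable compactness (via Lemma \ref{lem3.11} in the nontrivial cases). The only organizational difference is that the paper first records the trivial reductions $(3)\Rightarrow(4)$ and $(5)\Rightarrow(6)$ and then handles only (1), (2), (4), (6), whereas you treat all six items directly; also, your closing remark that Hausdorff is unnecessary in (5) should be tempered, since the cited bounds on $wHs$ (Corollary \ref{cor3.7} and Proposition \ref{pro3.4s}) still require at least the $T_1$ axiom.
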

\begin{proof}
It is readily apparent that that $(3)\Rightarrow(4)$ and $(5)\Rightarrow(6)$.

(1)The statement is a direct corollary of Theorem \ref{the3.13}.

(2)The result is a consequence of Proposition \ref{pro3.3}, Lemma \ref{lem3.11}, and Theorem \ref{the3.13}.

(4)The statement is a direct consequence of Proposition \ref{pro3.4}, Lemma \ref{lem3.11}, and Theorem \ref{the3.13}.

(6)The statement can be directly deduced from Proposition \ref{pro3.4s}, Lemma \ref{lem3.11}, and Theorem \ref{the3.13}.
\end{proof}

A sequence $\{U_n: n \in\omega\}$ of subsets of a space $X$ is decreasing if $U_n\supset U_{n+1}$ for each $n\in\omega$.
A semitopological gyrogroup $G$ is 2-{\it pseudocompact} if $\bigcap_{n\in\omega}\overline{\ominus U_n}\neq\emptyset$ for any
decreasing sequence $\{U_n: n \in\omega\}$ of nonempty open subsets of $G$.
Clearly, every countably compact paratopological gyrogroup is 2-pseudocompact. Hence, Theorem \ref{the3.15}
provides an enhancement to (1) of Corollary \ref{the3.14}.

\begin{theorem}\label{the3.15}
If $G$ is a 2-pseudocompact strongly paratopological gyrogroup, then $G$ is a strongly topological gyrogroup if and only if $wHs(G)\leq\omega$.
\end{theorem}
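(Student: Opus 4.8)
The plan is to treat the two implications separately, the forward one being immediate and the backward one the substance. If $G$ is a strongly topological gyrogroup, then inversion is continuous, so for every neighbourhood $U$ of $0$ there is a neighbourhood $V$ of $0$ with $\ominus V\subseteq U$; hence $wHs(G)=1\le\omega$, which is exactly the remark following Lemma \ref{pro3.1}.

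For the nontrivial direction, assume $G$ is $2$-pseudocompact, strongly paratopological with witnessing base $\mathcal U$, and $wHs(G)\le\omega$. By Lemma \ref{pro3.1} it suffices to prove that $x\mapsto\ominus x$ is continuous at $0$; suppose it is not. Since $\mathcal U$ is a base at $0$ we may fix $U\in\mathcal U$ with $\ominus W\not\subseteq U$ for \emph{every} open neighbourhood $W$ of $0$. Using the halving property of Theorem \ref{the1}(1) together with $wHs(G)\le\omega$, I would build a decreasing sequence $\{V_n\}_{n\in\omega}\subseteq\mathcal U$ with $V_0=U$, $V_{n+1}\oplus V_{n+1}\subseteq V_n$ and $\bigcap_n(\ominus V_n)\subseteq U$. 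By Lemma \ref{lem3.11} we then have $\overline{\ominus V_{n+1}}\subseteq\ominus V_n$, whence $\bigcap_n\overline{\ominus V_n}=\bigcap_n(\ominus V_n)\subseteq U$. For each $n$ choose $x_n\in V_n$ with $\ominus x_n\notin U$, which is possible by the choice of $U$.

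Next, form the ``fused'' open sets $O_n:=\bigcup_{m\ge n}(x_m\oplus V_{m+1})$. Each $O_n$ is open by Proposition \ref{pro23}, is nonempty (it contains $x_n$), the sequence is decreasing, and $O_n\subseteq V_{n-1}$ for $n\ge 1$ because $x_m\oplus V_{m+1}\subseteq V_m\oplus V_m\subseteq V_{m-1}\subseteq V_{n-1}$ whenever $m\ge n$. Applying $2$-pseudocompactness to $\{O_n\}$ produces $z\in\bigcap_n\overline{\ominus O_n}$, and since $\overline{\ominus O_n}\subseteq\overline{\ominus V_{n-1}}$ we get $z\in\bigcap_n\overline{\ominus V_n}\subseteq U$, so $z\in U$. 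To finish, I would use $z\in U$ (open) and $z\in\overline{\ominus O_n}$ to extract, for a suitably large $m$, the contradictory conclusion $\ominus x_m\in U$: fix $j$ so that $V_j\oplus z\subseteq U$ by continuity of $\oplus$, use $z\in\overline{\ominus O_{j+2}}$ to find $m\ge j+2$, $v\in V_{m+1}$ and $w\in V_{j+2}$ with $\ominus(x_m\oplus v)\in V_{j+2}\oplus z$, and then solve for $\ominus x_m$ via the gyrosum inversion law (Theorem \ref{the1.3}(7)), the cogyroautomorphic inverse identity (Theorem \ref{the1.3}(10)) and Proposition \ref{the3.22s}, absorbing every gyration with the fact that $\text{gyr}[a,b]$ fixes each member of $\mathcal U$ (strong paratopology), in particular $U$, together with Lemmas \ref{lem2.21} and \ref{lem19}.

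The main obstacle I anticipate is precisely this last computation. In the paratopological \emph{group} case (Ravsky; S\'anchez) the corresponding manipulation deposits $x_m^{-1}$ into $V_{j+1}z\subseteq U$ with $z$ a \emph{fixed} element, so the error term stays put. In a gyrogroup the right cancellation needed to isolate $\ominus x_m$ forces a cooperation $\boxplus$ and hence a gyration acting on $z$; keeping that gyrated error term confined to the $\mathcal U$-neighbourhoods — rather than to some uncontrolled translate of $U$ — is the heart of the argument, and is where the strong paratopology of $G$ and the fact that $z$ lies simultaneously in every $\ominus V_n$ must be exploited. Everything else is the standard fusion/$2$-pseudocompactness scheme.
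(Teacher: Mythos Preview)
Your approach diverges from the paper's and, as written, has a gap in exactly the place you flagged. The paper avoids picking individual bad points altogether. After fixing $V\in\mathcal U$ with $V\oplus V\subseteq U$ and a sequence $\{V_n\}\subseteq\mathcal U$ with $V_{n+1}\oplus V_{n+1}\subseteq V_n$ and $\bigcap_n(\ominus V_n)\subseteq V$, it splits into two cases. If some $V_m\subseteq\overline{\ominus V}$, then $V_m\subseteq(\ominus V)\ominus V\subseteq\ominus U$ (the first inclusion uses strong paratopology to rewrite $\boxminus$ as $\ominus$ inside $V$, the second is Lemma~\ref{lem19}), so $\ominus V_m\subseteq U$ and inversion is continuous at $0$. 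Otherwise the sets $V_n\setminus\overline{\ominus V}$ form a decreasing sequence of nonempty open sets, and one checks that $\bigcap_n\overline{\ominus(V_n\setminus\overline{\ominus V})}\subseteq\bigl(\bigcap_n\overline{\ominus V_n}\bigr)\setminus V=\emptyset$, contradicting $2$-pseudocompactness. No gyro-algebra on points is ever needed; the whole argument lives at the level of sets.

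In your fusion scheme the sequence $\{V_n\}$ is too weak. From $\bigcap_n(\ominus V_n)\subseteq U$ alone you cannot ``fix $j$ so that $V_j\oplus z\subseteq U$'': the $V_j$ are a specific sequence, not a base, so continuity of $\oplus$ only hands you $W\oplus z\subseteq U$ for \emph{some} open $W\ni 0$. More seriously, carrying out the algebra you sketch gives $\ominus x_m=(v\oplus w')\oplus z''$ where $z''$ is a gyration of $z$; strong paratopology does guarantee $z''\in\bigcap_n(\ominus V_n)$ (each $\ominus V_n$ is gyration-invariant), but that intersection is only known to sit inside $U$, not inside any $V_j$, so you cannot conclude $\ominus x_m\in U$. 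Both issues vanish if you strengthen the construction to arrange $\bigcap_n(\ominus V_n)\subseteq V_m$ for \emph{every} $m$ --- easily done by diagonalizing the countable $wHs$-witnesses for each $V_m$ as you build the sequence. Then $z''\in V_{j+1}$ and $v\oplus w'\in V_{j+1}$, whence $\ominus x_m\in V_{j+1}\oplus V_{j+1}\subseteq V_j\subseteq U$, the desired contradiction. The paper's set-level dichotomy simply sidesteps this entire computation.
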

\begin{proof}
If $G$ is a topological gyrogroup, it follows that $wHs(G)=1$.

To establish the sufficiency part, consider an open neighborhood base $\mathcal{U}$ at the identity element 0 in $G$, demonstrating that $G$ satisfies the properties of a strongly paratopological gyrogroup.
Take $U\in \mathcal{U}$. Since
the operator $\oplus$ in $G$ is continuous, there exists $V \in \mathcal{U}$ such that $V\oplus V\subseteq U$.
Assuming the hypothesis $wHs(G)\leq\omega$, we can select a sequence \(\{V_n : n \in \omega\} \subseteq \mathcal{U}\) such that $\bigcap_{n\in\omega}(\ominus V_n)\subseteq V$. It can be assumed that for each $n\in\omega$, the relation $V_{n+1}\oplus V_{n+1}\subseteq V_{n}$ holds.
It follows that $\overline{\ominus V_{n+1}} \subseteq\ominus V_n$ for every $n\in\omega$ by Lemma \ref{lem3.11}
and, hence, $\bigcap_{n\in\omega}\overline{\ominus V_{n}}=\bigcap_{n\in\omega}(\ominus V_{n})$.
If $V_m \subseteq \overline{\ominus V}$ for some $m \in \omega$, it follows from Lemma \ref{lem19} that $V_m \subseteq \overline{\ominus V} \subseteq \ominus V\ominus V \subseteq \ominus U$.
Hence, we conclude that $\ominus V_m\subseteq U$, completing this part of the argument. Now, consider the case where $V_n\setminus \overline{\ominus V}\neq\emptyset$ for each $n\in\omega$.
Then we have
$$\bigcap_{n\in\omega}\overline{\ominus V_n\setminus(\ominus(\overline{\ominus V}))}\subseteq \bigcap_{n\in\omega}\overline{\ominus V_n\setminus V}\subseteq(\bigcap_{n\in\omega}\overline{\ominus V_n})\setminus V
=(\bigcap_{n\in\omega}(\ominus V_n))\setminus V=\emptyset.$$
This contradicts the 2-pseudocompactness of $G$, leading to the conclusion that $G$ is a strongly topological gyrogroup.
\end{proof}

\begin{corollary}
Let $G=\prod_{i\in I}G_i$ be the product of Lindel\"{o}f $T_1$ strongly paratopological gyrogroups. If $K$ is a 2-pseudocompact subgyrogroup of $G$, then $K$ is a strongly topological gyrogroup.
\end{corollary}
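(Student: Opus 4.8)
The plan is to deduce this from Theorem~\ref{the3.15}: I will check that $K$ is a $2$-pseudocompact strongly paratopological gyrogroup with $wHs(K)\leq\omega$, whence Theorem~\ref{the3.15} forces $K$ to be a strongly topological gyrogroup. Two-pseudocompactness of $K$ is part of the hypothesis, so the work lies in the estimate $wHs(K)\leq\omega$ and in verifying that $K$ inherits the strongly-paratopological structure.

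First I would bound the weakly Hausdorff number. Each factor $G_i$ is a Lindel\"of $T_1$ paratopological gyrogroup, so $l(G_i)\leq\omega$ and hence $wHs(G_i)\leq l(G_i)\leq\omega$ by Corollary~\ref{cor3.7}. Since $G=\prod_{i\in I}G_i$ is a topological product of $T_1$ paratopological gyrogroups with $wHs(G_i)\leq\omega$ for every $i\in I$, Proposition~\ref{pro3.3s} gives $wHs(G)\leq\omega$ (and $G$ is itself $T_1$, being a product of $T_1$ spaces). Finally $K$ is a subgyrogroup of the $T_1$ paratopological gyrogroup $G$, so Proposition~\ref{pro3.2s} yields $wHs(K)\leq wHs(G)\leq\omega$.

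Next I would check that $K$ is a strongly paratopological gyrogroup. The product $G$ carries the coordinatewise gyrogroup operation, and its gyration is coordinatewise, $\text{gyr}[x,y]=(\text{gyr}[x_i,y_i])_{i\in I}$. Choosing for each $G_i$ a neighbourhood base $\mathcal{U}_i$ at $0$ witnessing that $G_i$ is strongly paratopological, the canonical basic neighbourhoods of $0$ in $G$ of the form $W=\prod_{i\in I}U_i$, with $U_i\in\mathcal{U}_i$ for $i$ in a finite set and $U_i=G_i$ otherwise, satisfy $\text{gyr}[x,y](W)=\prod_{i\in I}\text{gyr}[x_i,y_i](U_i)=W$, using $\text{gyr}[x_i,y_i](U_i)=U_i$ on the finite support and that $\text{gyr}[x_i,y_i]$ is an automorphism of $G_i$ elsewhere; hence $G$ is strongly paratopological. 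Passing to $K$, the family $\{W\cap K\}$ of such $W$'s is a neighbourhood base at $0$ in $K$, and for $a,b\in K$ the restriction of $\text{gyr}[a,b]$ to $K$ is an automorphism of $K$, so $\text{gyr}[a,b](W\cap K)=\text{gyr}[a,b](W)\cap\text{gyr}[a,b](K)=W\cap K$; thus $K$ is a strongly paratopological gyrogroup.

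Having produced a $2$-pseudocompact strongly paratopological gyrogroup $K$ with $wHs(K)\leq\omega$, I would invoke Theorem~\ref{the3.15} to conclude that $K$ is a strongly topological gyrogroup, which finishes the proof. The only step that is not pure bookkeeping is the verification in the third paragraph that the strongly-paratopological structure descends from the factors to the product and then to the subgyrogroup; I expect this to be the main (though routine) obstacle, and it rests on the coordinatewise description of the gyration in a product gyrogroup together with the fact that a subgyrogroup is invariant under the ambient gyrations.
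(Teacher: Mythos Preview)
Your proposal is correct and follows essentially the same route as the paper's own proof: bound $wHs(K)\leq\omega$ via the chain Corollary~\ref{cor3.7} $\Rightarrow$ Proposition~\ref{pro3.3s} $\Rightarrow$ Proposition~\ref{pro3.2s}, and then invoke Theorem~\ref{the3.15}. Your explicit verification that the strongly-paratopological structure passes to products and to subgyrogroups is a detail the paper leaves implicit, so your argument is if anything more complete.
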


\begin{proof}
Propositions \ref{pro3.2s}, \ref{pro3.3s}, and Corollary \ref{cor3.7} establish that $wHs(K)\leq\omega$. Given that $K$ is 2-pseudocompact, the application of Theorem \ref{the3.15} confirms that $K$ is a strongly topological gyrogroup.
\end{proof}

\begin{corollary}
Each 2-pseudocompact strongly paratopological gyrogroup $G$ with $\psi(G)\leq\omega$ is a strongly topological gyrogroup.
\end{corollary}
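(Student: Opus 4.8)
The plan is to obtain the statement as an immediate consequence of Theorem \ref{the3.15} once the hypothesis $wHs(G)\leq\omega$ has been extracted from $\psi(G)\leq\omega$. The key observation is that the assumption $\psi(G)\leq\omega$ already presupposes that $G$ is $T_1$ (the pseudocharacter being the least cardinality of a family of open sets whose intersection is a single point, which requires points to be $G_\delta$ and in particular closed). Hence Proposition \ref{pro3.3} is applicable and yields $wHs(G)\leq\psi(G)\leq\omega$.

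With $wHs(G)\leq\omega$ in hand, the hypotheses of Theorem \ref{the3.15} are all satisfied: $G$ is a $2$-pseudocompact strongly paratopological gyrogroup whose weakly Hausdorff number is countable. Applying the "if" direction of Theorem \ref{the3.15} then gives at once that $G$ is a strongly topological gyrogroup, which completes the argument.

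There is no genuine obstacle here: the corollary is a one-line deduction combining Proposition \ref{pro3.3} with Theorem \ref{the3.15}, in exact parallel with the earlier corollary on $2$-pseudocompact subgyrogroups of products of Lindel\"of $T_1$ strongly paratopological gyrogroups (where $wHs(K)\leq\omega$ was instead obtained from Propositions \ref{pro3.2s}, \ref{pro3.3s} and Corollary \ref{cor3.7}). The only point worth making explicit in the write-up is that a countable pseudocharacter entails $T_1$-ness, so that $wHs(G)$ is defined and the inequality $wHs(G)\leq\psi(G)$ is legitimate; after that, Theorem \ref{the3.15} does all the work.
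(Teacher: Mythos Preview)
Your proposal is correct and matches the paper's own proof, which simply states that the result follows directly from Proposition~\ref{pro3.3} and Theorem~\ref{the3.15}. Your additional remark that $\psi(G)\leq\omega$ entails the $T_1$ property needed to invoke Proposition~\ref{pro3.3} is a welcome clarification, but the overall approach is identical.
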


\begin{proof}
This directly follows from Proposition \ref{pro3.3} and Theorem \ref{the3.15}.
\end{proof}

\section{Acknowledgments}
We are immensely thankful to the reviewers for their extensive feedback and suggestions on our paper, and for their dedicated efforts to improve its overall quality.
\vskip0.9cm

\end{document}